\newcommand{\auth}[0]{Tobias Fritz and Paolo Perrone}
\newcommand{\tit}[0]{Bimonoidal Structure of Probability Monads}
\newcommand{\kw}[0]{Monoidal monads, probabilistic powerdomain, strong monads, randomness, stochastic correlation, Kantorovich duality, Wasserstein distance.}
\numberwithin{equation}{section}
\theoremstyle{plain}
\newtheorem{thm}{Theorem}[section]
\newtheorem{prop}[thm]{Proposition}
\newtheorem{cor}[thm]{Corollary}
\newtheorem{deph}[thm]{Definition}
\theoremstyle{definition}
\newcommand{\R}{\mathbb{R}}
\newcommand{\cat}[1]{{\mathsf{#1}}} 
\newcommand{\ar}[2][]{\arrow{#2}{#1}}
\newcommand{\uni}[2][]{\arrow[dashrightarrow]{#2}{#1}} 
\newcommand{\idar}[2][]{\arrow[equal]{#2}{#1}} 
\newcommand{\id}{\mathrm{id}} 
\let\originalleft\left
\let\originalright\right
\renewcommand{\left}{\mathopen{}\mathclose\bgroup\originalleft}
\renewcommand{\right}{\aftergroup\egroup\originalright}
\title{Bimonoidal Structure of\\Probability Monads}
\author[1]{Tobias Fritz\thanks{tfritz [at] pitp.ca}}
\affil[1]{\small Perimeter Institute for Theoretical Physics, Waterloo, ON (Canada)}
\author[2]{Paolo Perrone\thanks{pperrone [at] mit.edu}}
\affil[2]{Massachusetts Institute of Technology, Cambridge, MA (U.S.A.)}
\date{August 2018}
\begin{document}

\maketitle

\begin{abstract} 
\addcontentsline{toc}{section}{Abstract}

 We give a conceptual treatment of the notion of joints, marginals, and independence in the setting of categorical probability. This is achieved by endowing the usual probability monads (like the Giry monad) with a monoidal and an opmonoidal structure, mutually compatible (i.e.\ a bimonoidal structure). If the underlying monoidal category is cartesian monoidal, a bimonoidal structure is given uniquely by a commutative strength. However, if the underlying monoidal category is not cartesian monoidal, a strength is not enough to guarantee all the desired properties of joints and marginals. A bimonoidal structure is then the correct requirement for the more general case. 
 
 We explain the theory and the operational interpretation, with the help of the graphical calculus for monoidal categories. We give a definition of stochastic independence based on the bimonoidal structure, compatible with the intuition and with other approaches in the literature for cartesian monoidal categories. We then show as an example that the Kantorovich monad on the category of complete metric spaces is a bimonoidal monad for a non-cartesian monoidal structure.  
  
\end{abstract}

\newpage

\section{Introduction}\label{intro}

The standard way to treat randomness categorically is via a \emph{probability monad}, of which classic examples are the Giry monad \cite{giry} and the probabilistic powerdomain \cite{jones-plotkin}.
The interpretation is the following: let $\cat{C}$ be a category whose objects we think of as spaces of possible values that a variable may assume. A probability monad $P$ on $\cat{C}$ makes it possible to talk about random variables on objects $X\in\cat{C}$, or equivalently random elements of $X$: an element $p\in PX$ specifies the \emph{law} of a random variable on $X$. 

A central theme of probability theory is that random variables can form joints and marginals. For this to make sense in $\cat{C}$, we need $\cat{C}$ to be a monoidal category, and we need $P$ to interact well with the monoidal structure. We argue that this interaction is best modelled in terms of a bimonoidal structure.

A first structure which links a monad with the tensor product in a category is that of a \emph{strength}. A strength for a probability monad is a natural map $X\otimes PY\to P(X\otimes Y)$, whose interpretation is the following: an element of $X$ and a random element of $Y$ determine uniquely a random element of $X\otimes Y$ which has the correct marginals, and whose randomness is all in the $Y$ component. In the language of probability theory, $(x,q)\in X\otimes PY$ defines the product distribution of $\delta_x$ and $q$ on $X\otimes Y$. 
In the literature, the operational meaning of a strength for a monad, which includes the usage in probability, is well explained in \cite{comp-monads}, and in \cite{jones-plotkin} for the case of the probabilistic powerdomain. A compendium of probability monads appearing in the literature, with information about their strength, can be found in \cite{jacobs}. 

The monoidal structure can be thought of as a refinement of the idea of strength. The basic idea is that given two probability measures $p\in PX$ and $q\in PY$, one can canonically define a probability measure $p\otimes_\nabla q\in P(X\otimes Y)$, the ``product distribution''\footnote{Our reason for denoting it by $p\otimes_\nabla q$ rather than by $p\otimes q$ is that we want to interpret $p : 1\to PX$ and $q : 1\to PY$ as morphisms, so that $p\otimes q : 1\otimes 1 \to PX\otimes PY$ is not yet the product distribution. Rather, one needs to compose $p\otimes q$ with the monoidal structure $\nabla : PX\otimes PY \to P(X\otimes Y)$, which is the subject of the present paper, see Section \ref{probc}.}. This is not the only possible joint distribution that $p$ and $q$ have, but it can be obtained without additional knowledge (of their correlation). 
When a strength satisfies suitable symmetry conditions (commutative strength) it defines automatically a monoidal structure \cite{kock,logrelations}. 

An opmonoidal structure formalizes the dual intuition, namely that given a joint probability distribution $r\in P(X\otimes Y)$ we canonically have the marginals on $PX$ and $PY$ as well. A bimonoidal structure is a compatible way of combining the two structures, in a way consistent with the usual properties of products and marginals in probability.
When the underlying category is cartesian monoidal, then $P$ is automatically opmonoidal. In this case, we show that if $P$ carries a monoidal structure, then it is automatically bimonoidal. Therefore a commutative strong monad on a cartesian monoidal category is canonically bimonoidal. This is for example the case of the probabilistic powerdomain \cite{jones-plotkin}. We argue that the bimonoidal structure is the structure of relevance for probability theory: if the underlying category is not cartesian monoidal, or the strength is not commutative, then one cannot talk about joints and marginals in the usual way just by having a strong monad.

However, not every probability monad in the literature is bimonoidal, not even strong; a famous counterexample is in \cite{sato}. While a non-bimonoidal probability monad could be of use in measure theory to talk about spaces of measures, it would be far from applications to probability, since it would not permit talking about concepts like stochastic independence and correlation, which in probability theory play a central role. We thus want to argue that in order for a monad to \emph{really} count as a probability monad, it should be a bimonoidal monad.

In Section \ref{semicart} we describe the setting of semicartesian monoidal categories and affine monads, which we argue is the one of relevance for classical probability theory. In such a setting, we will represent the concepts using a graphical calculus analogous to that of \cite{mellies}, presented in \ref{graphicalcalculus}.
In Section \ref{pmon} we will sketch the basic theory and interpretation of a bimonoidal structure for probability monads, using the graphical calculus. The same definitions in terms of commutative diagrams can be found in Appendix \ref{monoidalstuff}. 
In \ref{rv}, we will show how this permits to talk about functions between products of random variables. 
In \ref{probc}, we show how to define a category of probability spaces from a probability monad, in such a way that the monoidal structure is inherited. This permits to connect with other treatments of stochastic independence in the literature. 
In \ref{strength} we will see in more detail why this formalism generalizes the strength of probability monads on cartesian monoidal categories. 
In Section \ref{independence}, we give a notion of stochastic independence based on the bimonoidal structure of the monad, and show that it satisfies some of the intuitively expected properties. In \ref{comparison} we show that, if the base category is cartesian monoidal, our definition agrees with the one given by Franz \cite{franz}, and it is compatible with the definition of independence structure given by Simpson \cite{simpson-independence}.
Finally, in Section \ref{bmp} we will give a nontrivial example of a bimonoidal monad, the Kantorovich monad on complete metric spaces \cite{breugel,ours_kantorovich}. The precise proofs and calculations of the statements of Section \ref{bmp} can be found in Appendix \ref{proofs}.

\section{Semicartesian monoidal categories and affine monads}\label{semicart}

By definition, a \emph{semicartesian monoidal category} is a monoidal category in which the monoidal unit $1$ is a terminal object. For probability theory, this is a very appealing feature of a category, because such an object can be interpreted as a trivial space, having only one possible state. In other words, the object $1$ would have the property that for every object $X$, $X\otimes 1 \cong X$ (monoidal unit), so that tensoring with $1$ does not increase the number of possible states, and moreover there is a unique map $!:X\to 1$ (terminal object), which we can think of as ``forgetting the state of $X$''.
Cartesian monoidal categories are in particular semicartesian. Not every monoidal category of interest in probability theory is cartesian, but most of them are semicartesian (in particular, all the ones listed in \cite{jacobs}). 

Semicartesian monoidal categories have another appealing feature for probability: every tensor product space comes equipped with natural projections onto its factors:
\begin{equation*}
 \begin{tikzcd}[row sep=tiny]
  X\otimes Y \ar{r}{\id \otimes !} & X\otimes 1 \ar{r}{\cong} & X, \\
  X\otimes Y \ar{r}{! \otimes \id} & 1\otimes Y \ar{r}{\cong} & Y,
 \end{tikzcd}
\end{equation*}
which satisfy the universal property of the product projections if and only if the category is cartesian monoidal. 
These maps are important in probability theory, because they give the \emph{marginals}. 
Since these projections are automatically natural in $X$ and $Y$, a semicartesian monoidal category is always a \emph{tensor category with projections} in the sense of \cite[Definition 3.3]{franz}; see~\cite{leinster_semicartesian} for more background.\footnote{Conversely, a tensor category equipped with natural projections is semicartesian whenever the projection maps $X\otimes 1\to X$ and $1\otimes X\to X$ coincide with the unitors for all objects $X$. See for example (the dual statement to) \cite[Theorem~3.5]{catlevy}.}

Suppose now that $P$ is a probability monad\footnote{In this work, ``probability monad'' is not a technical term: any monad could be in principle considered a probability monad. We merely use this term in order to indicate our intended interpretation in terms of randomness, as in the case of the Giry monad or the probabilistic powerdomain.} on a semicartesian monoidal category $\cat{C}$. Since we can interpret the unit $1$ has having only one possible (deterministic) state, it is tempting to say that just as well there should be only one possible random state: if there is only one possible outcome, then there is no real randomness. In other words, it is appealing to require that $P(1)\cong 1$. A monad with this condition is called \emph{affine}. Most monads of interest for probability are indeed affine (in particular, again, all the ones listed in \cite{jacobs}). 

Unless otherwise stated, we will always work in a symmetric semicartesian monoidal category with an affine probability monad. These conditions simplify the treatment a lot, while keeping most other conceptual aspects interesting. By the remarks above, they seem to be the right framework for classical probability theory. The definition of monoidal, opmonoidal, and bimonoidal monads can however be given for general braided monoidal categories: the interested reader can find them in Appendix \ref{monoidalstuff}.

\subsection{Graphical calculus}\label{graphicalcalculus}

Here we introduce a form of graphical calculus specializing that of Melliès \cite{mellies} to our setting. Let $\cat{C}$ be a strict symmetric semicartesian monoidal category, and $P$ an affine monad.
We can represent objects $X$ as vertical lines, and morphisms $f:X\to Y$ as boxes:
\begin{equation*}
 \includegraphics[align=c,scale=0.35,keepaspectratio=true]{.//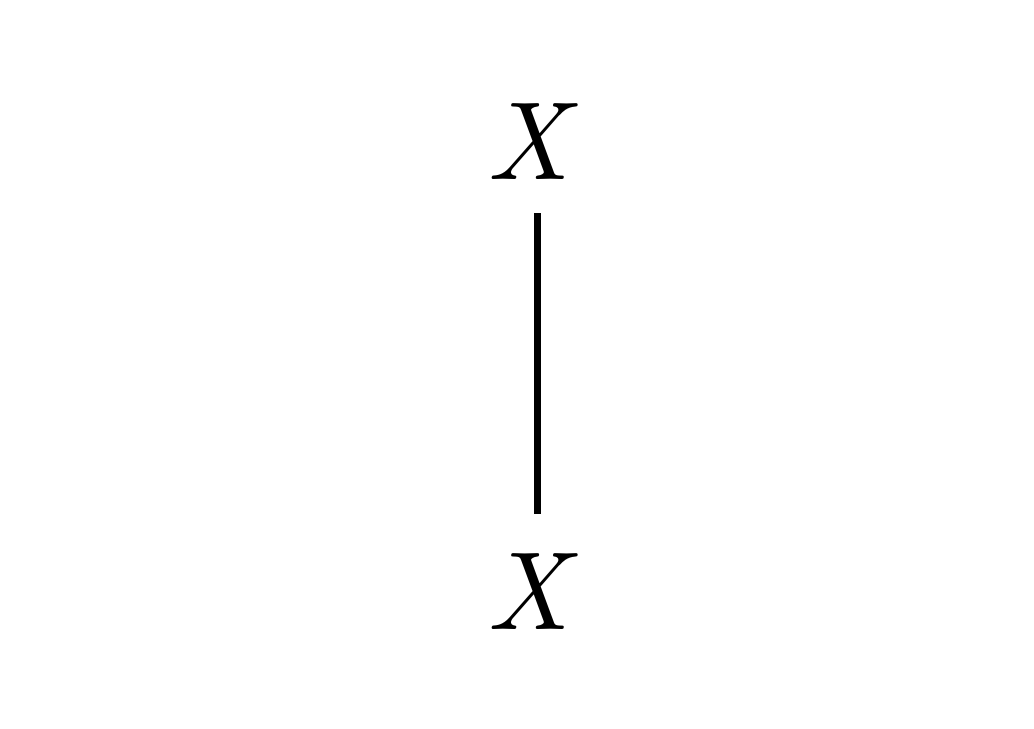}
 \quad\mbox{and}\quad
 \includegraphics[align=c,scale=0.35,keepaspectratio=true]{.//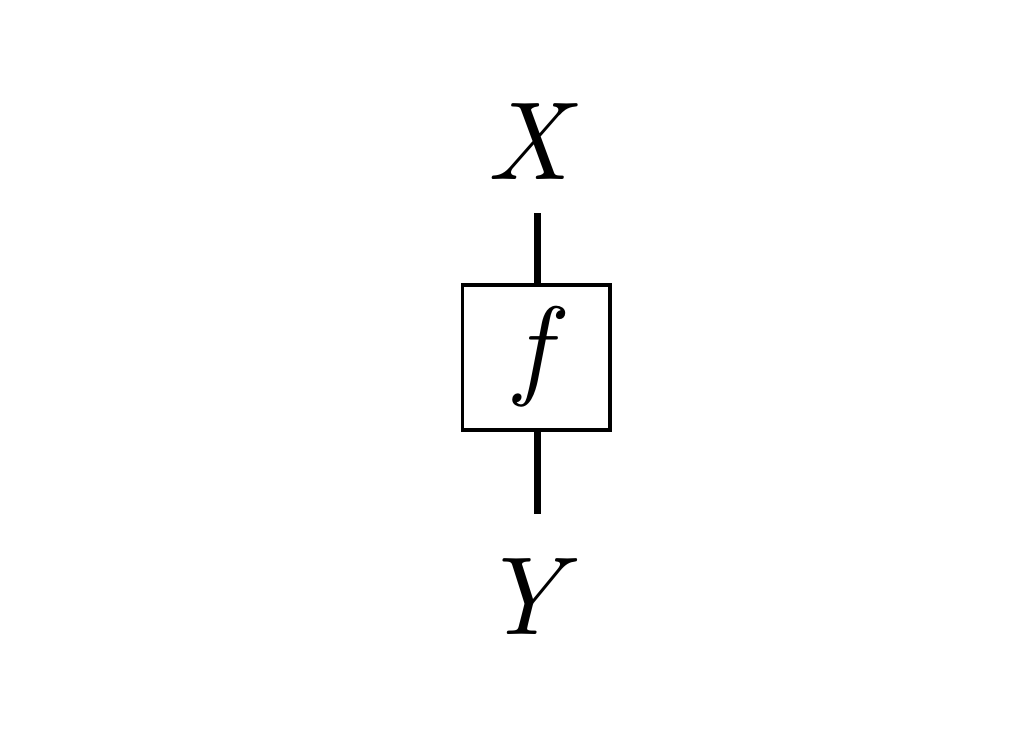}
\end{equation*}
which we read from top to bottom. 

Functor applications are represented by shadings. For example the image $PX$ of $X$ under a functor $P$ and the functor image $Pf:PX\to PY$ of $f$ are:
\begin{equation*}
 \includegraphics[align=c,scale=0.35,keepaspectratio=true]{.//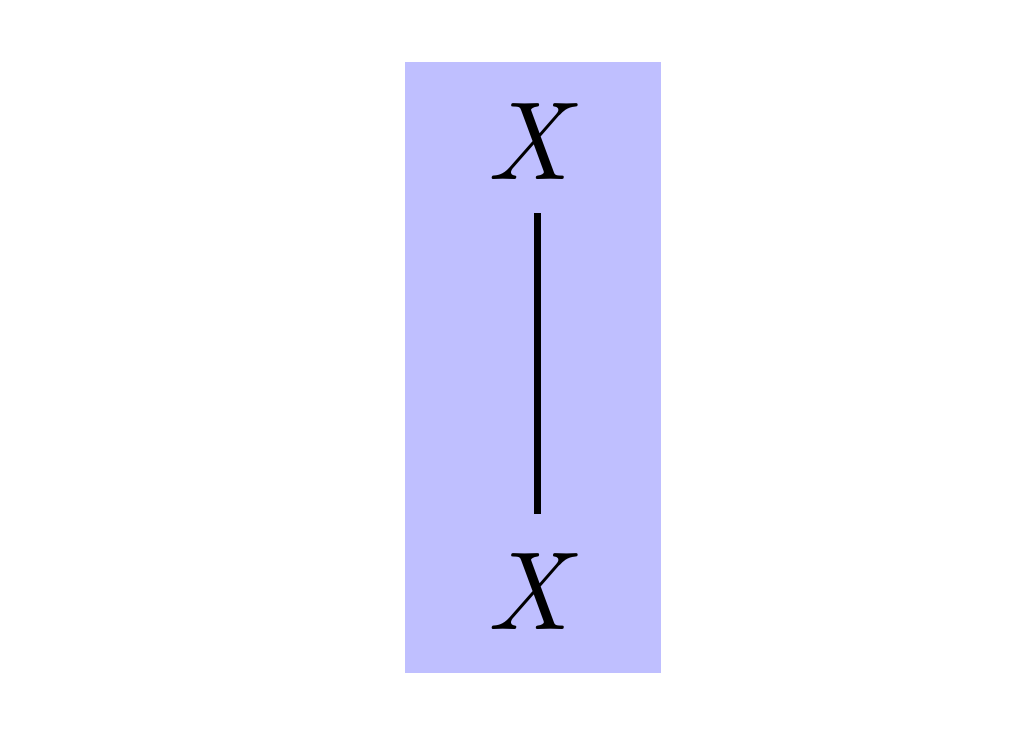}
 \quad\mbox{and}\quad
 \includegraphics[align=c,scale=0.35,keepaspectratio=true]{.//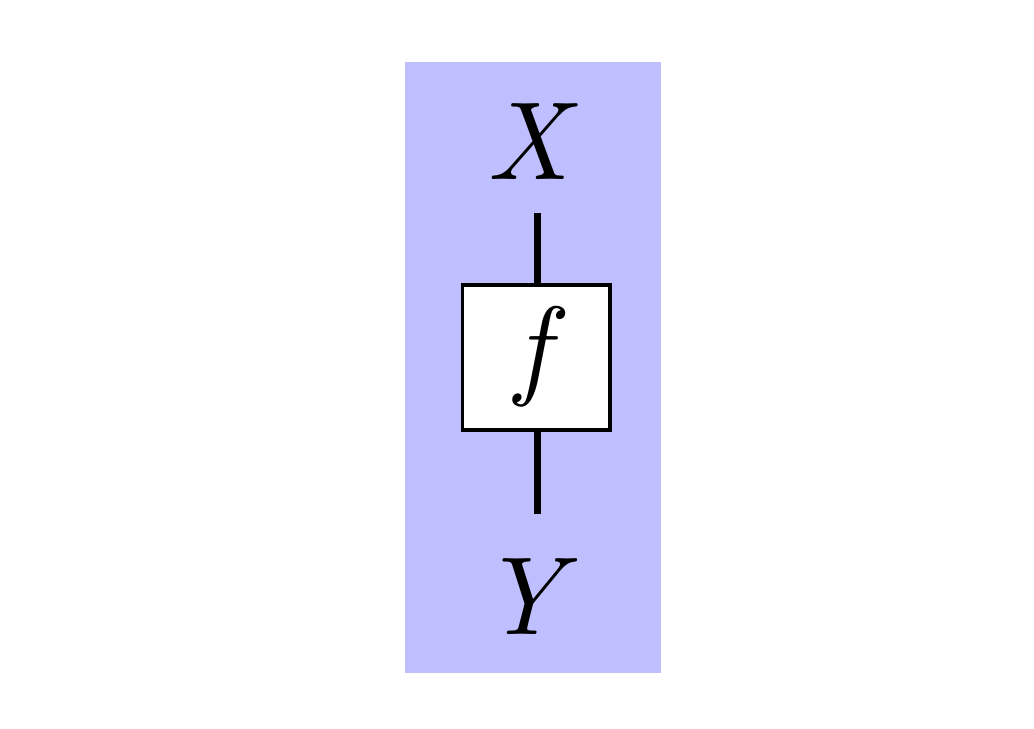}
\end{equation*}

We can represent monoidal products by horizontal juxtaposition. For example, the map $f\otimes g: X\otimes A \to Y\otimes B$ can be represented as: 
\begin{equation*}
 \includegraphics[align=c,scale=0.35,keepaspectratio=true]{.//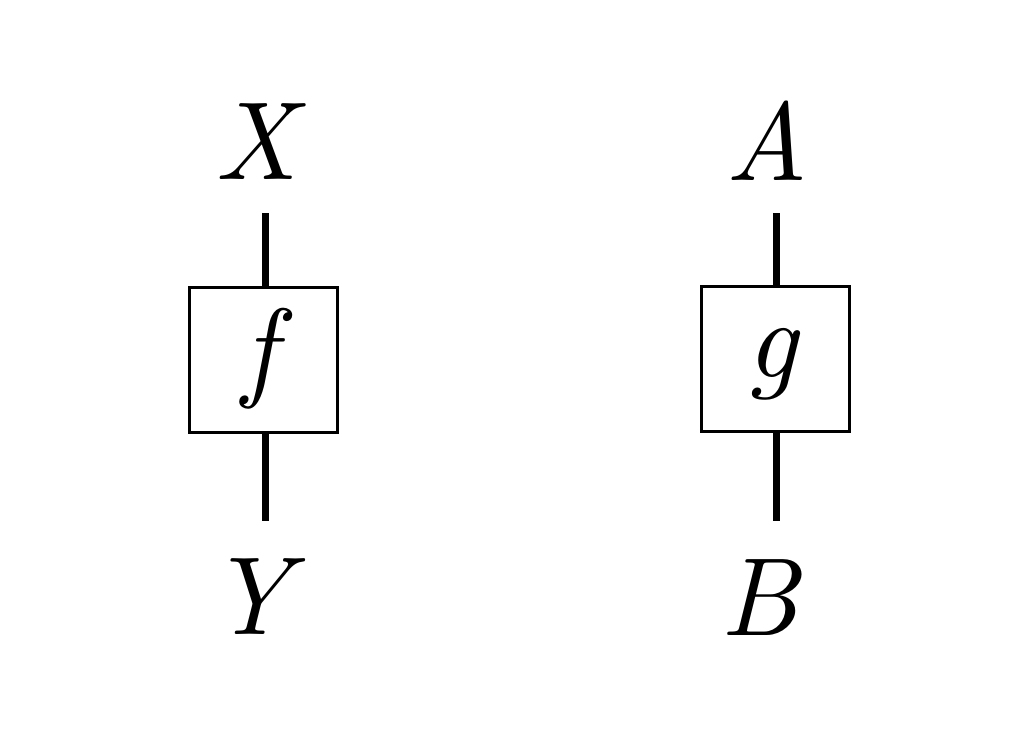}
\end{equation*}
The monoidal unit $1$ is better represented by \emph{nothing}, so that expressions like $X\otimes 1\cong 1\otimes X \cong X$ all have the same representation. However sometimes it is helpful to keep track of it, and in those cases we will draw it as a dotted line:
\begin{equation*}
 \includegraphics[align=c,scale=0.35,keepaspectratio=true]{.//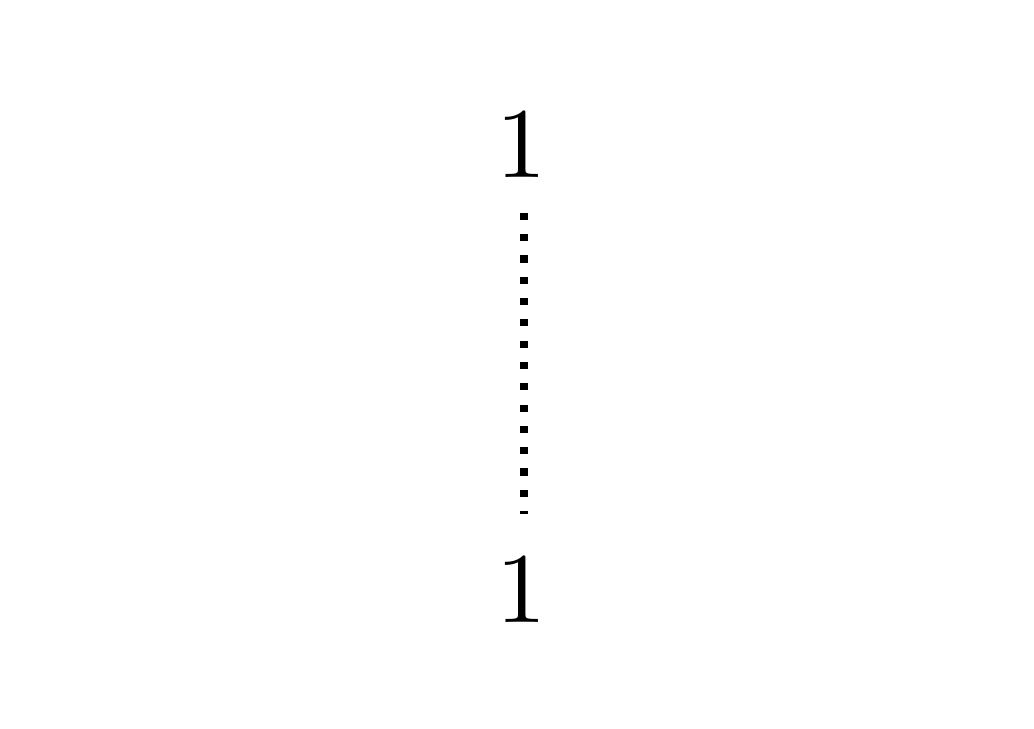}
\end{equation*}
For every object $X$ there is a unique map $!:X\to 1$, which we can interpret as ``forgetting the state of $X$''. We will represent such a map as a ``ground wire'', following the literature on quantum systems:
\begin{equation*}
 \includegraphics[align=c,scale=0.35,keepaspectratio=true]{.//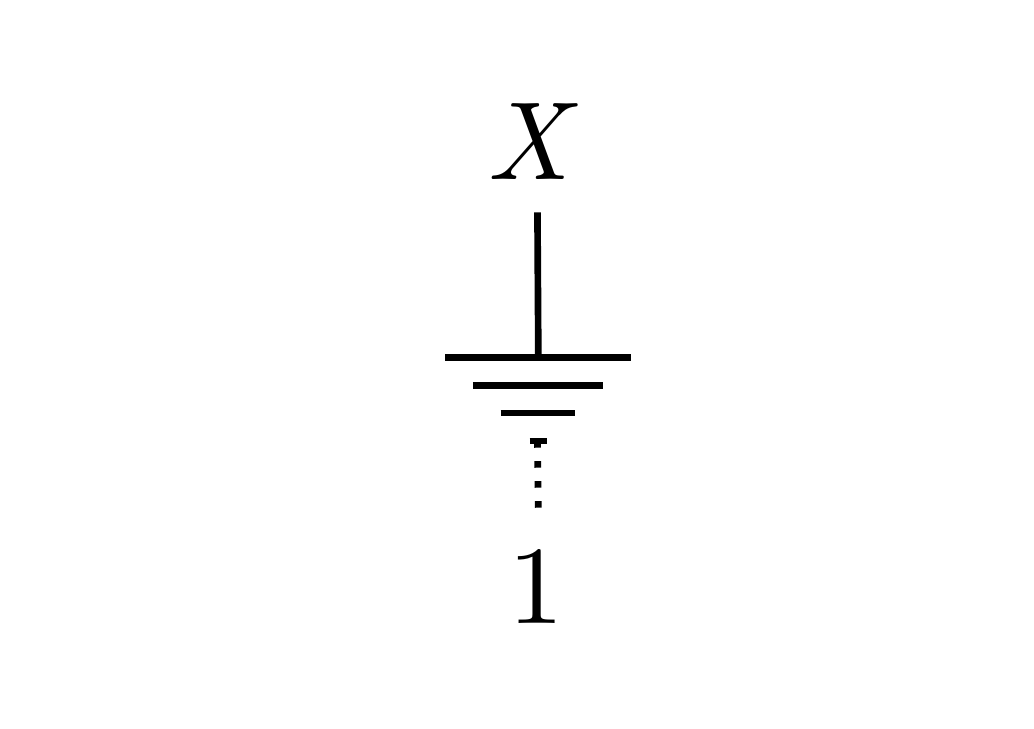}
 \quad\mbox{or, omitting the unit, simply:}\quad
 \includegraphics[align=c,scale=0.35,keepaspectratio=true]{.//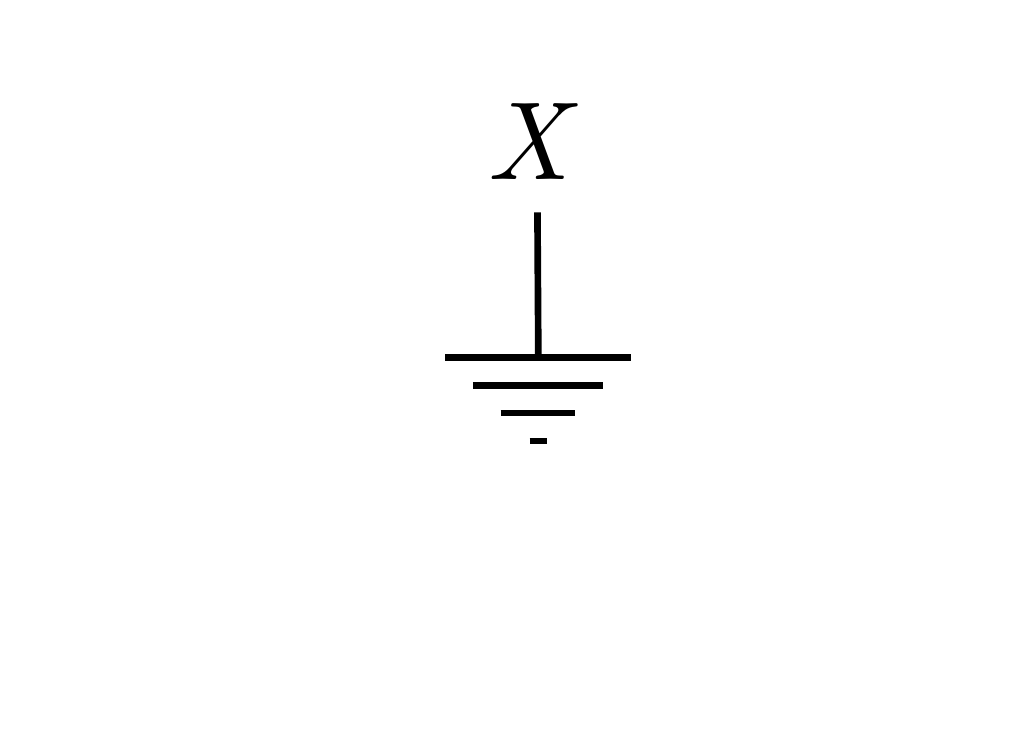}
\end{equation*}
The condition that $P$ is affine, in picture, is
\begin{equation*}
 \includegraphics[align=c,scale=0.35,keepaspectratio=true,clip=true,trim=70pt 0pt 70pt 0pt]{.//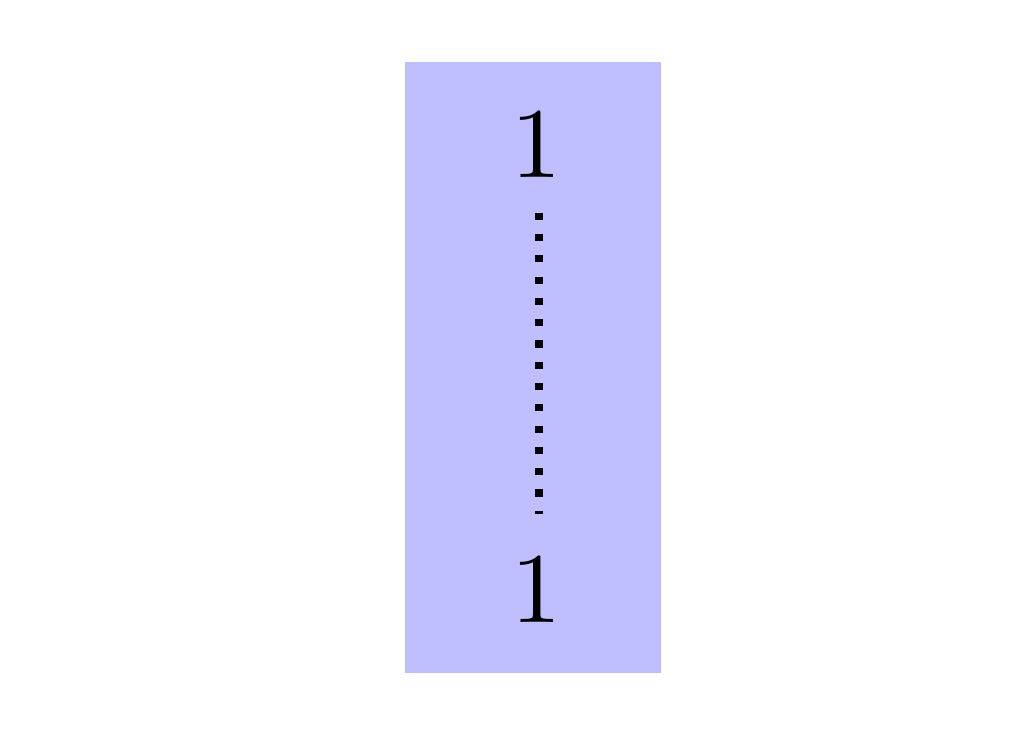}
 =
 \includegraphics[align=c,scale=0.35,keepaspectratio=true,clip=true,trim=70pt 0pt 70pt 0pt]{.//unit.pdf}
 \quad\mbox{or even more trivially:}\quad
 \includegraphics[align=c,scale=0.35,keepaspectratio=true,clip=true,trim=70pt 0pt 70pt 0pt]{.//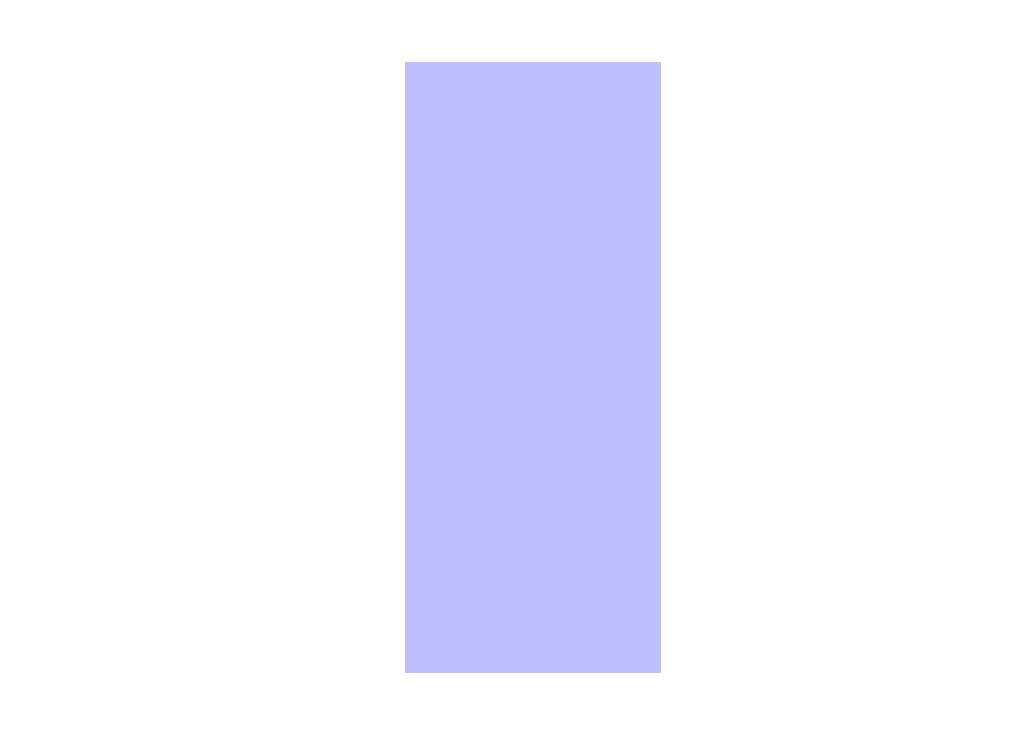}
 =
 \includegraphics[align=c,scale=0.35,keepaspectratio=true,clip=true,trim=70pt 0pt 70pt 0pt]{.//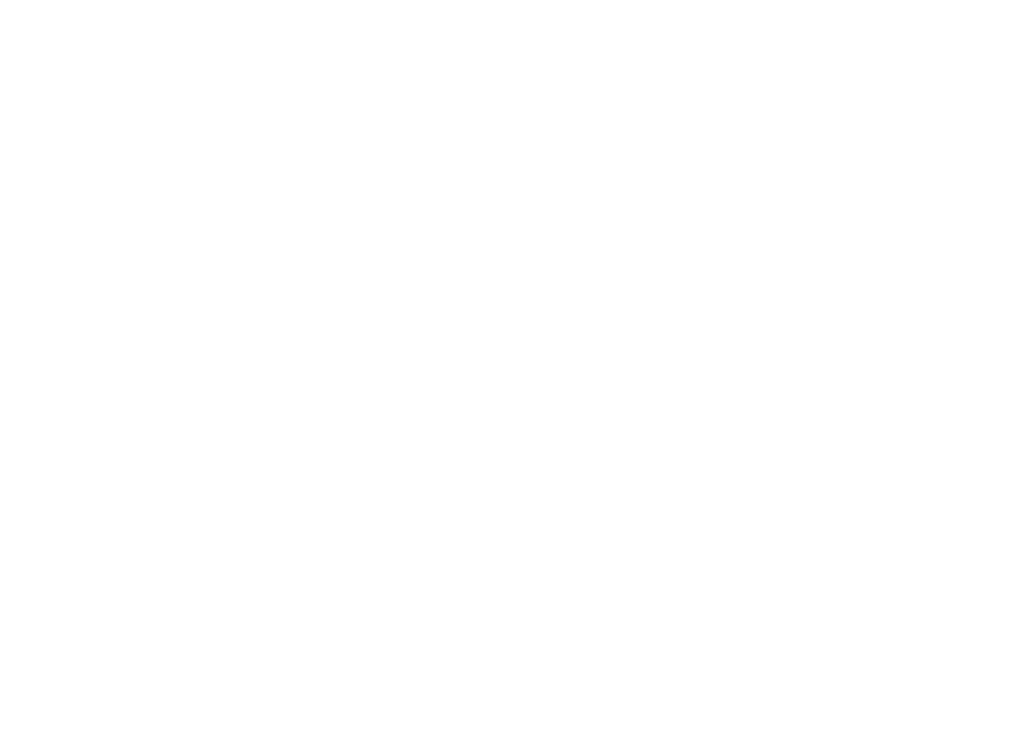}
\end{equation*}
Since we are in a symmetric monoidal category, there is a canonical \emph{braiding} isomorphism $X\otimes Y \to Y\otimes X$. We represent it as:
\begin{equation*}
 \includegraphics[align=c,scale=0.35,keepaspectratio=true]{.//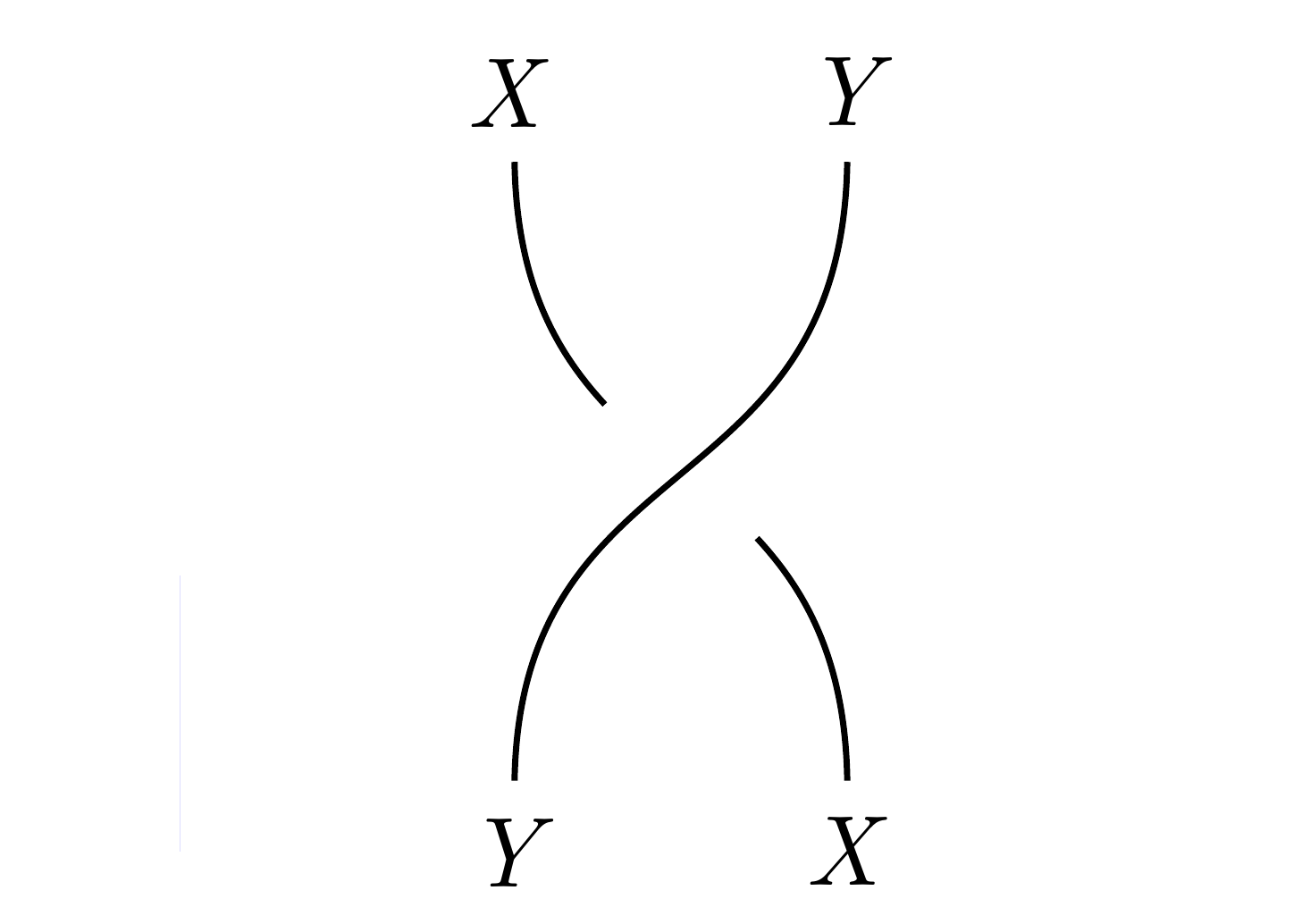}
\end{equation*}
which one can think of as ``swapping'' $X$ and $Y$. In a symmetric monoidal category, if we apply it twice, we obtain the identity.

We turn now to the monad structure of $P$. The monad unit $\delta:X\to PX$ is a natural transformation which ``puts $X$ into a shading'', while the multiplication $E:PPX\to PX$ goes from a double shading to a single shading:
\begin{equation*}
 \includegraphics[align=c,scale=0.35,keepaspectratio=true]{.//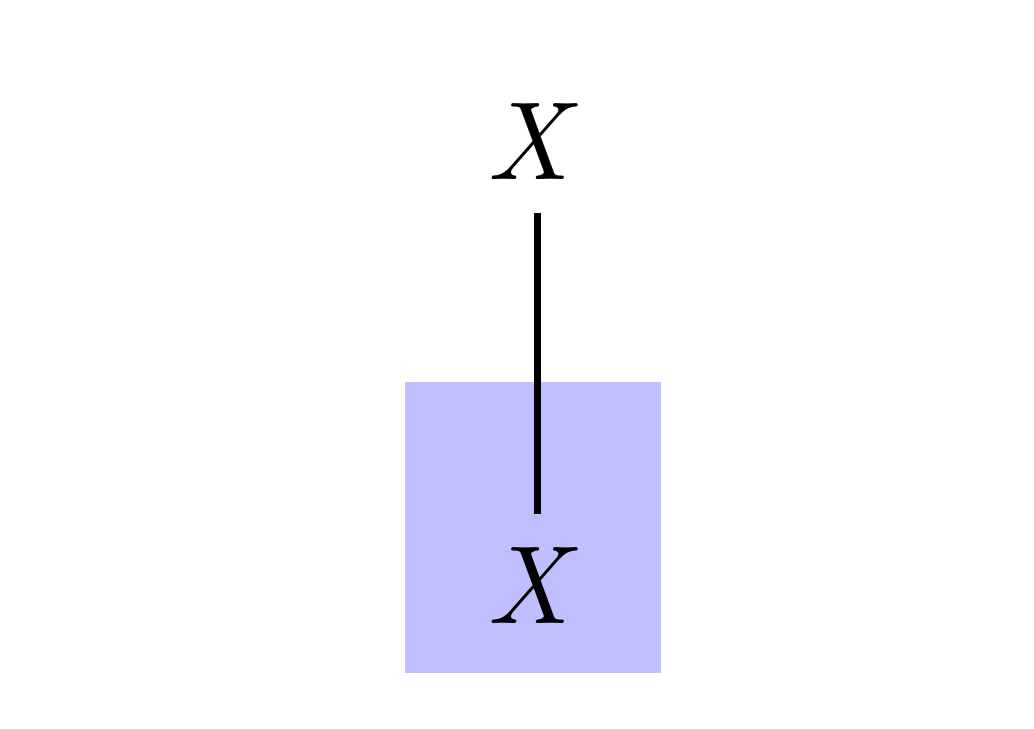}
 \quad\mbox{and}\quad
 \includegraphics[align=c,scale=0.35,keepaspectratio=true]{.//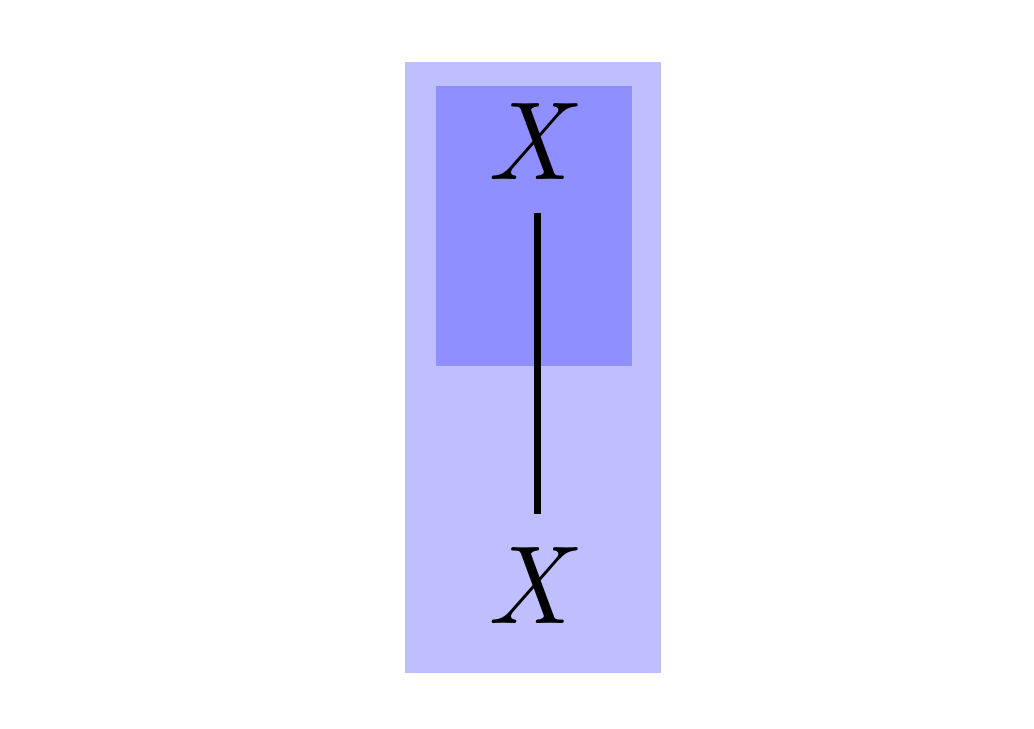}
\end{equation*}
We do not draw a box for these ``structure maps'': we consider them the canonical maps from their source to their target. The diagrams above will always denote $\delta$ and $E$, never other morphisms.

\section{Monoidal structure of probability monads}\label{pmon}

Let $P$ be an affine probability monad on a strict symmetric semicartesian monoidal category $\cat{C}$.
In this setting, a \emph{monoidal structure} for the functor $P$ amounts to a natural map $\nabla:PX\otimes PY\to P(X\otimes Y)$ with associativity and unitality conditions. In terms of graphical calculus, $\nabla$ is a way to pass from $PX\otimes PY$, i.e.:
\begin{equation*}
 \includegraphics[align=c,scale=0.35,keepaspectratio=true,clip=true,trim=0pt 50pt 0pt 50pt]{.//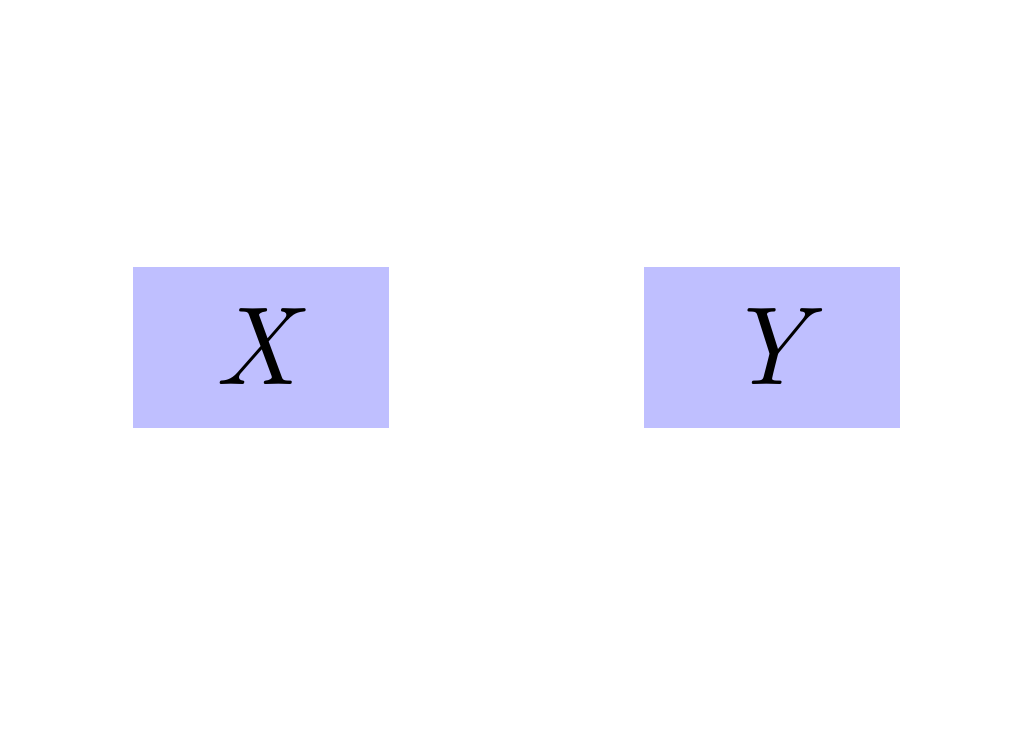}
\end{equation*}
to $P(X\otimes Y)$, i.e.:
\begin{equation*}
 \includegraphics[align=c,scale=0.35,keepaspectratio=true,clip=true,trim=0pt 50pt 0pt 50pt]{.//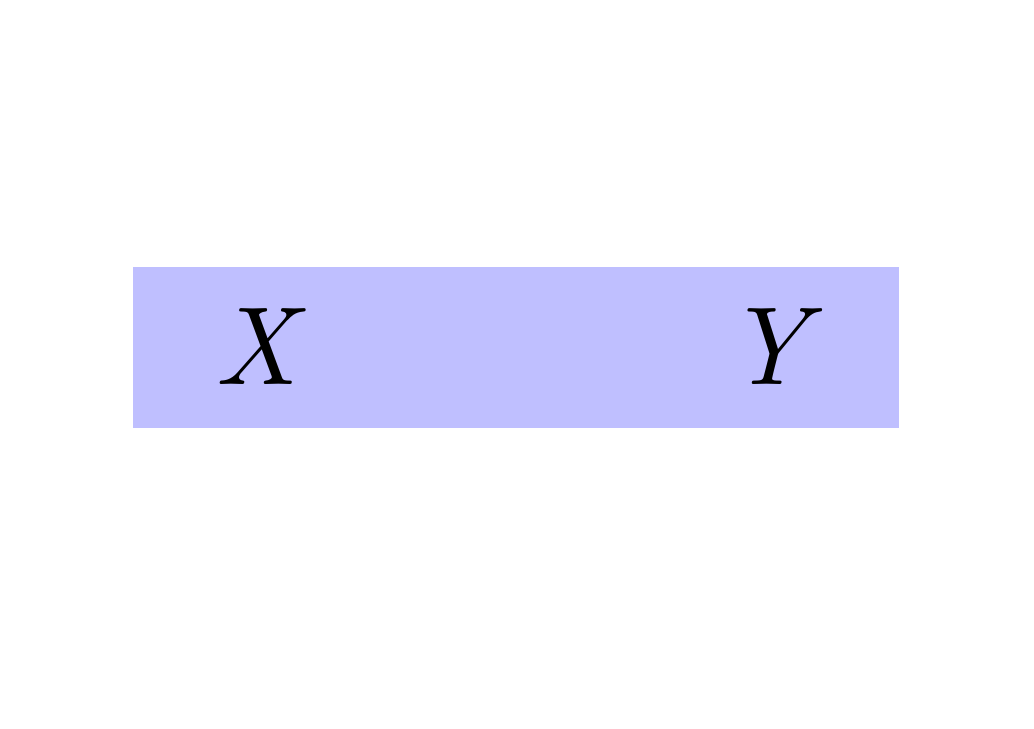}
\end{equation*}
so we can represent it as:
\begin{equation*}
 \includegraphics[align=c,scale=0.35,keepaspectratio=true]{.//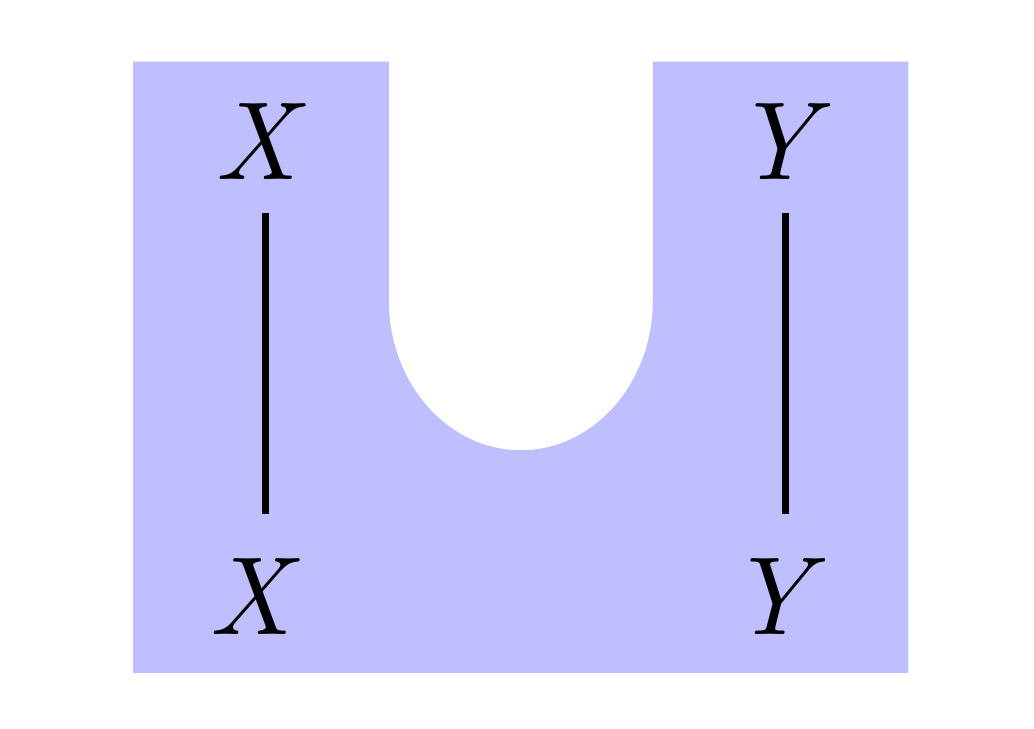}
\end{equation*}
We again do not put any box, as we consider it the canonical map of the form given by the diagram above.
The probabilistic interpretation is the following: given $p\in PX$ and $q\in PY$, there is a canonical (albeit not unique) way of obtaining a joint in $P(X\otimes Y)$, namely the product probability. 
Technically we also should need a map $1\to P(1)\cong 1$, i.e.
\begin{equation*}
 \includegraphics[align=c,scale=0.35,keepaspectratio=true]{.//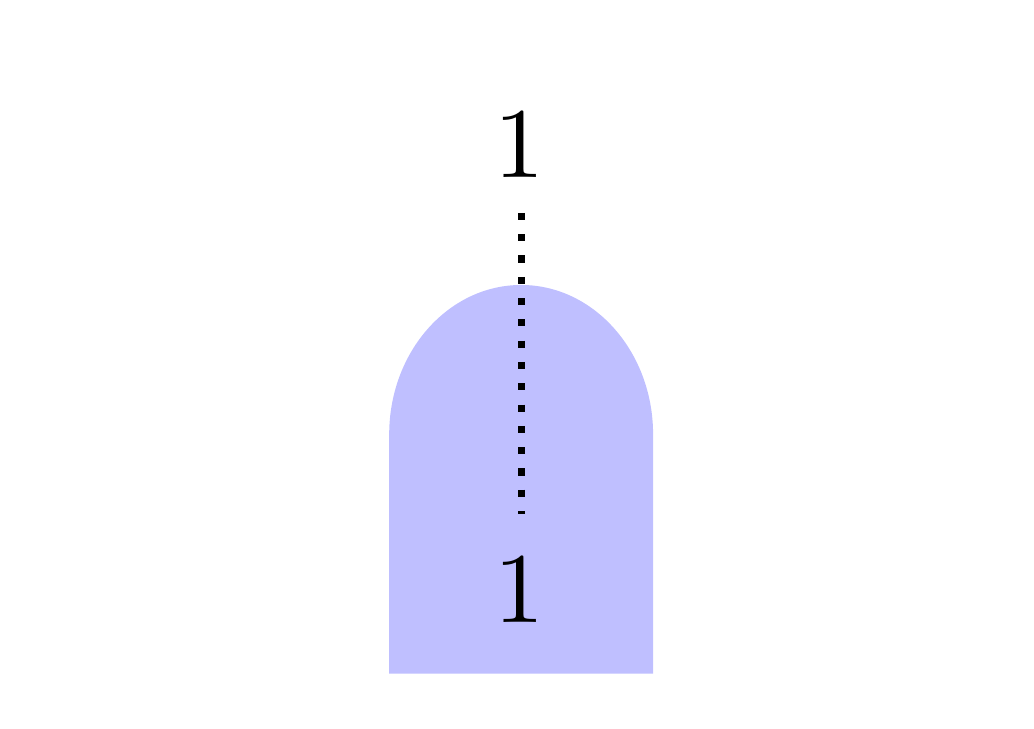}
\quad\mbox{or, omitting the unit, simply}\quad
 \includegraphics[align=c,scale=0.35,keepaspectratio=true]{.//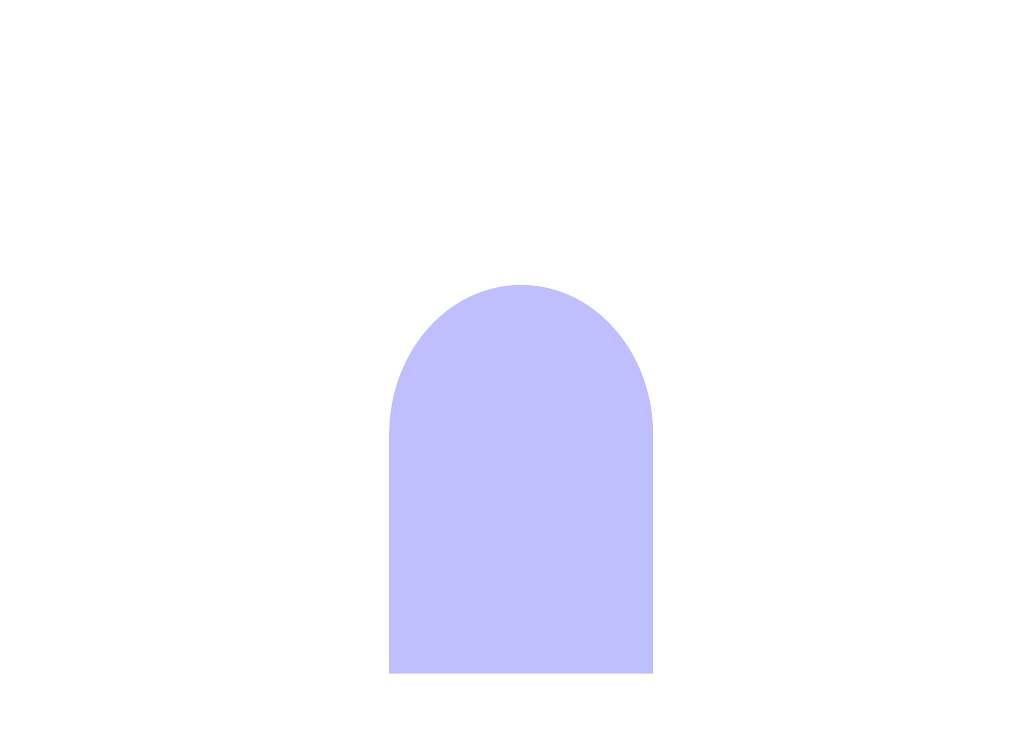}
\end{equation*}
But due to our affineness assumption, such a map can only be the identity.
The associativity condition now says that it does not matter in which way we multiply first:
\begin{equation*}
 \includegraphics[align=c,scale=0.35,keepaspectratio=true]{.//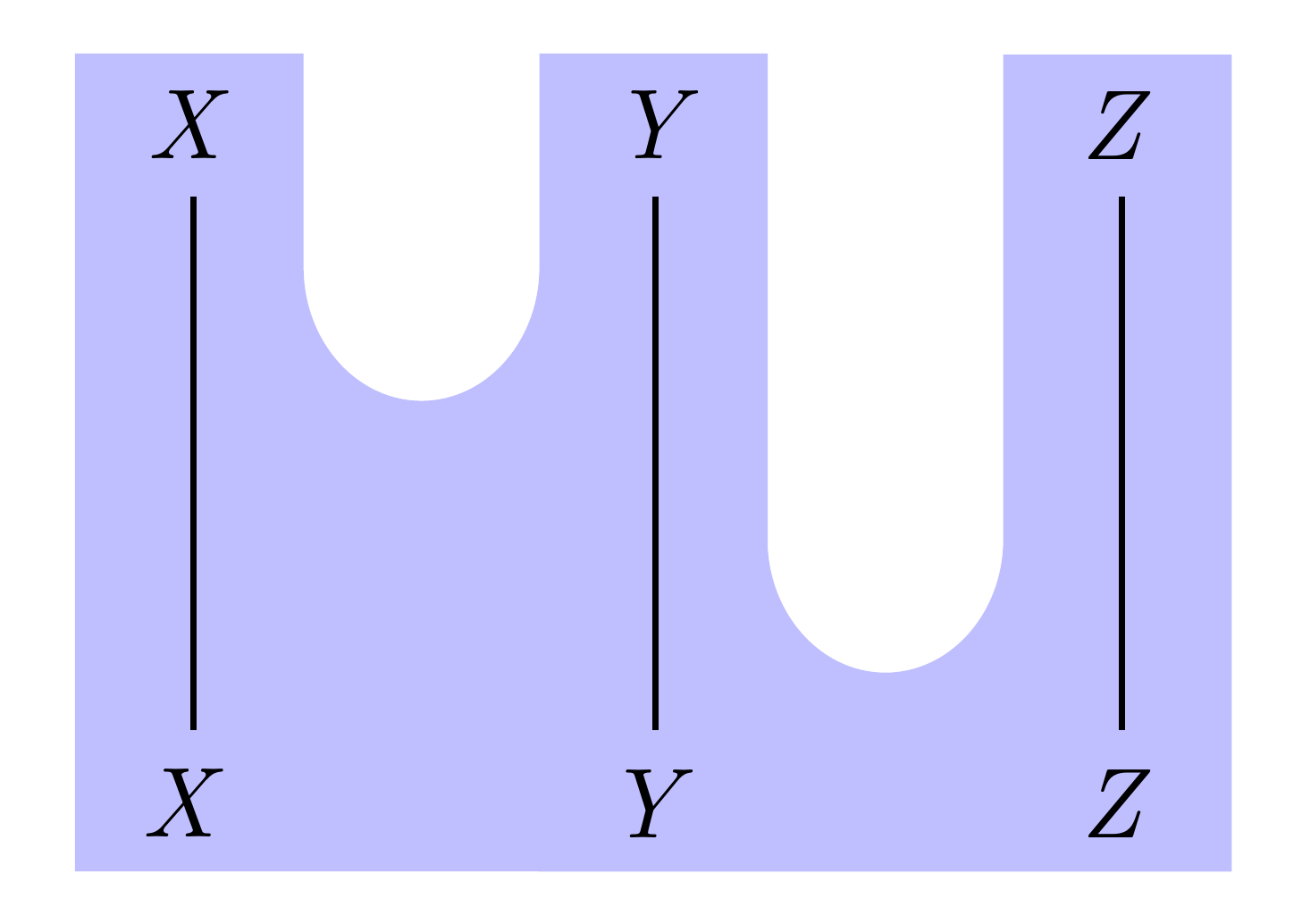}
 = 
 \includegraphics[align=c,scale=0.35,keepaspectratio=true]{.//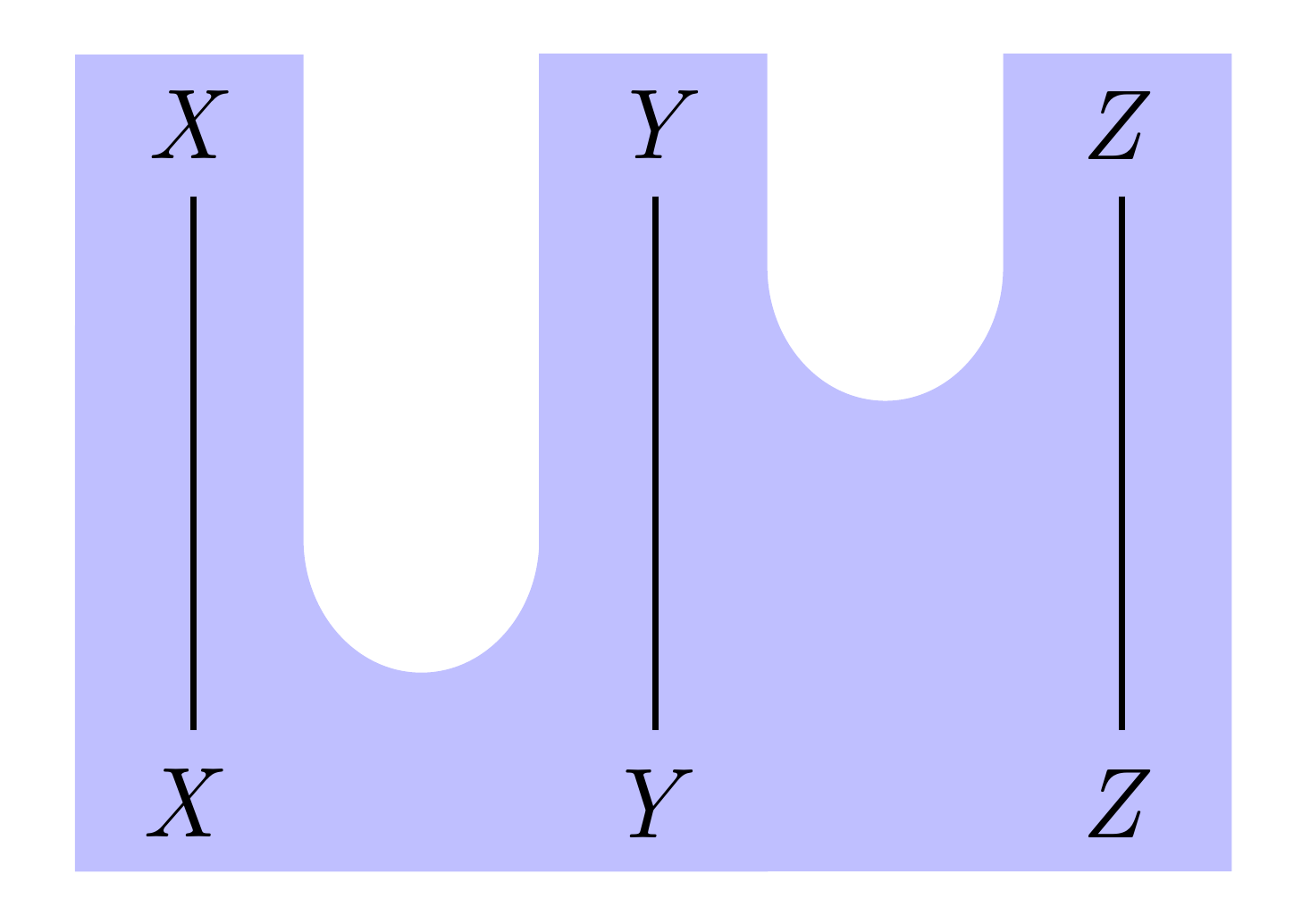}
\end{equation*}
so that there is really just one way of forming a product of three probability distributions. 
The left and right unitality conditions say that:
\begin{equation*}
 \includegraphics[align=c,scale=0.35,keepaspectratio=true]{.//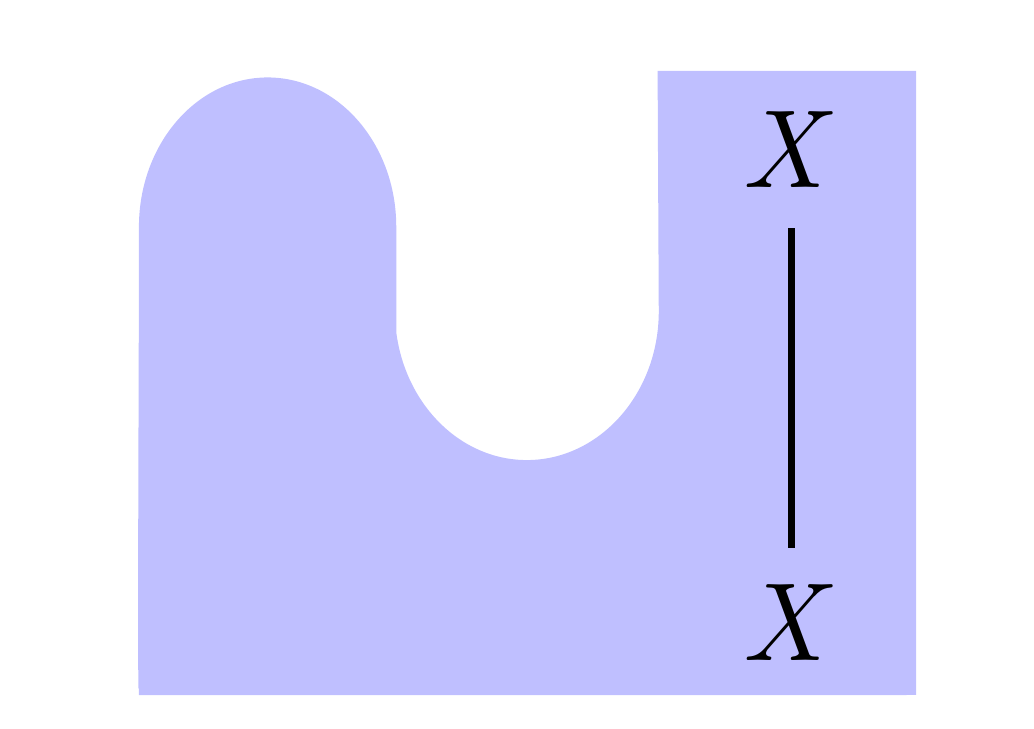}
 =
 \includegraphics[align=c,scale=0.35,keepaspectratio=true,clip=true,trim=10pt 0pt 10pt 0pt]{.//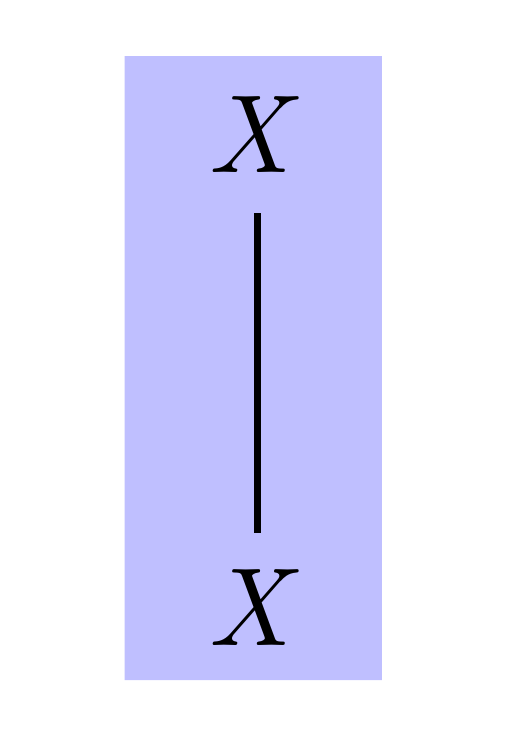}
 =
 \includegraphics[align=c,scale=0.35,keepaspectratio=true]{.//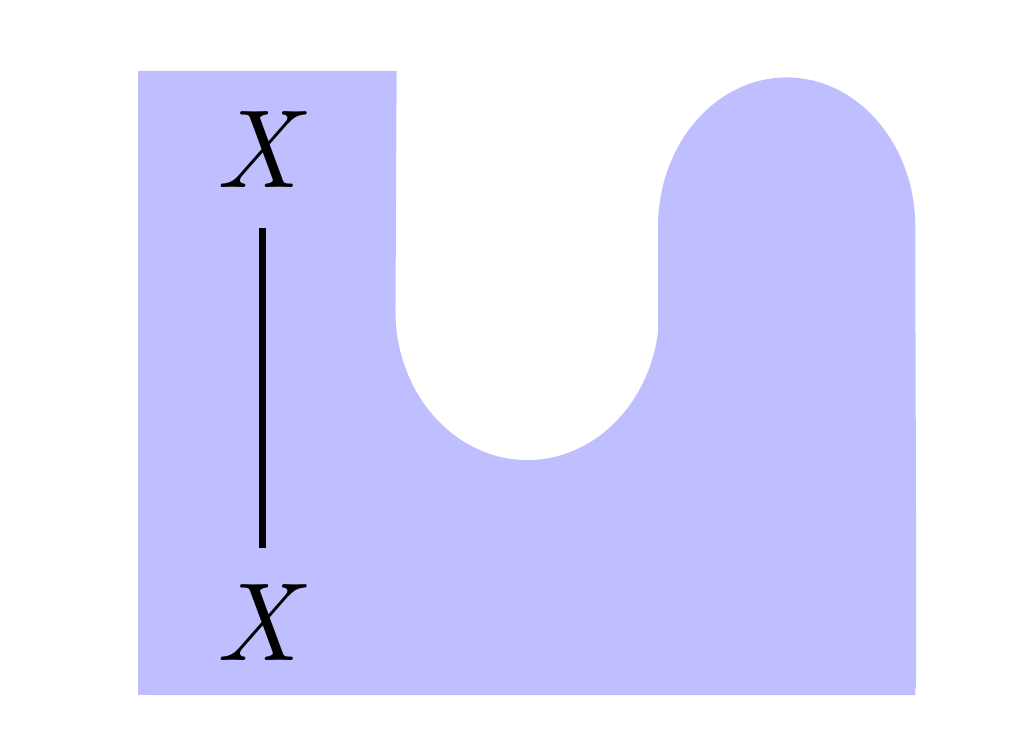}
\end{equation*}
which means that the product distribution of some $p\in PX$ with the unique measure on $1$ is the same as just $p$.

An \emph{opmonoidal structure} for the functor $P$ amounts to a natural map $\Delta:P(X\otimes Y)\to PX\otimes PY$, which we represent as:
\begin{equation*}
 \includegraphics[align=c,scale=0.35,keepaspectratio=true]{.//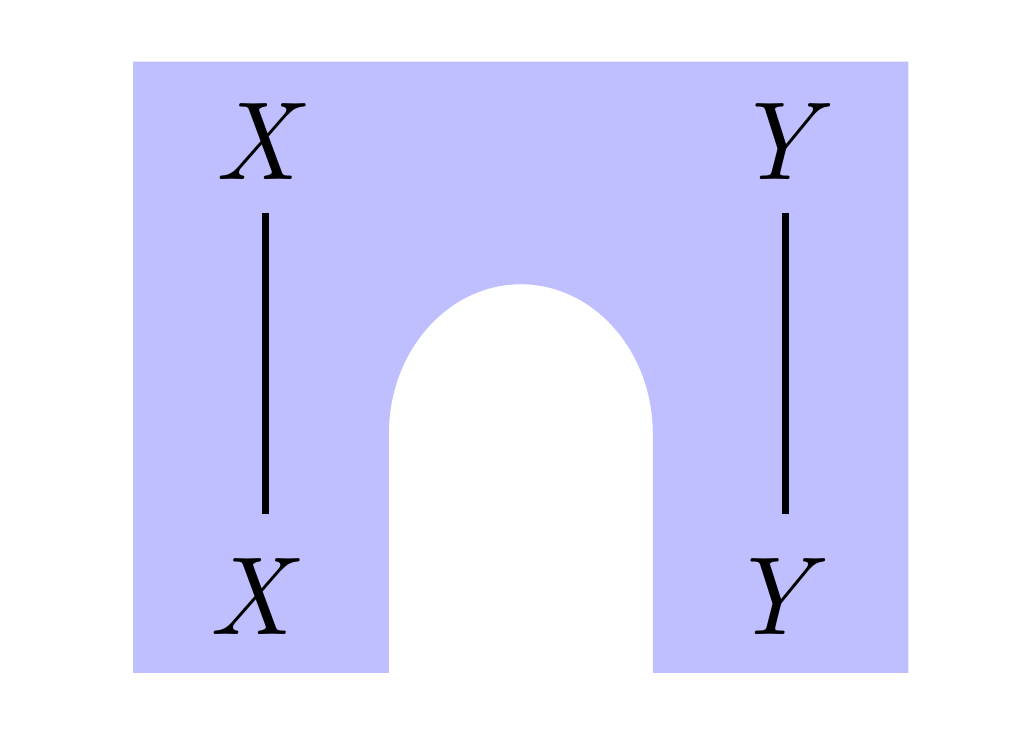}
\end{equation*}
and again a map $P(1)\to 1$, i.e.
\begin{equation*}
 \includegraphics[align=c,scale=0.35,keepaspectratio=true,clip=true,trim=0pt 0pt 0pt 70pt]{.//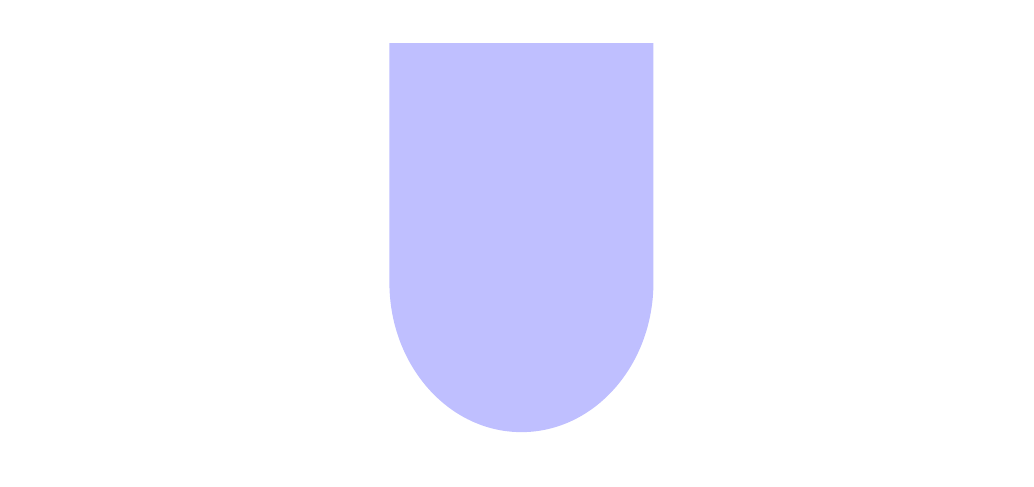}
\end{equation*}
which in this setting can only be the identity.
We have, dually, a coassociativity condition:
\begin{equation*}
 \includegraphics[align=c,scale=0.35,keepaspectratio=true]{.//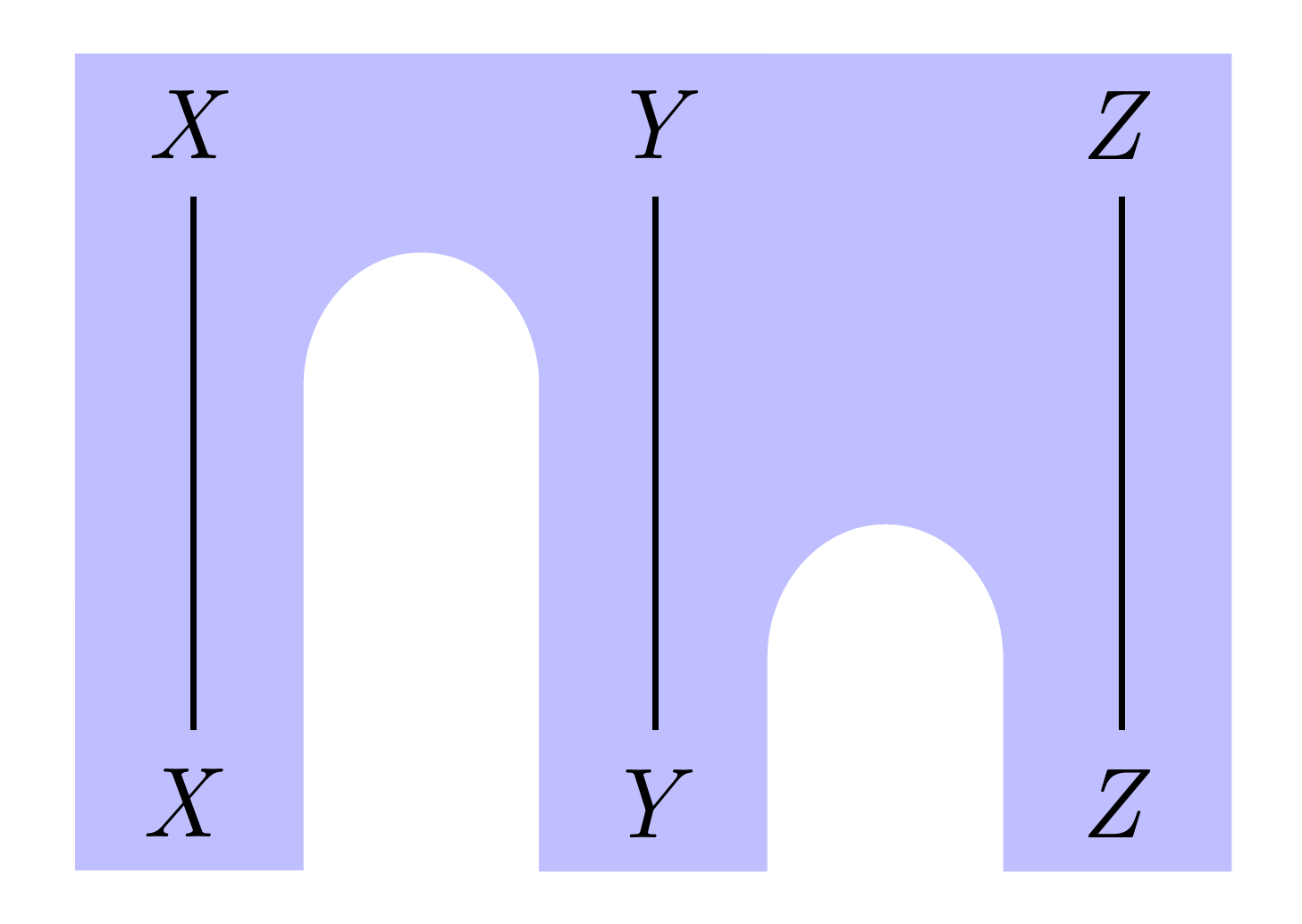}
 =
 \includegraphics[align=c,scale=0.35,keepaspectratio=true]{.//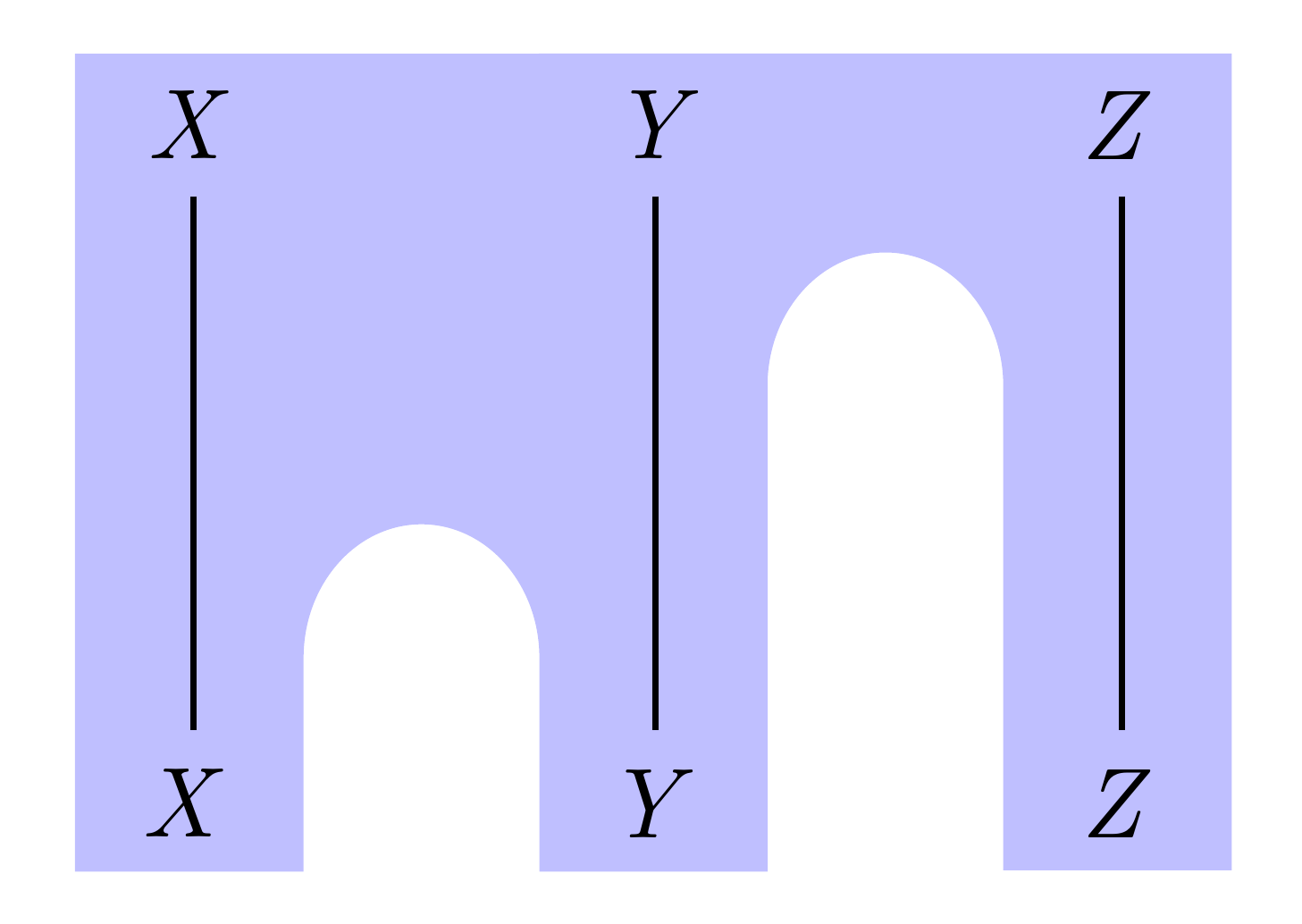}
\end{equation*}
The probabilistic interpretation is that given a joint probability distribution $r\in P(X\otimes Y)$, we can canonically obtain marginal distributions on $PX$ and $PY$, and again, if we have many factors, it does not matter in which order we take the marginals.
Analogously, we have left and right counitality conditions:
\begin{equation*}
 \includegraphics[align=c,scale=0.35,keepaspectratio=true]{.//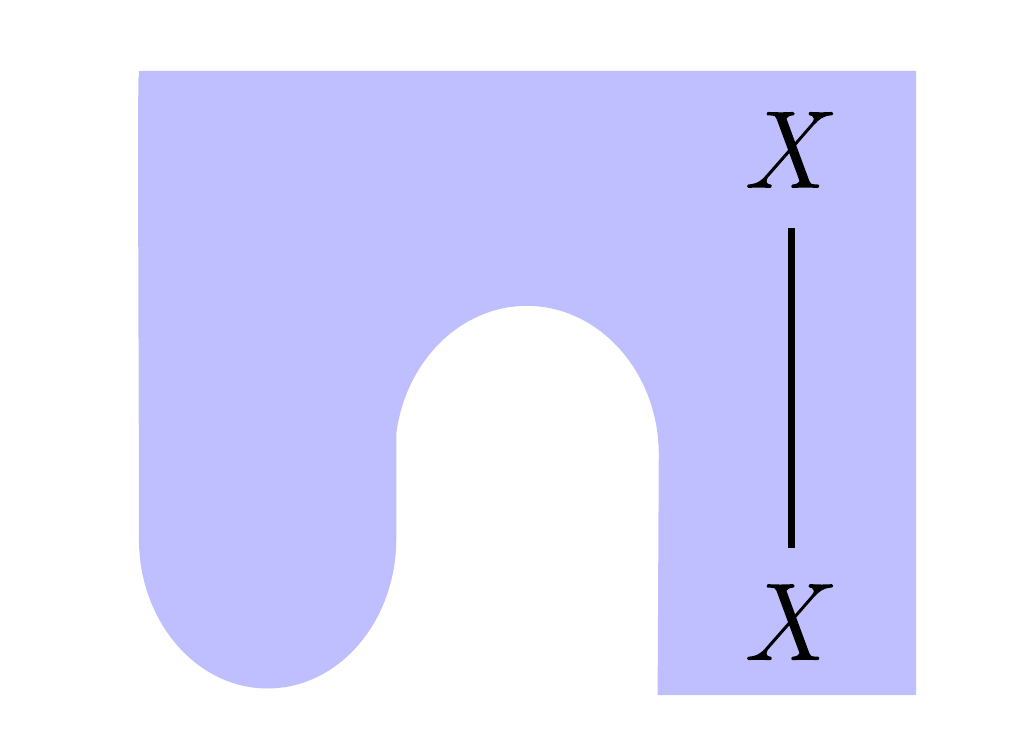}
 =
 \includegraphics[align=c,scale=0.35,keepaspectratio=true,clip=true,trim=10pt 0pt 10pt 0pt]{.//unital1.pdf}
 =
 \includegraphics[align=c,scale=0.35,keepaspectratio=true]{.//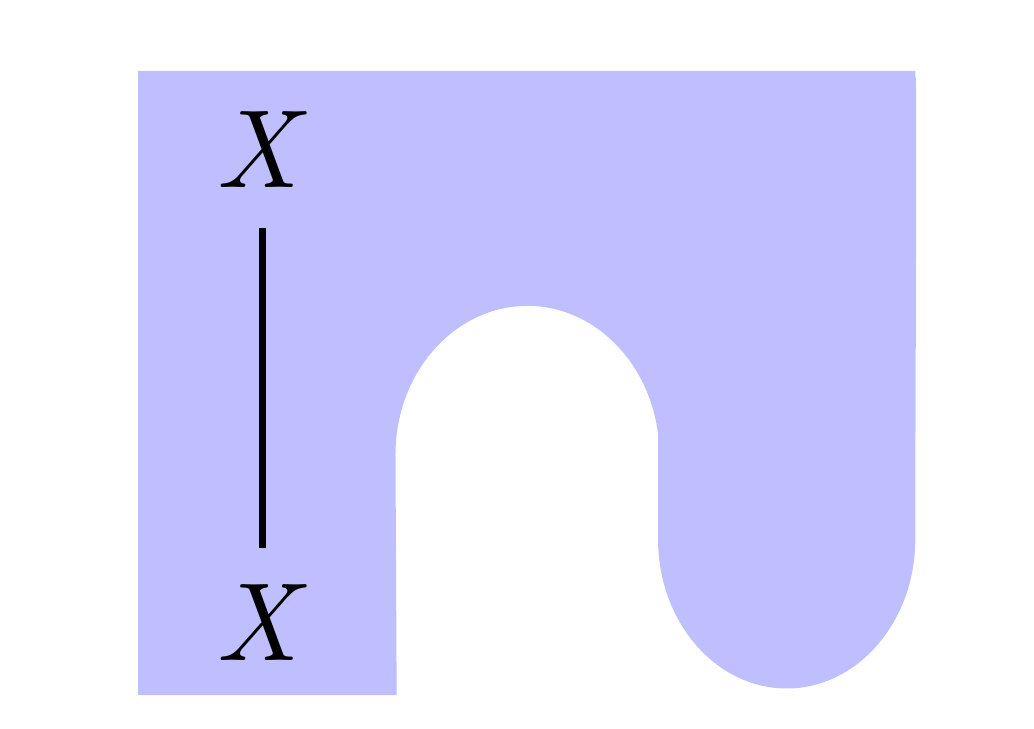}
\end{equation*}
which say that the marginal distribution of some $p\in P(X\otimes 1)$ on the first factor (or of some $p\in P(1\otimes X)$ on the second factor) is just $p$ again.

The monoidal and opmonoidal structure should interact to form a \emph{bimonoidal structure}~\cite{monoidal} for the functor $P$. To have that, we have first of all some unit-counit conditions, which in our setting are trivially satisfied, since they only involve maps to $1$. 
But more importantly, the following bimonoidality (or distributivity) condition needs to hold:
\begin{equation}\label{braidcond}
 \includegraphics[align=c,scale=0.3,keepaspectratio=true]{.//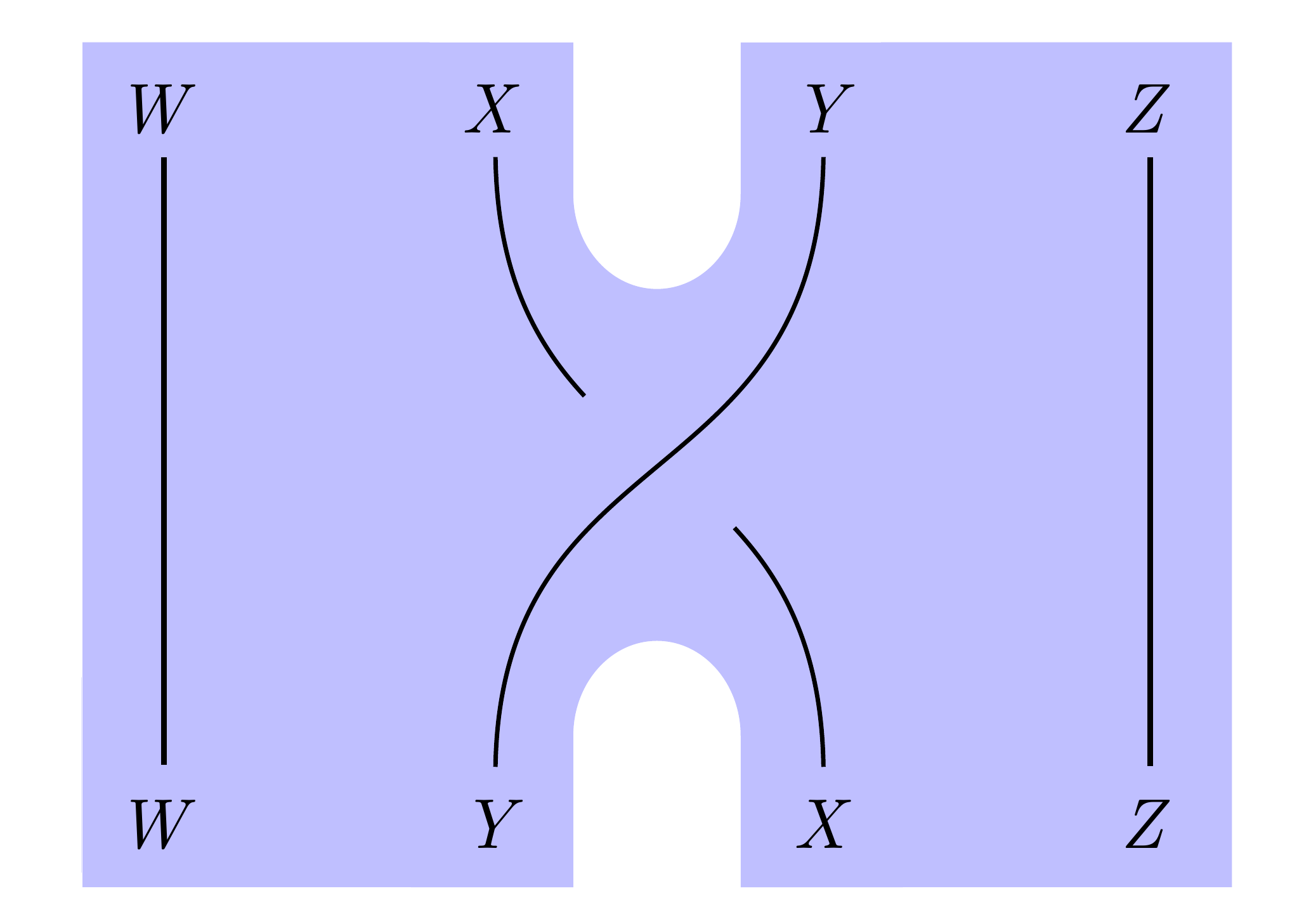}
 =
 \includegraphics[align=c,scale=0.3,keepaspectratio=true]{.//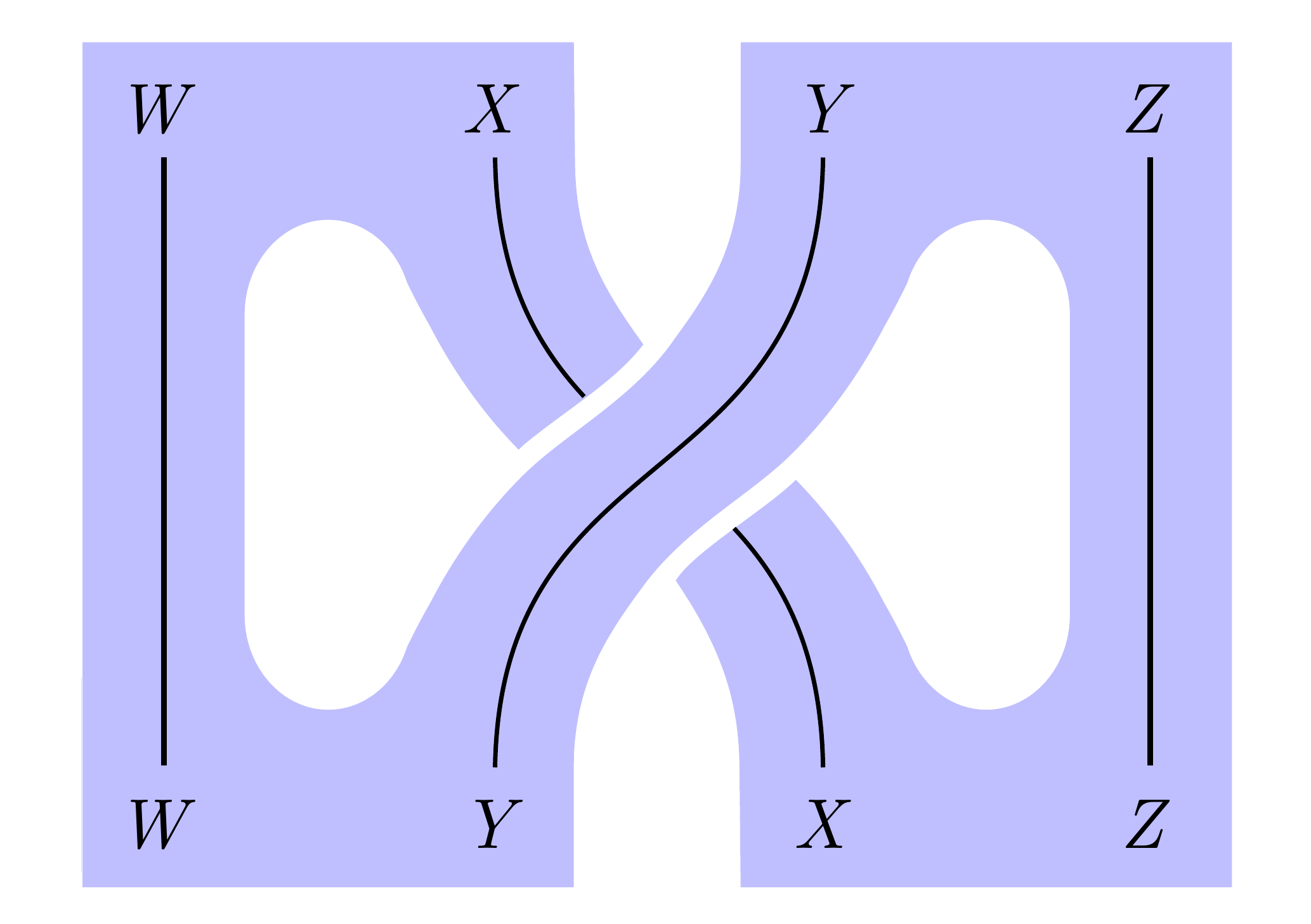}
\end{equation}
where the center of the diagram on the right is a swap of $PX$ and $PY$.
The probabilistic interpretation is a bit involved, and it has to do with stochastic independence. We will analyze it separately in Section \ref{independence}.

We can say even more about the structure of joints and marginals: the whole monad structure should respect the bimonoidal structure of $P$, i.e.\ $\delta:X\to PX$ and $E:PPX\to PX$ commute with taking joint and marginals. In other words, we are saying that $\delta$ and $E$ should be \emph{bimonoidal natural transformations}. 
In terms of diagrams, we are saying that first of all, $\delta$ commutes with the monoidal multiplication and comultiplication:
\begin{equation*}
 \includegraphics[align=c,scale=0.35,keepaspectratio=true]{.//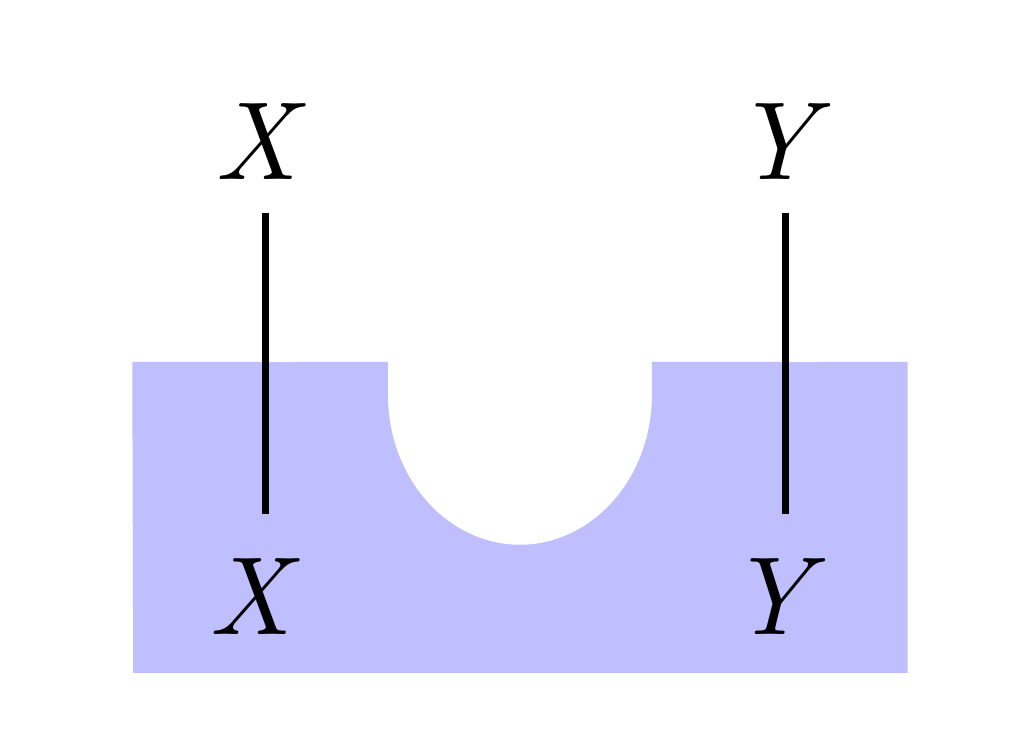}
 = 
 \includegraphics[align=c,scale=0.35,keepaspectratio=true]{.//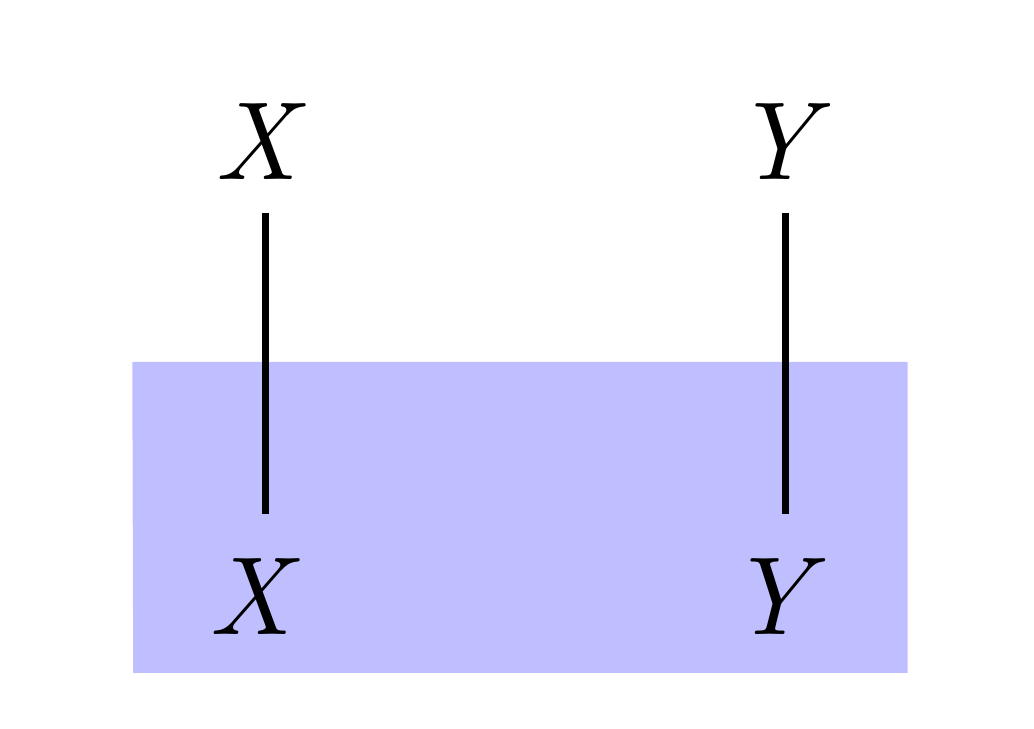}
\end{equation*}
and
\begin{equation*}
 \includegraphics[align=c,scale=0.35,keepaspectratio=true]{.//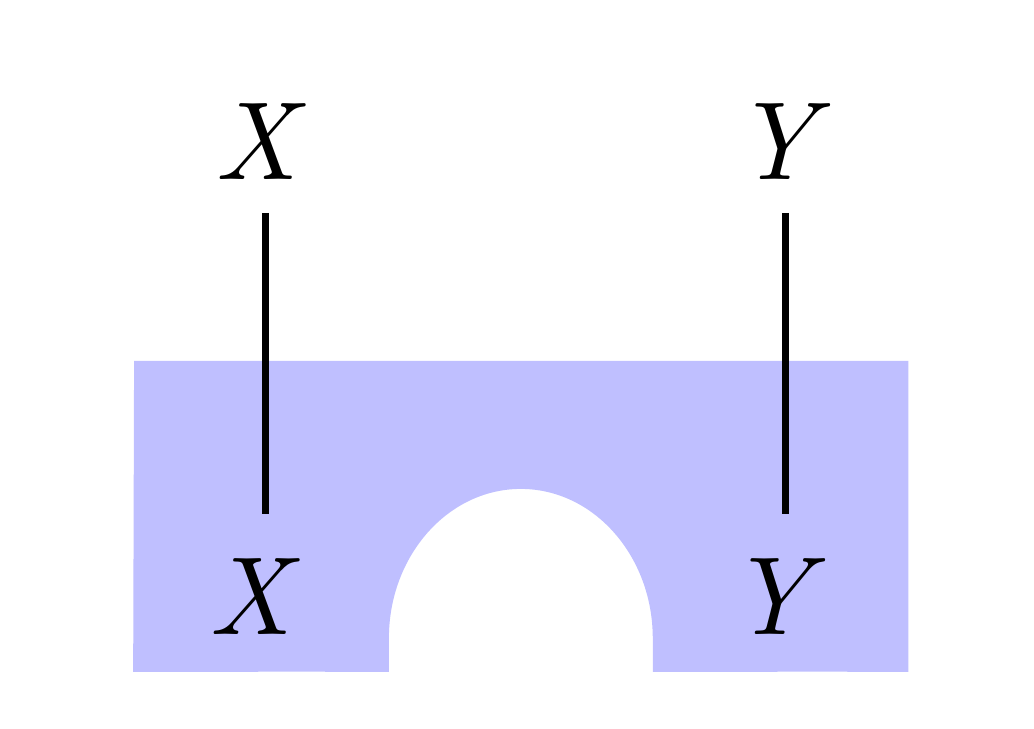}
 =
 \includegraphics[align=c,scale=0.35,keepaspectratio=true]{.//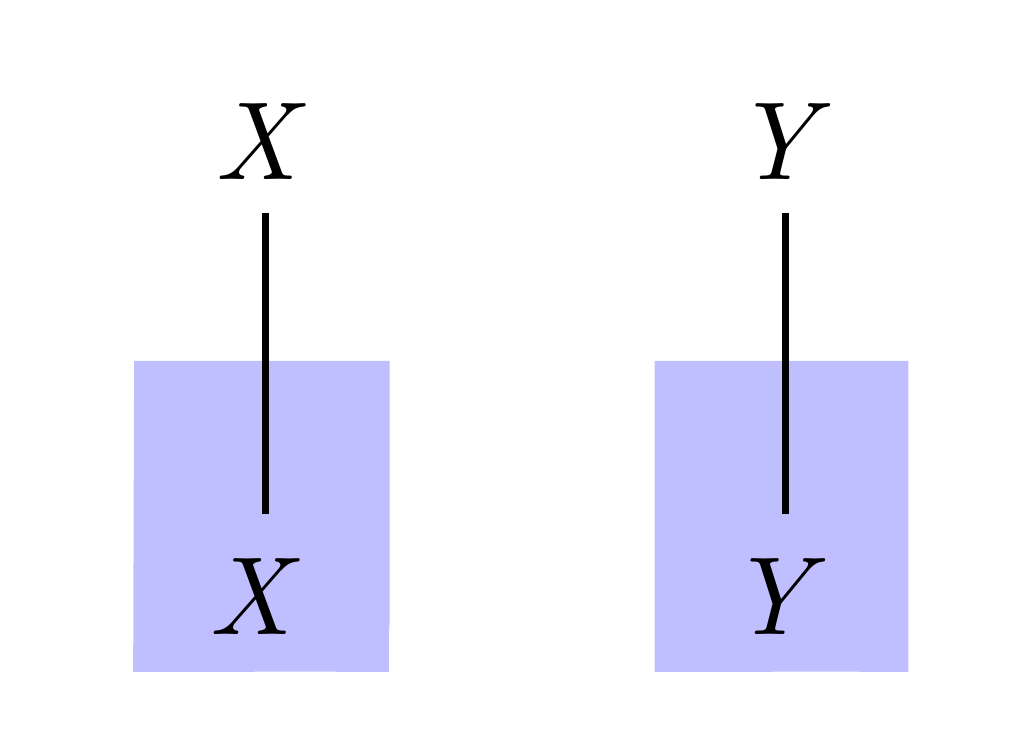}
\end{equation*}
Probabilistically, this means that the delta over the product is the product of the deltas, and that a delta over a product space has as marginals a pair of deltas over the projections.

The same can be said about the average map $E$. It commutes with the multiplication and with the comultiplication:
\begin{equation*}
 \includegraphics[align=c,scale=0.35,keepaspectratio=true]{.//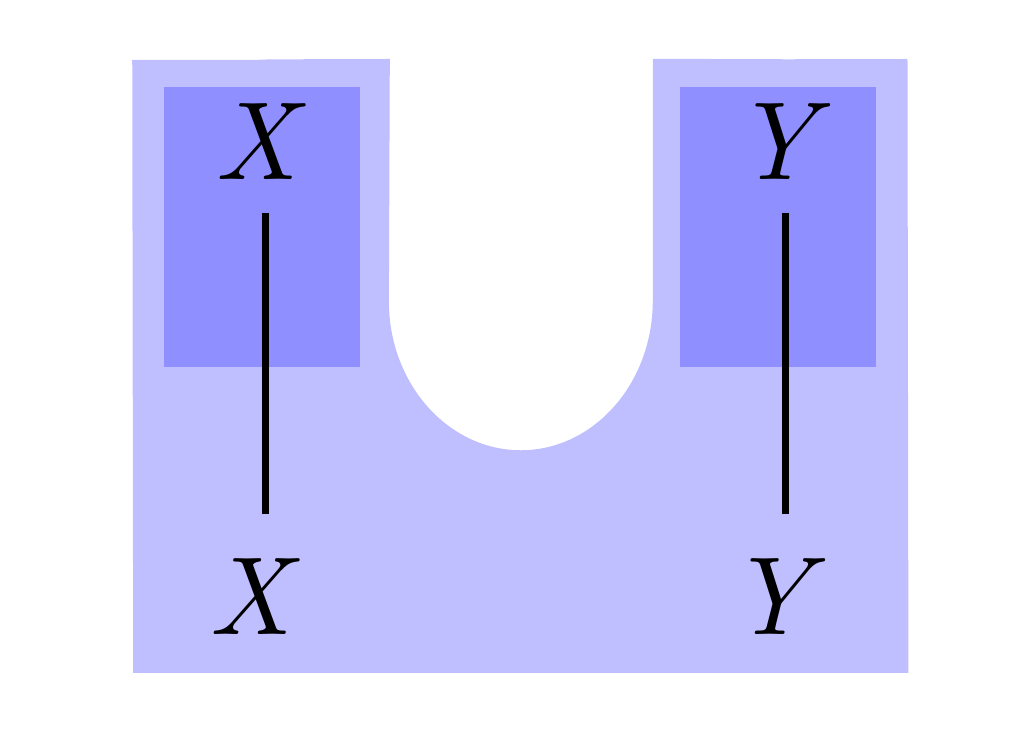}
 =
 \includegraphics[align=c,scale=0.35,keepaspectratio=true]{.//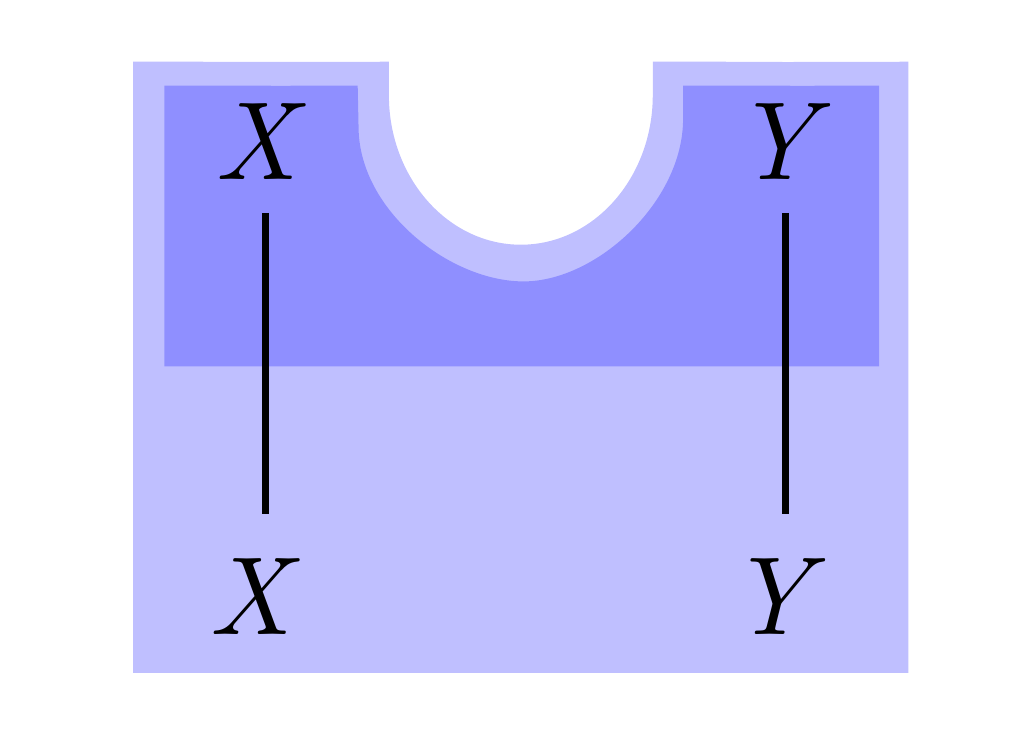}
\end{equation*}
and
\begin{equation*}
 \includegraphics[align=c,scale=0.35,keepaspectratio=true]{.//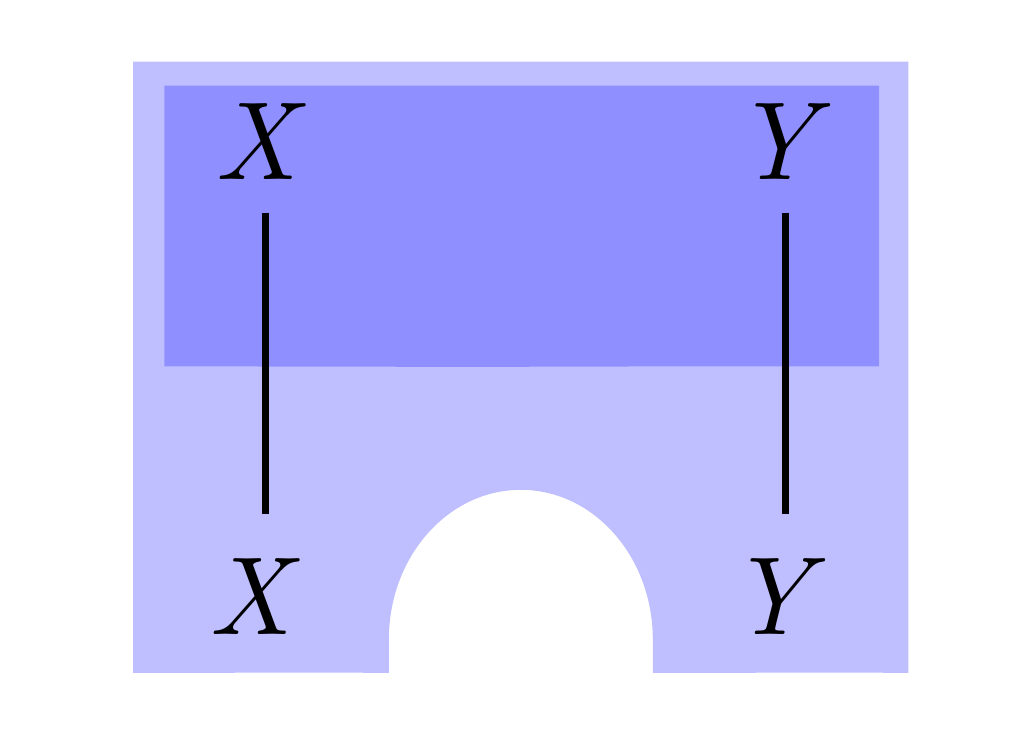}
 = 
 \includegraphics[align=c,scale=0.35,keepaspectratio=true]{.//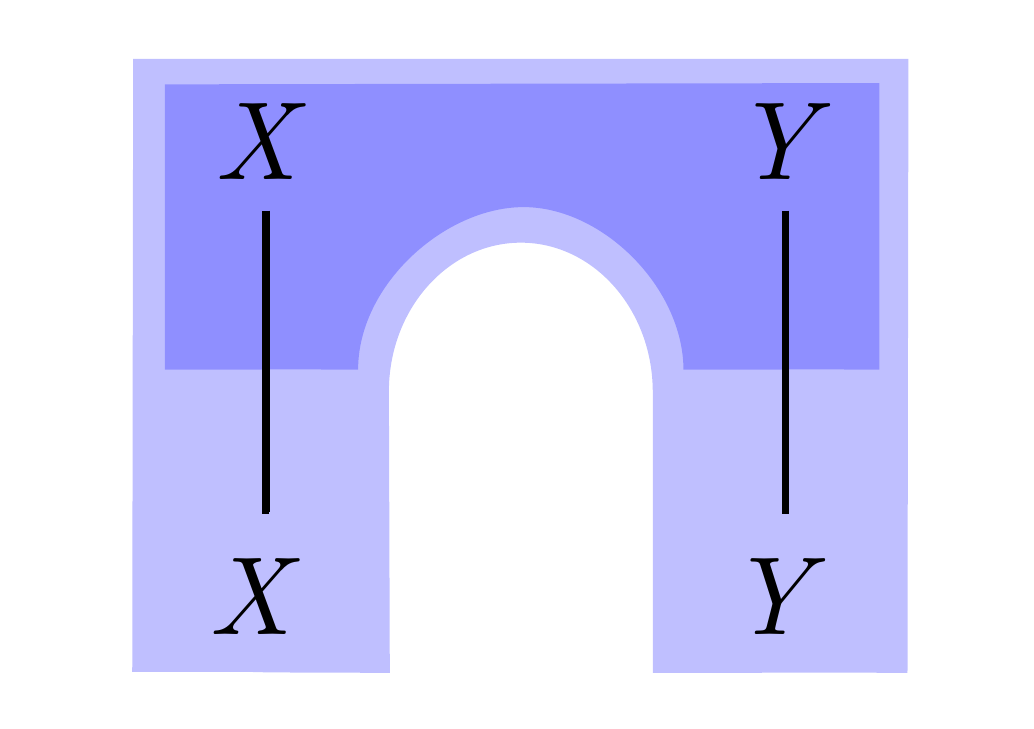}
\end{equation*}
which means that the product of the average is the average of the product, and that the marginals of an average are the averages of the marginals.
These last conditions may seem a bit obscure, but they come up naturally in probability: see as an example the case of the Kantorovich monad (Section \ref{bmp} and its proofs in Appendix \ref{proofs}).

We are in other words requiring that $P$ is a bimonoidal monad.

\begin{deph}
 A \emph{bimonoidal monad} $(P,\delta,E)$ is a monad whose functor is a bimonoidal functor, and whose unit and multiplication are bimonoidal natural transformations.
\end{deph}

The definition above works in general, however the particular conditions for the monoidal and opmonoidal structure which have been given here suffice only in the specific context of a semicartesian monoidal category with an affine monad. In Appendix \ref{monoidalstuff} there is a more general definition, for generic symmetric monoidal categories. The definition given there specializes to the one given above in this context.

As far as we know, this kind of structure has not been considered before in this exact form. Monads in a general bicategory are a standard concept, however to the best of our knowledge the bicategory of monoidal categories, bimonoidal functors, and bimonoidal natural transformations has not been used explicitly. In particular, it has not been used in categorical probability. To avoid possible confusion, let us also point out that the notion of a bimonoidal monad is a distinct concept from that of a \emph{bimonad} \cite{willerton}.

Most probability monads in the literature have an additional symmetry: the multiplication and comultiplication commute with the braiding, i.e.\ they are equivariant with respect to permutations of random variables. This means in diagrams that
\begin{equation*}
 \includegraphics[align=c,scale=0.35,keepaspectratio=true,clip=true,trim=160pt 0pt 160pt 0pt]{.//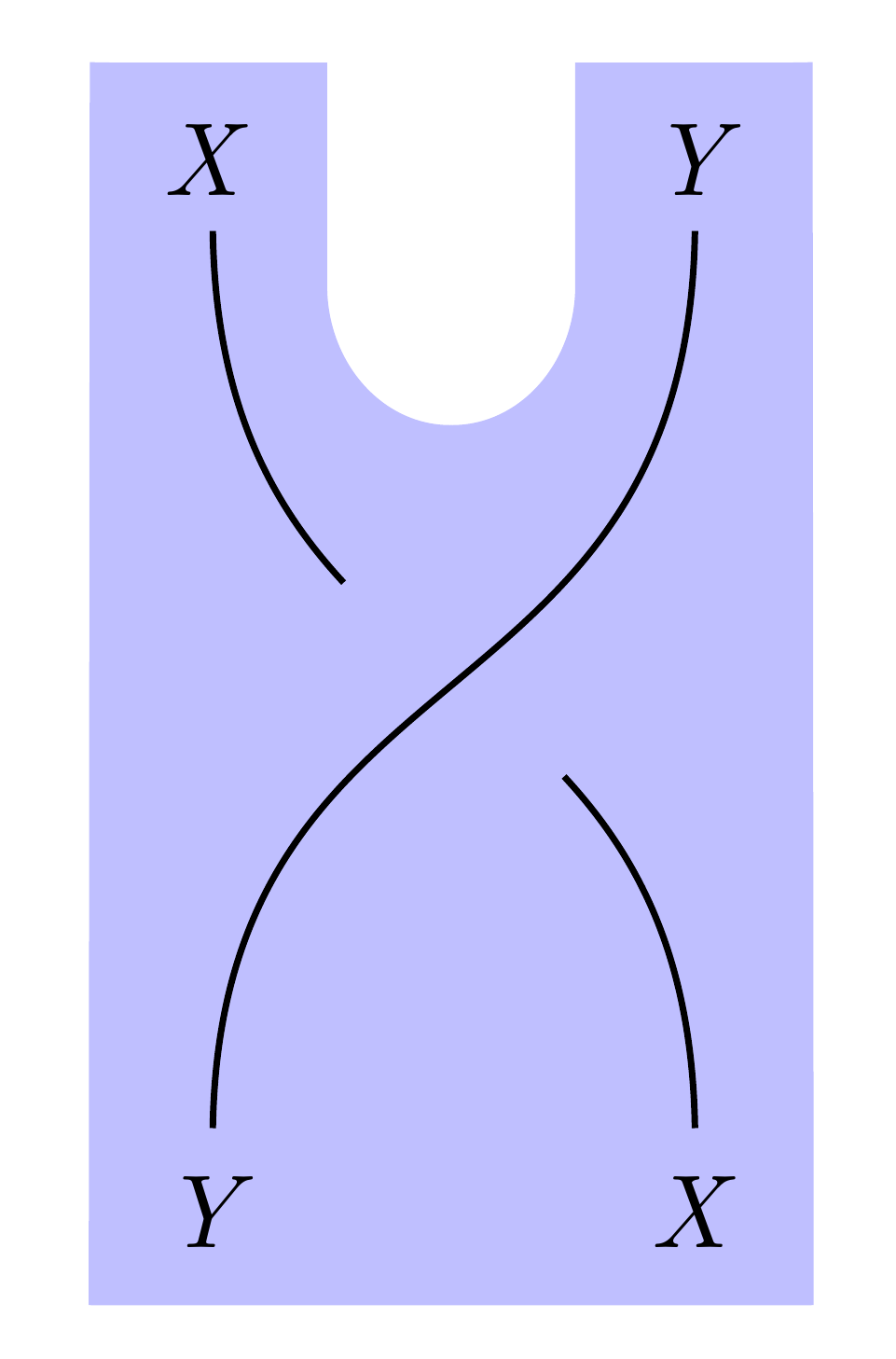}
 =
 \includegraphics[align=c,scale=0.35,keepaspectratio=true,clip=true,trim=160pt 0pt 160pt 0pt]{.//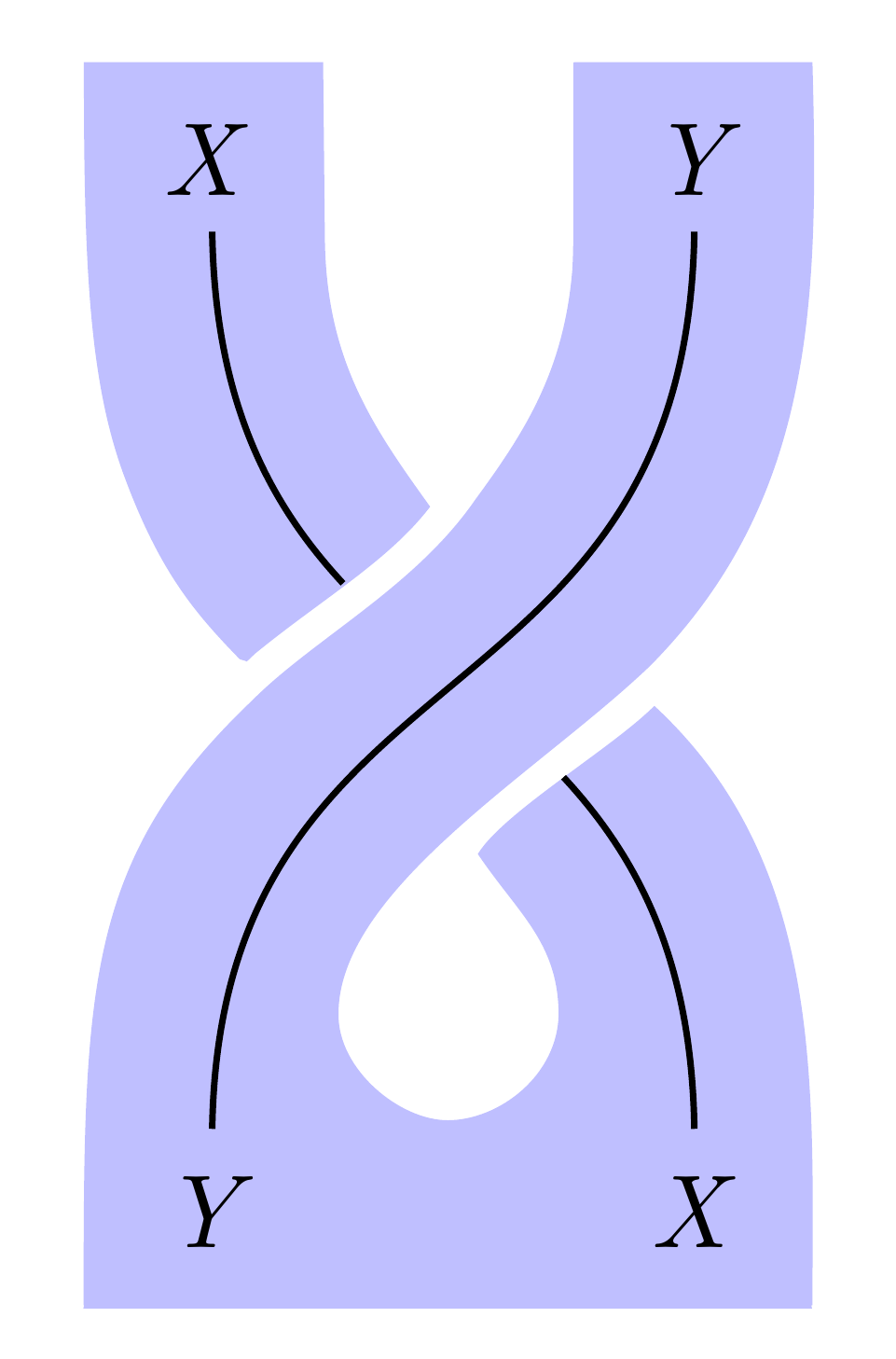}
\end{equation*}
and
\begin{equation*}
 \includegraphics[align=c,scale=0.35,keepaspectratio=true,clip=true,trim=160pt 0pt 160pt 0pt]{.//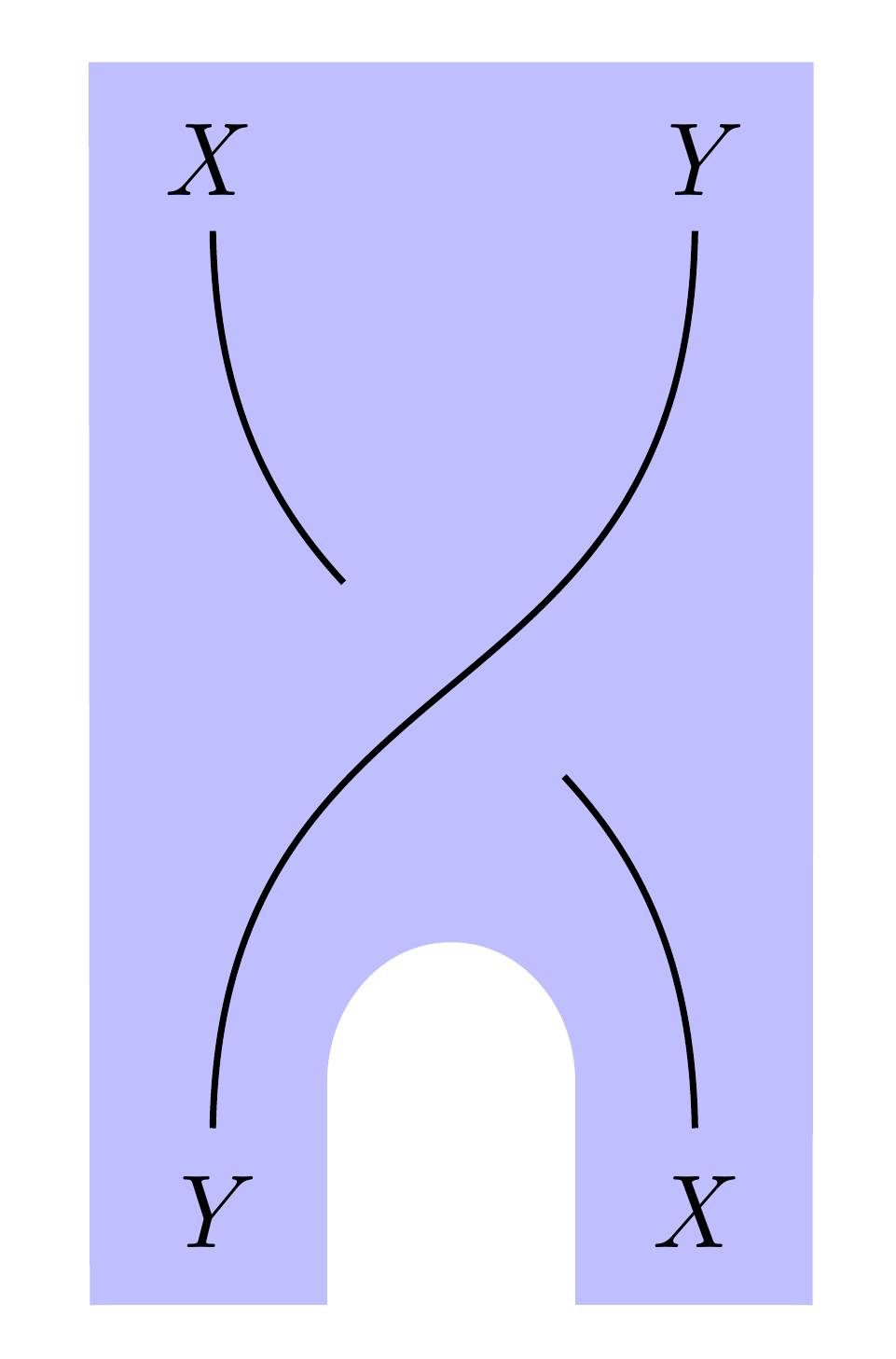}
 = 
 \includegraphics[align=c,scale=0.35,keepaspectratio=true,clip=true,trim=160pt 0pt 160pt 0pt]{.//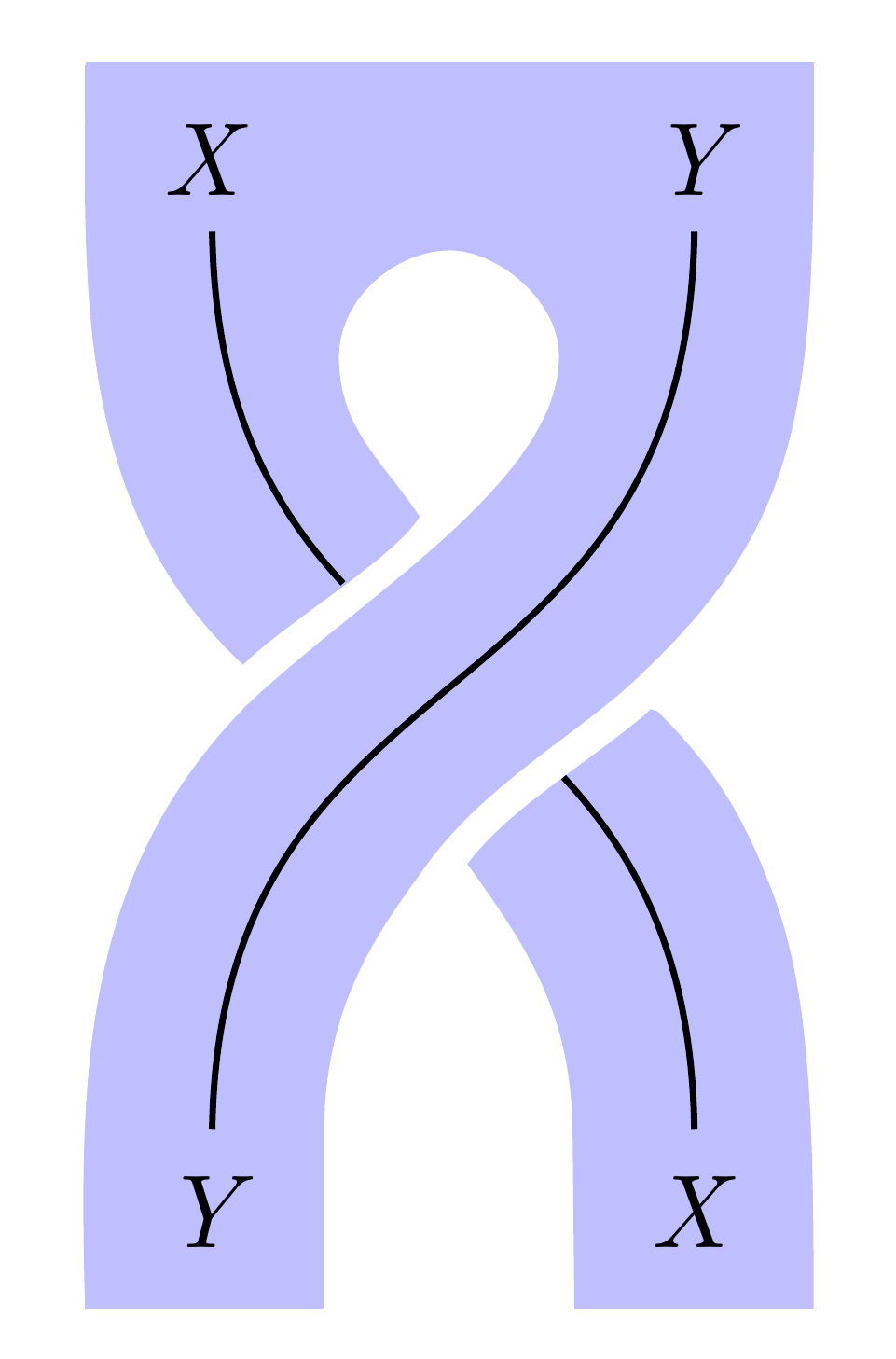}
\end{equation*}
Such a functor (and such a monad) is called \emph{braided} or \emph{symmetric}.
A definition in terms of traditional commutative diagrams can again be found in Appendix \ref{monoidalstuff}.

\subsection{Algebra and coalgebra of random variables}\label{rv}

The so-called ``law of the unconscious statistician'' says that given a function $f:X\to Y$ and a random variable on $X$ with law $p\in PX$, the law of the image random variable under $f$ will be the push-forward of $p$ along $f$. In categorical terms, this simply means that $P$ is a functor, and that the image random variable has law $(Pf)(p)$, where $Pf:PX\to PY$ is given by the push-forward. 

The bimonoidal structure of $P$ comes into play whenever we have functions to and from product spaces. Consider a morphism $f:X\otimes Y \to Z$, which we represent as:
\begin{equation*}
 \includegraphics[align=c,scale=0.35,keepaspectratio=true]{.//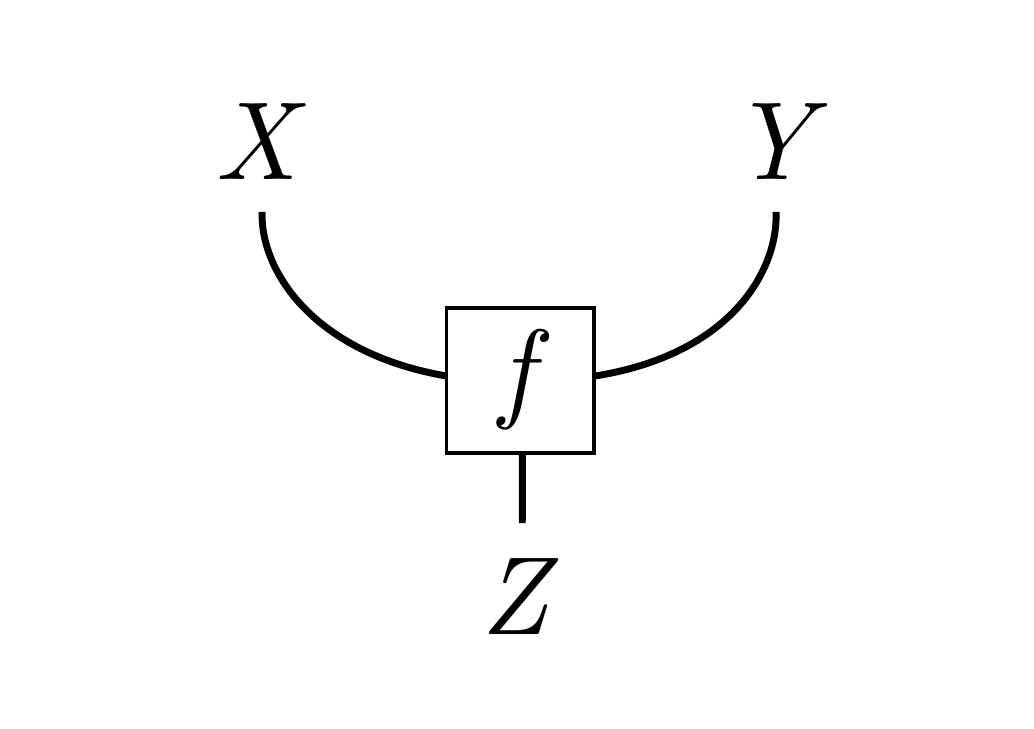}
\end{equation*}
Given random variables $X$ and $Y$, we can form an image random variable on $Z$ in the following way: first we form the joint on $X\otimes Y$ using the monoidal structure, and then we form the image under $f$. In other words, in terms of laws we perform the following composition:
\begin{equation*}
 \includegraphics[align=c,scale=0.35,keepaspectratio=true]{.//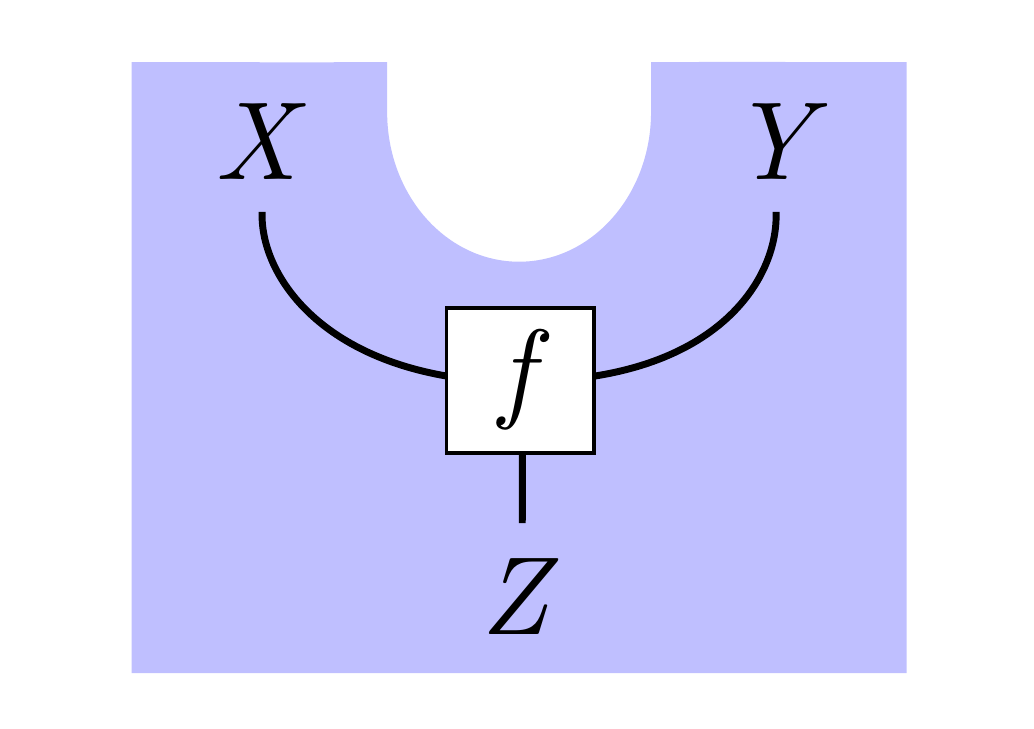}
\end{equation*}

 
For maps in the form $g:X \to Y \otimes Z$ we can proceed analogously by forming the marginals, using the opmonoidal structure:
\begin{equation*}
 \includegraphics[align=c,scale=0.35,keepaspectratio=true]{.//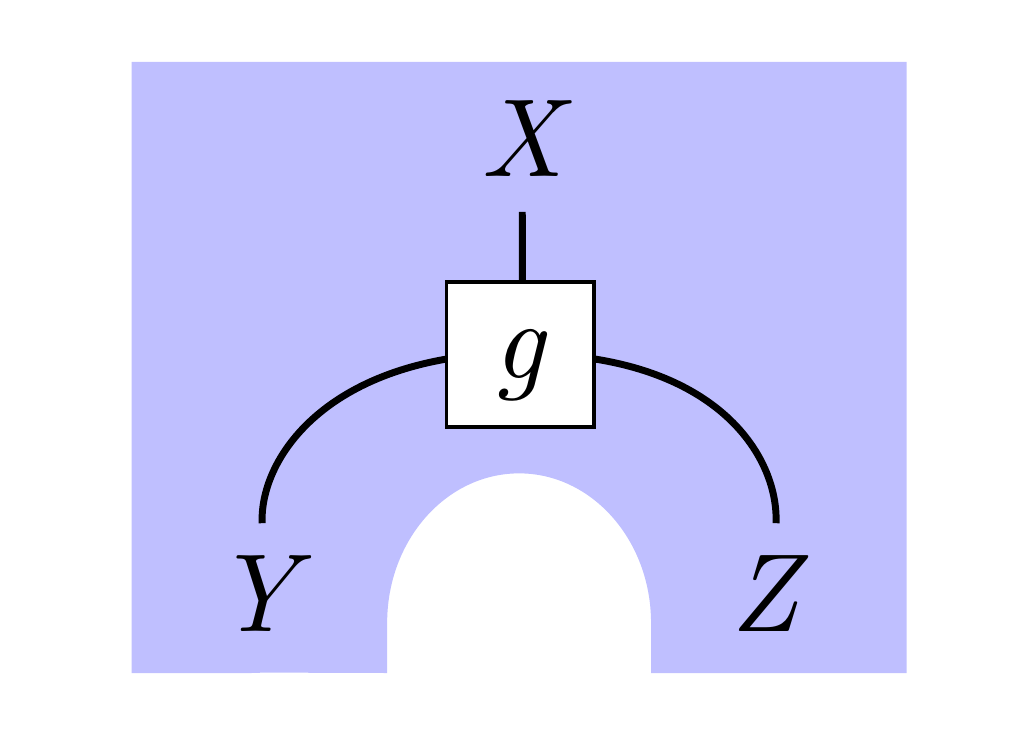}
\end{equation*}
 
This way, together with associativity and coassociativity, one can form functions to and from arbitrary products of random variables. 
 

Whenever we have an internal structure, like an internal monoid or group, this way we can extend the operations on the random elements, via convolution. For example, if $X$ is a monoid, then also $PX$ becomes a monoid, using $PX\otimes PX\to P(X\otimes X)\to PX$ for the multiplication. The analogous statements apply for coalgebraic structures. In other words, the bimonoidal structure allows to have an \emph{algebra and coalgebra of random variables} whenever the deterministic variables form an internal algebraic structure. 
For a concrete example, if as monoid we take the real line with addition, as convolution algebra we get the usual convolution of probability measures. We notice that such a convolution algebra is a monoid (with the neutral element given by the Dirac delta at zero), but \emph{not} a group: only the \emph{monoid} structure is inherited, in general.

\subsection{The category of random elements}\label{probc}

In the literature, many categorical treatments of statistical dependence work in categories whose objects are probability spaces, or fixed probability measures on a space, rather than categories with a probability monad \cite{franz,simpson-independence}. One can form probability spaces from a probability monad in a canonical way:

\begin{deph}
 Let $\cat{C}$ be a category with terminal object $1$ and $P$ a probability monad on $\cat{C}$. Then the category $\cat{Prob(C)}$ is defined to be the co-slice category $1/P$. In other words:
 \begin{itemize}
  \item Objects of $\cat{Prob(C)}$ are objects $X$ of $\cat{C}$ together with arrows $1\to PX$ of $\cat{C}$;
  \item Morphisms of $\cat{Prob(C)}$ are maps $f:X\to Y$ of $\cat{C}$ which make the diagram
  \begin{equation*}
   \begin{tikzcd}[column sep=tiny]
    & 1 \ar{dl} \ar{dr} \\
    PX \ar{rr}{Pf} && PY 
   \end{tikzcd}
  \end{equation*}
  commute.
 \end{itemize}
\end{deph}

In analogy with the category of elements, we can interpret $\cat{Prob(C)}$ as a \emph{category of random elements}, or of probability spaces. The objects can be interpreted as elements of $PX$, i.e.~probability measures on $X$, and the morphisms can be interpreted as maps preserving the selected element in the space of measures, i.e.~measure-preserving maps.

Under some mild assumptions, if $\cat{C}$ has a semicartesian monoidal structure we can transfer that structure to the category of random elements, with a construction analogous to that of Section \ref{rv}.

\begin{deph}\label{tensprobc}
 Let $\cat{C}$ be a semicartesian monoidal category and $P$ an affine probability monad on $\cat{C}$ with monoidal structure $\nabla$. We define the following monoidal structure on $\cat{Prob(C)}$: given $p:1\to PX$ and $q:1\to PY$, we define $p\otimes_\nabla q:1\to P(X\otimes Y)$ to be the composition:
 \begin{equation*}
  \begin{tikzcd}
   1\cong 1\otimes 1 \ar{r}{p\otimes q} & PX \otimes PY \ar{r}{\nabla} & P(X\otimes Y) .
  \end{tikzcd}
 \end{equation*}
 and for morphisms we proceed analogously. 
\end{deph}

This way $(\cat{Prob(C)},\otimes_\nabla)$ is a semicartesian monoidal category, with the unit $1\to 1$ isomorphic to the terminal object. In particular, it is always a tensor category with projections in the sense of \cite{franz}, generalizing the construction given in Section 3.1 therein (in which the base category $\cat{Meas}$ is cartesian monoidal).
In general (and in all interesting cases in the literature), $\cat{Prob(C)}$ equipped with this monoidal structure is \emph{not cartesian monoidal}, not even if $\cat{C}$ is: the product probability does not satisfy the universal property of a categorical product (see for example~\cite{franz} for a discussion on this).\footnote{The intuitive idea that ``the product probability has the same information as the pair of marginals'' can be made rigorous in a different manner, see Section~\ref{independence}.}

Some of the upcoming results will refer to $\cat{Prob(C)}$, whose objects we also call \emph{laws}, as they generalize laws of random variables. In particular we will use the notation $p\otimes_\nabla q$ for the product probability.

\subsection{Bimonoidal monads on a cartesian monoidal category}\label{strength}

Suppose now that the monoidal structure of $\cat{C}$ is \emph{cartesian} monoidal, i.e.\ that the monoidal product is given by the categorical product (so, in particular, $\cat{C}$ is semicartesian). The projection maps $\pi_1:X\times Y \to X$ and $\pi_2:X\times Y \to Y$ now satisfy a universal property. Let's now apply $P$, so that we get maps $P\pi_1:P(X\times Y) \to PX$ and $P\pi_2:P(X\times Y) \to PY$. By the universal property of the product, there is then a \emph{unique} map $P(X\times Y)\to PX\times PY$ compatible with the projections, i.e.\ making the following diagram commute:
\begin{equation*}
 \begin{tikzcd}
  & P(X\times Y) \ar{dl}[swap]{P\pi_1} \ar{dr}{P\pi_2} \uni{d} \\
  PX & PX \times PY \ar{l}{\pi_1} \ar{r}[swap]{\pi_2} & PY
 \end{tikzcd}
\end{equation*}
This gives a natural map $\Delta:P(X\times Y)\to PX\times PY$. Such a map exists and is unique for any (finite) number of factors, so it is automatically associative. Therefore $P$ has a canonical opmonoidal structure. This is true for all functors $P$ between cartesian monoidal categories. Moreover, this opmonoidal structure is unique, due to naturality,
\begin{equation*}
\begin{tikzcd}
	P(X\times Y) \ar{d} \ar{r}{\Delta_{X,Y}} & PX \times PY \ar{d} \\
	P(X\times 1) \ar{r}{\Delta_{X,1} = 1_X} & PX
\end{tikzcd}
\end{equation*}
Suppose now that $P$ in addition has a (given) monoidal structure $\nabla$. By the universal property of the product, it is straightforward to see that the bimonoid diagram \eqref{braidcond} commutes automatically. Therefore, whenever $\cat{C}$ is cartesian monoidal, it suffices to have a monoidal structure to obtain a bimonoidal structure:

\begin{prop}
In a cartesian monoidal category, a bimonoidal monad is the same structure as a monoidal monad.
\end{prop}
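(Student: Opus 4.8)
The plan is to exhibit the forgetful passage from bimonoidal monads to monoidal monads as a bijection on structures, with the inverse supplied by the canonical opmonoidal structure described just above. A bimonoidal monad has in particular a monoidal structure $\nabla$ together with $\delta$ and $E$ monoidal, so forgetting the opmonoidal data $\Delta$ always yields a monoidal monad. The real content is to show that this forgetting loses no information: the opmonoidal structure of any bimonoidal monad is forced to be the canonical $\Delta = \langle P\pi_1, P\pi_2 \rangle$, and conversely that adjoining this canonical $\Delta$ to an arbitrary monoidal monad always produces a valid bimonoidal monad.

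First I would record the uniqueness. Any opmonoidal structure $\Delta$ on $P$ must satisfy the counitality conditions, and combining counitality with the naturality square displayed just before the proposition forces $\pi_1 \circ \Delta = P\pi_1$ and, symmetrically, $\pi_2 \circ \Delta = P\pi_2$. By the universal property of the product this pins $\Delta$ down to the canonical pairing $\langle P\pi_1, P\pi_2 \rangle$. Hence there is at most one opmonoidal structure, and the round trip (forget $\Delta$, then re-adjoin the canonical one) returns the original bimonoidal monad.

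It then remains to check that the canonical $\Delta$ does assemble into a bimonoidal monad when added to any monoidal monad. Coassociativity and counitality of $\Delta$, and the triviality of the unit-counit conditions (which only involve maps into the terminal object $1$), were already observed; the distributivity law \eqref{braidcond} holds by the universal property, since both sides become equal after postcomposition with each projection and an application of naturality of $\nabla$. The remaining obligations are that $\delta$ and $E$ are opmonoidal natural transformations. Each of these is an equality of maps into a product, so by the universal property it suffices to postcompose with $\pi_1$ and $\pi_2$; using naturality of $\delta$ (respectively $E$) together with the identities $\pi_i \circ \Delta = P\pi_i$ already established, each side collapses to $\delta \circ \pi_i$ (respectively $E \circ PP\pi_i$), and the two agree.

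The arguments are all of the same flavour and individually routine; the one point deserving care --- the main, if mild, obstacle --- is the uniqueness step, namely verifying that counitality together with naturality really does determine $\Delta$ completely, since this is precisely what makes the two notions \emph{coincide} rather than merely being related by a one-sided forgetful assignment. Everything else is a mechanical reduction to the product projections $\pi_1$ and $\pi_2$.
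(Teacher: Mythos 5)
Your proposal is correct and takes essentially the same route as the paper, which proves this result in the prose of Section~\ref{strength}: the canonical $\Delta$ induced by the product projections is the unique possible opmonoidal structure (your derivation of $\pi_i\circ\Delta = P\pi_i$ from counitality and naturality is exactly the paper's uniqueness remark via the square with $\Delta_{X,1}$), and the bimonoidality diagram~\eqref{braidcond} commutes automatically by the universal property of the product together with naturality of $\nabla$. You merely spell out the verifications the paper labels ``straightforward'' --- in particular the explicit check, by postcomposition with $\pi_1,\pi_2$, that $\delta$ and $E$ are opmonoidal for the canonical $\Delta$ --- which is a welcome level of detail but not a different argument.
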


In particular, since a monoidal structure is equivalent to a commutative strength (see \cite{kock} for the closed monoidal case, and \cite[Appendix A4]{logrelations} for the general case), a commutative strong monad on a cartesian monoidal category is automatically bimonoidal in a unique way.
This is what happens, for example, for the probabilistic powerdomain on the category of domains. However, not all bimonoidal probability monads arise in this way.
In Section \ref{bmp}, we will give an example of a bimonoidal probability monad on a non-cartesian monoidal category, the Kantorovich monad on complete metric spaces.

\section{Stochastic independence}\label{independence}

Our framework allows to give a formal definition of stochastic dependence and independence in categorical terms, closely related to other notions appearing in the literature \cite{franz,simpson-independence}.

First of all, we look at an important consequence of the bimonoidality condition \eqref{braidcond}: stochastic \emph{dependence} can only be forgotten, not created. 
Consider two spaces $X$ and $Y$. Then given a joint distribution $r\in P(X\otimes Y)$, we can form the marginals $r_X\in PX$ and $r_Y\in PY$. If we try to form a joint again, via the product, the correlation is lost. Vice versa, instead, if we have two marginals, form their joint, and then divide them again into marginals, we expect to get our initial random variables back. Graphically:
\begin{equation}\label{corrforget}
 \includegraphics[align=c,scale=0.35,keepaspectratio=true]{.//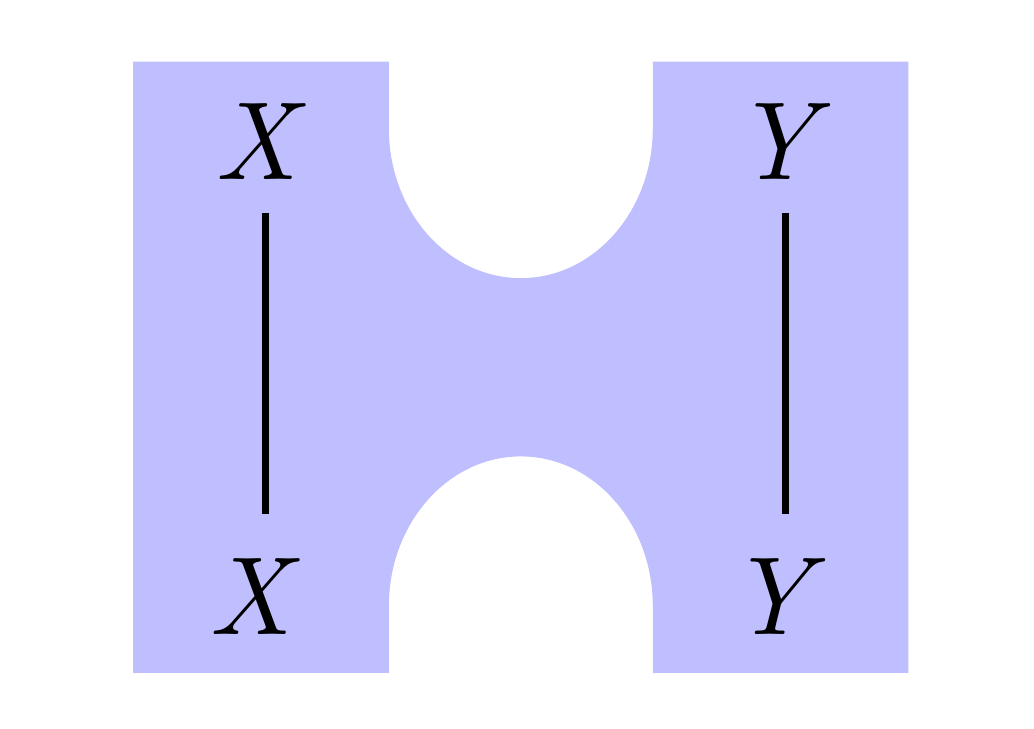}
 = 
 \includegraphics[align=c,scale=0.35,keepaspectratio=true]{.//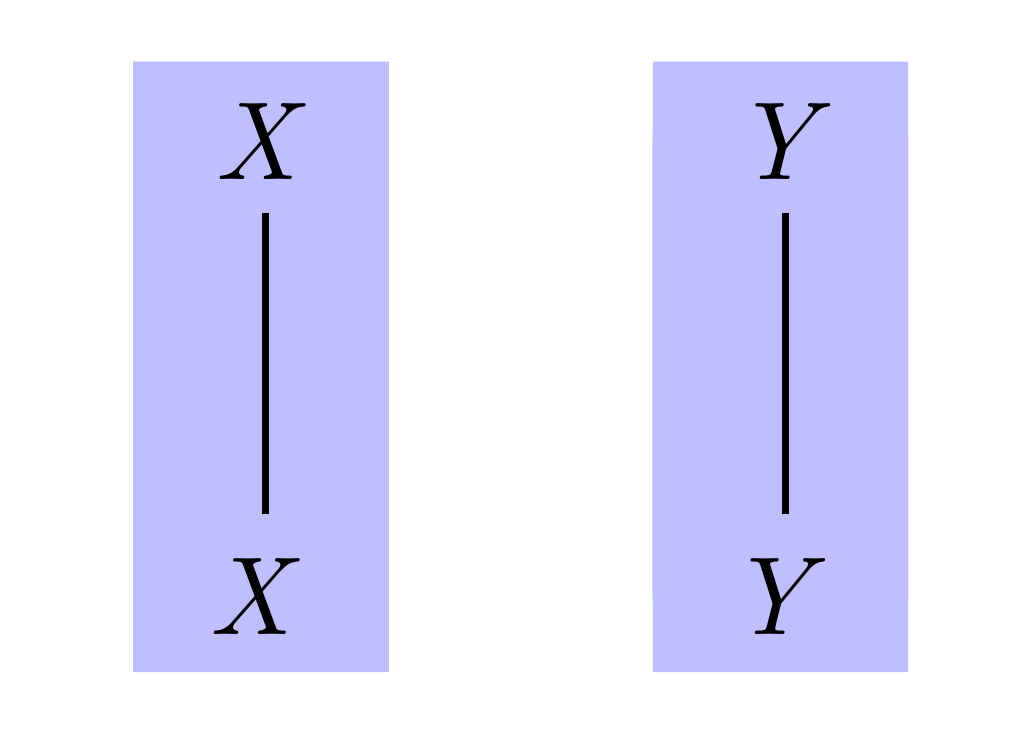}
\end{equation}
This is indeed the case under the assumptions that we've made so far:

\begin{prop}\label{correlationloss}
 Let $X,Y$ be objects of a symmetric semicartesian monoidal category $\cat{C}$. Let $P:\cat{C}\to \cat{C}$ be a bimonoidal endofunctor, with $P(1)\cong 1$. Then $\Delta\circ\nabla= \id_{PX\otimes PY}$. 
In particular, $PX\otimes PY$ is a retract of $P(X\otimes Y)$.
\end{prop}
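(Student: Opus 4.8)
The plan is to obtain $\Delta \circ \nabla = \id_{PX \otimes PY}$ as a direct specialization of the single distributivity axiom~\eqref{braidcond}. The observation driving the argument is that~\eqref{braidcond} is really a statement about four tensor factors, and that its right-hand side contains a braiding in the center; if I arrange for the two objects whose $P$-images are braided there to be the monoidal unit $1$, then that central braiding becomes a braiding of $P(1)$ with $P(1)$, which is trivial. I would therefore read~\eqref{braidcond} with the two ``inner'' arguments set to $1$ and the two ``outer'' arguments set to $X$ and $Y$.

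Having done so, I would simplify both sides. On the branch that contains the merge-then-split pattern, the inner braiding disappears (since $P(1) \cong 1$ by the hypothesis $P(1)\cong 1$, and the braiding of the unit with itself is the identity), and what survives is exactly one copy of $\nabla$ immediately followed by one copy of $\Delta$ on the nontrivial legs, that is, $\Delta_{X,Y} \circ \nabla_{X,Y}$; the remaining legs only ever involve $P(1) \cong 1$ and are absorbed using the unitality of $\nabla$ and the counitality of $\Delta$. On the other branch, the same unitality and counitality conditions---now applied to the legs carrying the unit---collapse the entire composite to $\id_{PX \otimes PY}$. Equating the two simplified expressions gives the claim.

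The step I expect to be the main obstacle is not conceptual but a matter of coherence bookkeeping: after inserting the unit one must check that the canonical isomorphisms $X \otimes 1 \cong X$, $1 \otimes Y \cong Y$, $P(X \otimes 1) \cong PX$ and $P(1) \cong 1$ line up correctly with the unitality of $\nabla$ and the counitality of $\Delta$, so that the two branches really do reduce to $\Delta_{X,Y} \circ \nabla_{X,Y}$ and to $\id_{PX \otimes PY}$ respectively, and that no stray braiding survives. This overhead essentially vanishes in the strict symmetric graphical calculus of Section~\ref{graphicalcalculus}: there the unit wires are drawn as nothing, the braiding between two unit regions is literally empty, and the argument reduces to the visual fact that merging two shaded wires and then splitting them again returns the two parallel wires unchanged.

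Finally, the retract statement requires no further work: once $\Delta \circ \nabla = \id_{PX \otimes PY}$ is established, $\nabla$ is by definition a section and $\Delta$ a retraction, which exhibits $PX \otimes PY$ as a retract of $P(X \otimes Y)$.
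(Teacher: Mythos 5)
Your proposal is correct and takes essentially the same route as the paper's own proof: the paper likewise specializes the bimonoidality condition \eqref{braidcond} by setting the two inner tensor factors to $1$, uses $P(1)\cong 1$ to trivialize the central braiding and the unit legs (this is flagged there as ``the crucial step''), and then invokes the unit and counit conditions to reduce one branch to $\Delta\circ\nabla$ and the other to $\id_{PX\otimes PY}$, carried out in the strict graphical calculus exactly as you anticipate. The retraction statement follows immediately in both treatments.
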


The proposition above is proved graphically in Appendix \ref{proofcl}. It is a special case of a standard result about the so-called \emph{normal} bimonoidal functors, which can be found for example in \cite[Section 3.5]{monoidal}. 

In general we do \emph{not} get any condition $\nabla\circ\Delta= \id_{P(X\otimes Y)}$, i.e.\ in general
\begin{equation}\label{bubble}
 \includegraphics[align=c,scale=0.35,keepaspectratio=true]{.//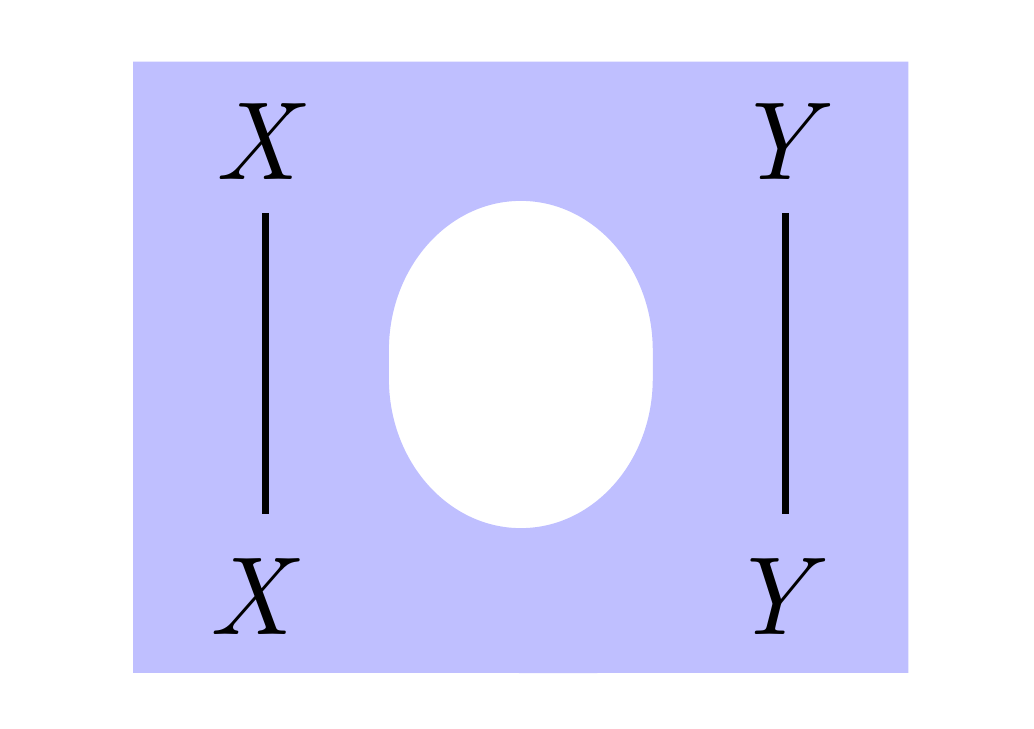}
 \ne
 \includegraphics[align=c,scale=0.35,keepaspectratio=true]{.//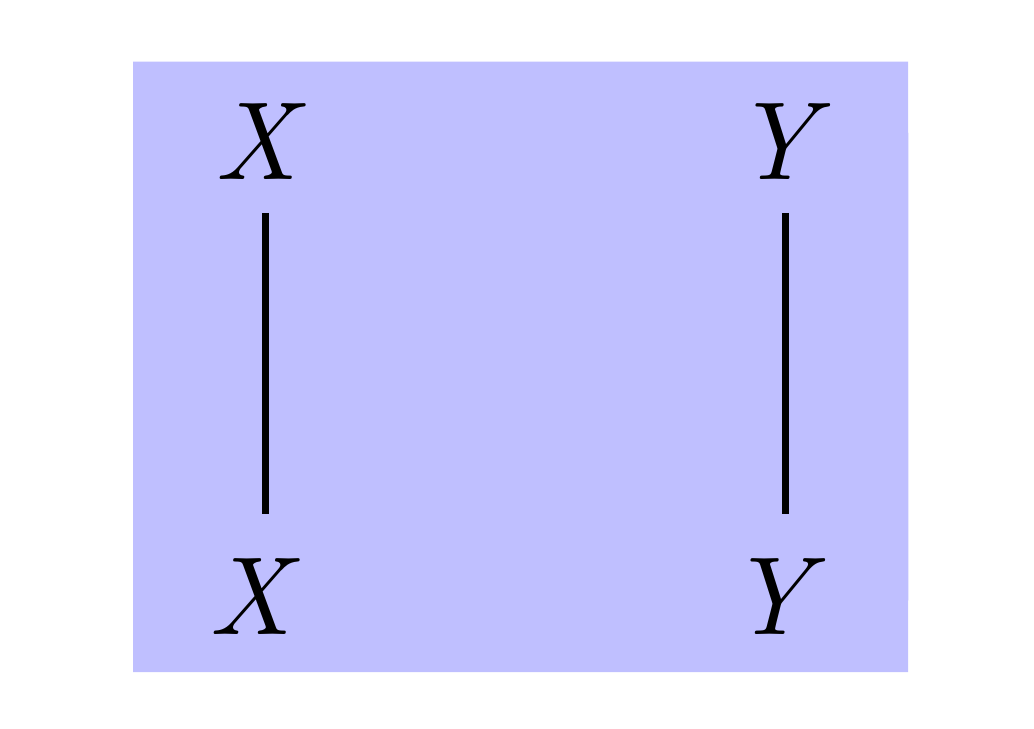}
\end{equation}
An example is given by $X=Y=\{0,1\}$, with a perfectly correlated and uniform distribution. So correlation can be forgotten, but not created, by the bimonoidal structure maps.

Going further, we can use these structures in order to talk about probabilistic independence:

\begin{deph}\label{defindep}
$X$ and $Y$ are \emph{independent} for the law $r:1\to P(X\otimes Y)$ if and only if $\nabla\circ\Delta\circ r=r$.
\end{deph}

That is, applying the left-hand side of \eqref{bubble} gives the same as applying the right-hand side if and only if we have independence. 

We are now ready for the probabilistic interpretation of the bimonoidality condition \eqref{braidcond}, which gives its main motivation: Consider any joints $WX$ and $YZ$, and form their product. In the resulting distribution, $W$ will be independent of $Y$, and $X$ will be independent of $Z$. More rigorously:

\begin{prop}\label{probinterp}
 Let $W,X,Y,Z$ be objects of a symmetric semicartesian monoidal category $\cat{C}$. Let $P:\cat{C}\to \cat{C}$ be a bimonoidal functor, with $P(1)\cong 1$. Let $r:1\to P(W\otimes X)$ and $s:1\to P(Y\otimes Z)$, and consider the law $r\otimes_\nabla s:=\nabla\circ(r\otimes s)$ on $W\otimes X\otimes Y\otimes Z$. Then after forgetting $X$ and $Z$, for the resulting law $W$ and $Y$ are independent. Just as well, after forgetting $W$ and $Y$, for the resulting law $X$ and $Z$ are independent.
\end{prop}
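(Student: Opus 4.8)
The plan is to reduce the statement to Proposition~\ref{correlationloss} via the slogan \emph{the marginal of a product is the product of the marginals}, and then observe that a product law is always independent. To set this up I would first rewrite the operation ``forget $X$ and $Z$'' as a single functor image. Writing $\alpha := \id_W \otimes\, ! : W \otimes X \to W$ and $\beta := \id_Y \otimes\, ! : Y \otimes Z \to Y$ for the two projections coming from the semicartesian structure (composed with the unitors), forgetting $X$ and $Z$ from $W \otimes X \otimes Y \otimes Z$ is exactly the application of $P(\alpha \otimes \beta)$, so the marginalized law is $t := P(\alpha \otimes \beta) \circ \nabla \circ (r \otimes s) : 1 \to P(W \otimes Y)$.

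The key step is the naturality of $\nabla$. Since $\nabla_{A,B} : PA \otimes PB \to P(A \otimes B)$ is natural in both arguments, the naturality square for $\alpha$ and $\beta$ gives $P(\alpha \otimes \beta) \circ \nabla_{W \otimes X,\, Y \otimes Z} = \nabla_{W,Y} \circ (P\alpha \otimes P\beta)$. Precomposing with $r \otimes s$ and using the interchange law to factor $(P\alpha \otimes P\beta) \circ (r \otimes s) = (P\alpha \circ r) \otimes (P\beta \circ s)$, I obtain $t = \nabla_{W,Y} \circ (r_W \otimes s_Y)$, where $r_W := P\alpha \circ r$ and $s_Y := P\beta \circ s$ are the marginals of $r$ on $W$ and of $s$ on $Y$. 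In the graphical calculus this is immediate: the ground wires attached to the $X$- and $Z$-strands slide freely up through the multiplication $\nabla$.

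It then remains to check that any product law $\nabla \circ (p \otimes q)$ is independent in the sense of Definition~\ref{defindep}, and this is precisely where Proposition~\ref{correlationloss} enters. That proposition gives $\Delta \circ \nabla = \id$, so $\nabla \circ \Delta \circ \nabla \circ (p \otimes q) = \nabla \circ (\Delta \circ \nabla) \circ (p \otimes q) = \nabla \circ (p \otimes q)$. Taking $p = r_W$ and $q = s_Y$ yields $\nabla \circ \Delta \circ t = t$, which is exactly the statement that $W$ and $Y$ are independent for $t$. The second assertion---that $X$ and $Z$ are independent after forgetting $W$ and $Y$---follows by the identical argument, now marginalizing along $! \otimes \id$ in each tensor factor.

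I expect the only delicate point to be the bookkeeping: identifying the forgetful map $W \otimes X \otimes Y \otimes Z \to W \otimes Y$ with the tensor $\alpha \otimes \beta$ (up to associators and unitors) and confirming the interchange factorization above. In the strict symmetric monoidal setting underlying the graphical calculus these are routine, so the entire content of the proposition is carried by naturality of $\nabla$ together with the retraction $\Delta \circ \nabla = \id$. It is worth noting that the distributivity axiom~\eqref{braidcond} enters here only indirectly, through Proposition~\ref{correlationloss}.
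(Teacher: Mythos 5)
Your proof is correct, but it takes a genuinely different route from the paper's. The paper's proof attacks the bimonoidality hexagon \eqref{braidcond} head-on: it composes the marginalized law with the left-hand side of \eqref{bubble}, applies \eqref{braidcond} together with affineness of $P$, cancels using \eqref{corrforget}, and then recognizes the result as the other side of \eqref{braidcond} again---so the distributivity axiom is invoked directly, twice. You instead factor the argument into two separate lemmas: (i) by naturality of $\nabla$ with respect to the discard morphisms $\alpha = \id_W \otimes\, !$ and $\beta = \id_Y \otimes\, !$, the pushforward of $\nabla \circ (r \otimes s)$ along the forgetting map is itself a product law $\nabla \circ (r_W \otimes s_Y)$---a step that needs only the lax monoidal structure, not bimonoidality; and (ii) every product law is independent in the sense of Definition~\ref{defindep}, immediate from the retraction $\Delta \circ \nabla = \id$ of Proposition~\ref{correlationloss}. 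Your factorization is more modular and proves something slightly stronger (the entire image of $\nabla$ is fixed by $\nabla \circ \Delta$), and it isolates precisely where \eqref{braidcond} is needed, namely only inside Proposition~\ref{correlationloss}; it is also consonant with how the paper itself verifies \eqref{braidcond} for the Kantorovich monad in Appendix~\ref{proofbimonoidal}, via the Fubini computation that the marginal of a product is the product of the marginals. What the paper's route buys instead is expository: Section~\ref{independence} frames this proposition as \emph{the} probabilistic interpretation of axiom \eqref{braidcond}, so its proof deliberately exhibits the hexagon at work in the graphical calculus rather than hiding it in a lemma. The bookkeeping you flag is indeed routine: in the strict setting $\alpha \otimes \beta$ is literally the forgetting map $W \otimes X \otimes Y \otimes Z \to W \otimes Y$, no braiding is required since $W, Y$ (respectively $X, Z$) stay in order, and your interpretation of ``forgetting'' as pushforward along the unique maps to $1$ matches the paper's.
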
 

A graphical proof in terms of Definition \ref{defindep} is given as well in Appendix \ref{proofcl}.

This result forms part of the semi-graphoid axioms \cite{graphoids} which axiomatize properties of conditional independence, namely in the case where the conditioning is trivial. Concretely, Proposition~\ref{probinterp} corresponds to the axiom of decomposition, stating that if $X$ is independent from $(Y,Z)$, then $X$ is also independent from $Y$. The semi-graphoid axiom of symmetry (if $X$ is independent of $Y$, then $Y$ is independent of $X$) is also satisfied  whenever we have a symmetric bimonoidal monad.

\subsection{Comparison with other notions of independence}\label{comparison}

Franz \cite{franz} defines stochastic independence in a semicartesian monoidal category in the following way: given objects $A,B_1,B_2$ (which one can think of as probability spaces), and arrows $f_1:A\to B_1$ and $f_2:A\to B_2$ (which one can think of measure-preserving maps), then $f_1$ and $f_2$ are independent if and only if there exists $h:A\to B_1\otimes B_2$ making this diagram commute:
\begin{equation}
 \begin{tikzcd}
  & A \ar[swap]{dl}{f_1} \ar{d}{h} \ar{dr}{f_2}\\
  B_1 & B_1\otimes B_2 \ar{l}{\pi_1} \ar[swap]{r}{\pi_2} & B_2
 \end{tikzcd}
\end{equation}
where $\pi_1,\pi_2$ are the projections of the tensor product. He then proves \cite[Proposition 3.5]{franz} that in the category $\cat{Prob}$ of (traditional) probability spaces, this notion of independence is equivalent to the standard one of probability theory. 
We propose a generalization of that result, which holds for categories of random elements obtained by generic \emph{cartesian} monoidal categories.

\begin{prop}\label{equivalence}
 Let $\cat{C}$ be a cartesian monoidal category and $P$ an affine bimonoidal probability monad.
 Consider a law $s:1\to PA$ and maps $f_1:A\to B_1$ and $f_2:A\to B_2$. Then $f_1$ and $f_2$ are independent in the sense of Franz \cite{franz} if and only if $B_1$ and $B_2$ are independent for the law $P(f_1,f_2)\circ s$ in the sense of Definition \ref{defindep}.
\end{prop}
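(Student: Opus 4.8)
The plan is to show that, with the laws on $B_1$ and $B_2$ fixed in the only way that makes $f_1,f_2$ morphisms of $\cat{Prob(C)}$, both notions of independence unwrap to the single equation $P(f_1,f_2)\circ s = t_1 \otimes_\nabla t_2$, where $t_i := Pf_i\circ s$ is the pushforward law on $B_i$.

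First I would make Franz's condition explicit. Since $\cat{C}$ is cartesian monoidal, $B_1\otimes B_2 = B_1\times B_2$, and the universal property of the product forces any $\cat{C}$-morphism $h$ with $\pi_1\circ h = f_1$ and $\pi_2\circ h = f_2$ to be the pairing $(f_1,f_2)$. In $\cat{Prob(C)}$ the tensor object $B_1\otimes B_2$ carries the product law $t_1\otimes_\nabla t_2$ by Definition \ref{tensprobc}, and the structural projections $\pi_i$ are automatically measure-preserving: combining the cartesian identity $\pi_i\circ\Delta = P\pi_i$ with Proposition \ref{correlationloss} ($\Delta\circ\nabla = \id$) gives $P\pi_i\circ(t_1\otimes_\nabla t_2) = t_i$. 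Hence the triangle in Franz's definition can commute in $\cat{Prob(C)}$ if and only if the forced candidate $(f_1,f_2)$ is itself a morphism $(A,s)\to(B_1\times B_2, t_1\otimes_\nabla t_2)$, i.e.\ if and only if $P(f_1,f_2)\circ s = t_1\otimes_\nabla t_2$.

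Next I would unwrap Definition \ref{defindep} for the law $r := P(f_1,f_2)\circ s$. Using the cartesian description $\Delta = \langle P\pi_1, P\pi_2\rangle$ together with functoriality $P\pi_i\circ P(f_1,f_2) = P(\pi_i\circ(f_1,f_2)) = Pf_i$, I get $\Delta\circ r = \langle Pf_1\circ s,\, Pf_2\circ s\rangle = t_1\otimes t_2$, and therefore $\nabla\circ\Delta\circ r = \nabla\circ(t_1\otimes t_2) = t_1\otimes_\nabla t_2$. Thus the condition $\nabla\circ\Delta\circ r = r$ of Definition \ref{defindep} is literally $t_1\otimes_\nabla t_2 = r = P(f_1,f_2)\circ s$, which is exactly the equation extracted from Franz's definition; the equivalence follows.

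The two commuting-diagram verifications are routine, so the only conceptual step is the first paragraph, where ``there exists $h$'' is turned into an equation. I expect the main obstacle to be making that reduction watertight — checking that the law on $B_1\otimes B_2$ really is the product law and that the projections are measure-preserving, so that Franz's diagram carries no hidden degree of freedom beyond the single equation $P(f_1,f_2)\circ s = t_1\otimes_\nabla t_2$. Once the universal property of the cartesian product pins $h$ down to $(f_1,f_2)$, everything else is forced and the two independence conditions coincide on the nose.
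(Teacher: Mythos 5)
Your proposal is correct and takes essentially the same route as the paper's proof: the universal property of the cartesian product forces $h=(f_1,f_2)$, reducing Franz's condition to the single equation $P(f_1,f_2)\circ s = t_1\otimes_\nabla t_2$, which via the cartesian description of $\Delta$ (and Proposition \ref{correlationloss}) is exactly the condition of Definition \ref{defindep}. Your unconditional identity $\nabla\circ\Delta\circ r = t_1\otimes_\nabla t_2$ and the check that the projections are measure-preserving merely make explicit steps the paper leaves implicit in its two-implication write-up.
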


So in the case of cartesian monoidal base categories, the two approaches agree. The proof can be found in Appendix \ref{proofeq}, and goes along the lines of the proof of \cite[Proposition 3.5]{franz}.

Simpson \cite{simpson-independence} defines an \emph{independence structure} as a certain collection of multispans that contains the singleton families. Given again a \emph{cartesian} monoidal category $\cat{C}$ and an affine monad $P$ on $C$, and given a finite multispan $\{f_i:A\to B_i\}_{i\in I}$ in $\cat{C}$, we can form a multispan in the category $\cat{Prob(C)}$ by precomposing with a law $r:1\to PA$. We can call such a resulting multispan \emph{independent}, in analogy with Definition \ref{defindep}, iff
\begin{equation*}
 \nabla_I \circ \Delta_I\circ P \left( (f_i)_{i\in I}  \right) \circ r = P \left( (f_i)_{i\in I} \right) \circ r ,
\end{equation*}
where $(f_i)_{i\in I}:A\to \prod_{i\in I} B_i$ is the tupling of the $f_i$ given by the cartesian monoidal structure, and $\nabla_I$ and $\Delta_I$ are the maps respectively $\prod_i (PB_i) \to P(\prod_i B_i)$ and $P(\prod_i B_i) \to \prod_i (PB_i)$ obtained by iterating respectively $\nabla$ and $\Delta$ (by associativity and coassociativity, the resulting maps are unique). 
Independent multispans defined in this way form then an independence structure in the sense of \cite[Definition 2.1]{simpson-independence}, in a way analogous to Examples 2.1 and 2.2 therein: they are closed with respect to multispan composition, and to forming subfamilies. Therefore, again in the case of a cartesian monoidal base category, our definition is compatible with Simpson's approach.

\section{Bimonoidal structure of the Kantorovich monad}\label{bmp}

The Kantorovich monad is a probability monad on complete metric spaces. It was first defined by van Breugel for compact and for complete 1-bounded metric spaces \cite{breugel}. We will use here the definitions and results of \cite{ours_kantorovich}, which work for all complete metric spaces. 

Consider the category $\cat{CMet}$ whose:
\begin{itemize}
 \item Objects are complete metric spaces;
 \item Morphisms are short maps, i.e.\ functions $f:X\to Y$ such that 
 \begin{equation*}
  d\big(f(x),f(x')\big) \le d(x,x') 
 \end{equation*}
 for all $x,x'\in X$;
 \item As monoidal structure, we define $X\otimes Y$ to be the set $X\times Y$, with the metric:
 \begin{equation*}
  d\big( (x,y) , (x',y') \big) := d(x,x') + d(y,y') .
 \end{equation*}
\end{itemize}

This category can be thought of as a category of enriched categories and functors \cite[Section 2]{lawvere}, and the monoidal structure is closed but not cartesian.
Further motivation for the choice of this category is given in \cite{ours_kantorovich}. In particular, by choosing as morphisms the short maps, one can obtain $PX$ as a colimit of spaces of empirical distributions of finite sequences \cite[Section 3]{ours_kantorovich}, which would not be possible if one allowed for more general morphisms (like continuous or Lipschitz functions). 

We recall the basic definitions of \cite{ours_kantorovich}. 

\begin{deph}
 Let $X$ be a complete metric space. 
 \begin{itemize}
  \item A Radon probability measure $p$ on $X$ is said to have \emph{finite first moment} if for every short map $f:X\to\R$,
  \begin{equation*}
   \int_X f \, dp < \infty .
  \end{equation*}
  Every such probability measure can be specified uniquely by its integration against short maps to $\R$: the set of such measures can be identified with the set of positive, Scott-continuous linear functionals on the space of Lipschitz functions on $X$. Hence, in the following, we explicitly construct such measures by specifying their action on short maps.
  \item The \emph{Kantorovich-Wasserstein} space $PX$ is the space of all Radon probability measures on $X$ with finite first moment, equipped with the metric:
  \begin{equation*}
   d(p,q) := \sup_{f:X\to\R} \left| \int_X f \, dp - \int_X f \, dq \right|,
  \end{equation*}
  where the supremum ranges over all short maps $X\to\R$.
  With this metric, $PX$ is itself a complete metric space.
  \item Given $f:X\to Y$, we define $Pf:PX\to PY$ as the map assigning to $p\in PX$ its push-forward measure $(Pf)(p):=f_*p\in PY$. The latter is defined by saying that for all $g:Y\to\R$ short,
  \begin{equation*}
   \int_Y g \, d(f_*p) := \int_X g\circ f \, dp .
  \end{equation*}
 $f_*p$ also has finite first moment, and this assignment makes $P$ into a functor.
 \end{itemize}
\end{deph}

A concise treatment of Wasserstein spaces can be found in \cite{hitch} and a more comprehensive one in \cite{villani}. For the basic measure-theoretic setting, we refer the reader to \cite{bogachev,edgar}. 

The functor $P$ admits a monad structure, with the unit $\delta:X\to PX$ given by the Dirac distributions
\begin{equation*}
 \int_X f(y) \, d (\delta(x))(y) := f(x) ,
\end{equation*}
and the multiplication $E:PPX\to PX$ given by forming the expected or average distribution,
\begin{equation*}
 \int_X f \, d (E\mu) := \int_{PX} \left( \int_X f(x) \, dp(x) \right) d\mu(p) .
\end{equation*}

We can now define product joints and marginals, which will equip $P$ with a bimonoidal structure.

\begin{deph}\label{jm}
 Let $p\in PX, q\in PY$. We denote $p\otimes_\nabla q$ the joint probability measure on $X\otimes Y$ defined by:
\begin{equation*}
\int_{X\otimes Y} f(x,y) \, d(p\otimes_\nabla q) (x,y) := \int_{X\otimes Y} f(x,y) \, dp(x)\,dq(y).
\end{equation*}

 Let now $r\in P(X\otimes Y)$. We denote $(r_X)$ the marginal probability on $X$ defined by:
\begin{equation*}
\int_{X} f(x) \, dr_X (x) := \int_{X\otimes Y} f(x) \, dr(x,y).
\end{equation*}
The marginal on $Y$ is defined analogously.
\end{deph}
It is straightforward to check that the functionals defined in Definition \ref{jm} are positive, linear, and Scott-continuous, therefore they specify uniquely Radon probability measures of finite first moment. 

In the rest of this section we will show that the joints and marginals in Definition \ref{jm} equip the Kantorovich monad on $\cat{CMet}$ with a bimonoidal monad structure (Theorem \ref{bimonoidal}). The proofs with the actual calculations are in Appendix \ref{proofs}.

We will prove now that the product joint construction equips $P$ with a monoidal structure.

\begin{deph}
 Let $X,Y\in\cat{CMet}$. We define the map $\nabla:PX\otimes PY \to P(X\otimes Y)$ as mapping $(p,q)\in PX\otimes PY$ to the joint $p\otimes_\nabla q\in P(X\otimes Y)$.
\end{deph}

\begin{prop}\label{nablasm}
 $\nabla:PX\otimes PY \to P(X\otimes Y)$ is short. 
\end{prop}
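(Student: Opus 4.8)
The plan is to unfold both sides of the desired inequality in terms of the explicit Wasserstein metrics and then reduce the product estimate to two one-variable estimates by a triangle inequality. Concretely, the metric on $PX \otimes PY$ is the sum metric $d\big((p,q),(p',q')\big) = d(p,p') + d(q,q')$, so shortness of $\nabla$ amounts to the bound
\begin{equation*}
 \sup_{f} \left| \int_{X\otimes Y} f \, d(p\otimes_\nabla q) - \int_{X\otimes Y} f \, d(p'\otimes_\nabla q') \right| \;\le\; d(p,p') + d(q,q'),
\end{equation*}
where the supremum runs over all short $f : X\otimes Y \to \R$. I would fix such an $f$ and insert the intermediate measure $p' \otimes_\nabla q$, splitting the integrand difference into a part that only moves the first factor ($p \to p'$, with $q$ held fixed) and a part that only moves the second factor ($q \to q'$, with $p'$ held fixed).

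The key lemma I would isolate is that partial integration of a short function yields a short function: for fixed $f$, the map $g(x) := \int_Y f(x,y)\, dq(y)$ is short on $X$, and likewise $h(y) := \int_X f(x,y)\, dp'(x)$ is short on $Y$. This follows immediately from the definition of the sum metric on $X \otimes Y$: since $|f(x,y) - f(x',y)| \le d(x,x')$ for every fixed $y$, integrating against $q$ preserves the Lipschitz bound, as the averaging is over the held-fixed variable. Because $g$ and $h$ are short and $p, q$ have finite first moment, the relevant integrals are finite, and by Fubini (which is precisely how $p \otimes_\nabla q$ is defined in Definition \ref{jm}) each of the two pieces becomes a one-variable integral difference:
\begin{equation*}
 \int f \, d(p\otimes_\nabla q) - \int f \, d(p'\otimes_\nabla q) = \int_X g \, dp - \int_X g \, dp',
\end{equation*}
and analogously for the second piece with $h$, $q$, and $q'$. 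Each of these is bounded in absolute value by $d(p,p')$ respectively $d(q,q')$ directly from the definition of the Wasserstein metric, since $g$ and $h$ are admissible test functions. The triangle inequality then yields the desired bound for the fixed $f$, and taking the supremum over $f$ completes the argument.

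The main obstacle is the careful justification of the interchange of integration order implicit in the splitting, i.e.\ making Fubini's theorem applicable for Radon measures of finite first moment on the metric spaces at hand. Since Definition \ref{jm} specifies $p \otimes_\nabla q$ through its action on short functions as the iterated integral, this is largely a matter of confirming that the iterated integrals agree and define the stated functional; the shortness and finite-moment bookkeeping needed to ensure integrability is routine. Everything else is a formal consequence of the sum-metric structure and the Kantorovich dual formula for the metric on $P$.
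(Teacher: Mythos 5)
Your proposal is correct and follows essentially the same route as the paper's proof: the same insertion of the intermediate measure $p'\otimes_\nabla q$ to split the difference, and the same key lemma (the paper's Proposition \ref{partintsm}) that the partial integral of a short function against a probability measure is short, so that $g$ and $h$ are admissible test functions in the Kantorovich dual formula. The only cosmetic difference is that the paper phrases the splitting at the level of signed measures, $(p-p')\otimes q + p'\otimes_\nabla(q-q')$, rather than fixing $f$ first, but the estimates are identical.
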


Therefore, $\nabla$ is a morphism of $\cat{CMet}$. This would not be the case if we took as monoidal structure for $\cat{CMet}$ the cartesian product: for the product metric, $\nabla$ is Lipschitz, but in general not short.
The fact that $\nabla$ equips $P$ with a monoidal structure now follows directly from the naturality and associativity of the product probability construction (as sketched in Section \ref{pmon}). In other words, the proofs of the next three statements (see Appendix \ref{proofsmonoidal}) can be adapted to most other categorical contexts in which the map $\nabla$ is of a similar form.

\begin{prop}\label{nablanat}
 $\nabla:PX\otimes PY \to P(X\otimes Y)$ is natural in $X$ and $Y$.
\end{prop}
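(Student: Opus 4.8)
The plan is to reduce the naturality statement to a single identity between measures and then verify it by integrating against short test maps. Naturality in $X$ and $Y$ asserts that, for every pair of short maps $g\colon X\to X'$ and $h\colon Y\to Y'$, the square
\begin{equation*}
\begin{tikzcd}
PX\otimes PY \ar{r}{\nabla} \ar[swap]{d}{Pg\otimes Ph} & P(X\otimes Y) \ar{d}{P(g\otimes h)} \\
PX'\otimes PY' \ar{r}{\nabla} & P(X'\otimes Y')
\end{tikzcd}
\end{equation*}
commutes (this can equally well be checked one variable at a time, but the joint version is no harder). Chasing an element $(p,q)\in PX\otimes PY$ around both paths, commutativity is exactly the assertion that the push-forward of a product measure is the product of the push-forwards:
\begin{equation*}
P(g\otimes h)\bigl(p\otimes_\nabla q\bigr) = (g_*p)\otimes_\nabla(h_*q).
\end{equation*}

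Next I would invoke the structural fact recalled before Definition \ref{jm}: a Radon measure of finite first moment is determined uniquely by its integrals against short maps to $\R$. Hence it suffices to fix an arbitrary short $f\colon X'\otimes Y'\to\R$ and show that both measures above integrate $f$ to the same value. On the left, the definition of push-forward rewrites $\int f\,dP(g\otimes h)(p\otimes_\nabla q)$ as $\int f\circ(g\otimes h)\,d(p\otimes_\nabla q)$, which is legitimate because $g\otimes h$, and hence $f\circ(g\otimes h)$, is short; Definition \ref{jm} then turns this into the iterated integral $\int_X\int_Y f\bigl(g(x),h(y)\bigr)\,dq(y)\,dp(x)$. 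On the right, Definition \ref{jm} first writes $\int f\,d\bigl((g_*p)\otimes_\nabla(h_*q)\bigr)$ as $\int_{X'}\int_{Y'} f(x',y')\,d(h_*q)(y')\,d(g_*p)(x')$, and two applications of the push-forward definition replace $x'$ by $g(x)$ and $y'$ by $h(y)$, producing the same iterated integral. The two sides therefore coincide.

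The only genuine technical point is justifying the manipulation of the iterated integrals, i.e.\ Fubini's theorem for the finite product measure of $p$ and $q$. This is routine here: a short $f$ satisfies $|f(x,y)|\le |f(x_0,y_0)|+d(x,x_0)+d(y,y_0)$, so the finite first moments of $p$ and $q$ make $f$ absolutely integrable against $p\otimes_\nabla q$ and Fubini applies, also guaranteeing that the order of the two integrations may be interchanged. I expect this to be the main (though mild) obstacle; once integrability is secured the equality of the iterated integrals is immediate, and by uniqueness of measures determined by their action on short maps the naturality square commutes.
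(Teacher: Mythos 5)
Your proof is correct and follows essentially the same route as the paper's: both verify the naturality square by chasing $(p,q)$ through to the identity $P(g\otimes h)(p\otimes_\nabla q)=(g_*p)\otimes_\nabla(h_*q)$ and checking it against short test maps $f\colon X'\otimes Y'\to\R$, using that such measures are determined by their integrals against short maps. The only differences are cosmetic or supplementary: the paper treats one variable at a time (invoking symmetry) where you treat both at once, and you make explicit the Fubini/integrability justification (via finite first moments and the bound $|f(x,y)|\le|f(x_0,y_0)|+d(x,x_0)+d(y,y_0)$) that the paper leaves implicit in its Definition of $p\otimes_\nabla q$.
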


\begin{prop}\label{lmf}
 $(P,\id_1,\nabla)$ is a symmetric lax monoidal functor $\cat{CMet}\to\cat{CMet}$.
\end{prop}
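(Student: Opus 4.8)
The functor $P$ together with $\nabla$ is already known to be short (Proposition \ref{nablasm}) and natural (Proposition \ref{nablanat}), and affineness forces the unit map $\id_1 : 1 \to P(1) \cong 1$ to be the identity. Hence the plan is to verify the three remaining coherence axioms of a symmetric lax monoidal functor --- associativity, left and right unitality, and compatibility with the braiding --- each as an equality of two Radon measures of finite first moment. For this I would exploit the characterization recalled after Definition \ref{jm}: such a measure is determined uniquely by its integrals against short maps to $\R$. Thus every coherence diagram reduces to checking that two prescriptions agree when integrated against an arbitrary short test function, and each such check is an instance of Fubini's theorem.

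For associativity, note first that the associator of $\cat{CMet}$ is the identity on underlying sets and an isometry for the sum metric, so the condition reduces to the equality of measures
\begin{equation*}
 (p\otimes_\nabla q)\otimes_\nabla r \;=\; p\otimes_\nabla (q\otimes_\nabla r)
\end{equation*}
on $X\otimes Y\otimes Z$, for all $p\in PX$, $q\in PY$, $r\in PZ$. Testing against a short $f:X\otimes Y\otimes Z\to\R$ and unfolding Definition \ref{jm} twice, both sides are seen to equal the iterated integral $\int f(x,y,z)\,dp(x)\,dq(y)\,dr(z)$; the two groupings agree by Fubini. The same template handles unitality: by affineness $P(1)$ is a single point carrying the trivial measure, and forming the product of $p$ with this measure and testing against any short $f$ returns $\int f\,dp$, so the result is $p$ itself under the unitor, and symmetrically on the other side.

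For the symmetric (braided) condition, let $\beta_{X,Y}:X\otimes Y\to Y\otimes X$ denote the swap. One must show $\nabla_{Y,X}\circ\beta_{PX,PY} = P(\beta_{X,Y})\circ\nabla_{X,Y}$. Testing both composites against a short $g:Y\otimes X\to\R$ and applying Definition \ref{jm}, each returns $\int g(y,x)\,dq(y)\,dp(x)$, and the two agree once more by Fubini. Together with the associativity and unitality just discussed, this establishes that $(P,\id_1,\nabla)$ is a symmetric lax monoidal functor.

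The genuine content throughout is Fubini's theorem, so the main obstacle is not the algebra but justifying the interchange of integrals within the present category. The crucial point --- which is exactly what underlies Proposition \ref{nablasm} --- is that for the sum metric a short function $f(x,y)$ is short in each variable separately, so its partial integral $x\mapsto\int f(x,y)\,dq(y)$ is again short and of finite first moment; this guarantees that the relevant iterated integrals exist and may be exchanged, and that each intermediate measure (such as $p\otimes_\nabla q$) is itself Radon of finite first moment, as already verified after Definition \ref{jm}, so that the uniqueness-by-short-maps characterization legitimately applies at every step.
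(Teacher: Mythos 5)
Your proposal is correct and takes essentially the same route as the paper: the paper's proof likewise reduces the statement to the coherence diagrams, dismissing unitality as trivial (the unitor being the identity at the terminal object) and leaving associativity of $(p\otimes_\nabla q)\otimes_\nabla r = p\otimes_\nabla(q\otimes_\nabla r)$ and the symmetry condition as ``a simple calculation.'' Your test-function verification via Fubini, with the shortness of partial integrals (the paper's Proposition \ref{partintsm}) justifying the interchange and the finite-first-moment hypotheses, is precisely that omitted calculation carried out carefully.
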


\begin{prop}\label{mmonad}
 $(P,\delta,E)$ is a symmetric monoidal monad.
\end{prop}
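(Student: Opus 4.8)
The plan is to reduce the statement to two compatibility conditions, since Proposition~\ref{lmf} has already established that $(P,\id_1,\nabla)$ is a symmetric lax monoidal functor. A symmetric monoidal monad is precisely such a functor together with the requirement that the monad unit $\delta$ and multiplication $E$ be \emph{monoidal natural transformations}; the coherence conditions involving the unit object are automatic here, because $P$ is affine and every map $1\to P1\cong 1$ is forced to be the identity. Thus it suffices to verify the two identities
$\nabla_{X,Y}\circ(\delta_X\otimes\delta_Y)=\delta_{X\otimes Y}$
and
$E_{X\otimes Y}\circ(P\nabla)\circ\nabla_{PX,PY}=\nabla_{X,Y}\circ(E_X\otimes E_Y)$.
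The symmetry of the resulting monoidal monad is then inherited directly from the symmetry of $\nabla$ already proved in Proposition~\ref{lmf}.

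Throughout I would exploit the defining feature of the Kantorovich monad recalled above: a Radon measure of finite first moment is determined uniquely by its integrals against short maps to $\R$. Hence each identity of measures reduces to an identity of real numbers, obtained by integrating an arbitrary short $f$ against both sides and unwinding the explicit formulas of Definition~\ref{jm} together with the definitions of $\delta$ and $E$. For the $\delta$-condition this is immediate: integrating a short $f:X\otimes Y\to\R$ against $\delta_x\otimes_\nabla\delta_y$ gives, by Definition~\ref{jm}, the value $\int f(x',y')\,d\delta_x(x')\,d\delta_y(y')=f(x,y)$, which is exactly $\int f\,d\delta_{(x,y)}$; since this holds for every short $f$, the two measures coincide.

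For the $E$-condition I would again integrate a short $f:X\otimes Y\to\R$ against both composites, with $\mu\in PPX$ and $\nu\in PPY$. The right-hand side unfolds, via the definition of $E$ and of the product joint, to the iterated integral $\int_{PY}\int_{PX}\int_Y\int_X f(x,y)\,dp(x)\,dq(y)\,d\mu(p)\,d\nu(q)$. On the left-hand side one first forms the product measure $\mu\otimes_\nabla\nu$ on $PX\otimes PY$, pushes it forward along $\nabla$ (the effect of $P\nabla$), and then averages via $E$; unwinding these three steps produces the same quadruple integral, once the order of integration is rearranged. The crux of the argument, and the step I expect to be the main obstacle, is justifying that these two iterated integrals agree, i.e.\ justifying the interchange of the integrations. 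This is precisely where the finite-first-moment hypothesis does the work: it guarantees that the inner integrals are finite and that the integrand is integrable against the relevant product measures, so that the Fubini--Tonelli theorem applies and the orders of integration may be swapped freely. Once the interchange is secured, both composites reduce to the same expression, the square commutes, and the proof is complete.
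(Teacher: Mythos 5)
Your proposal is correct and follows essentially the same route as the paper's proof: the paper likewise reduces the statement to showing that $\delta$ and $E$ are monoidal natural transformations (the unit conditions being trivial by affineness), verifies $\delta_x\otimes_\nabla\delta_y=\delta_{(x,y)}$ pointwise, and checks the $E$-square by integrating an arbitrary short map against both composites, where the induced multiplication on $P\circ P$ is exactly your $(P\nabla_{X,Y})\circ\nabla_{PX,PY}$, with symmetry inherited from Proposition~\ref{lmf}. Your explicit appeal to Fubini--Tonelli (backed by the finite-first-moment hypothesis) simply makes precise an interchange of iterated integrals that the paper performs implicitly in its chain of equalities.
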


We know that a monoidal monad is the same as a commutative monad, and therefore obtain:

\begin{cor}
 $P$ is a commutative strong monad, with strength $X\otimes PY\to P(X\otimes Y)$ given by:
\begin{equation*}
(x,q) \mapsto \delta_x \otimes_\nabla q \in P(X\otimes Y) .
\end{equation*}
\end{cor}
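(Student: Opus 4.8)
The plan is to deduce this directly from Proposition~\ref{mmonad} together with the classical equivalence between symmetric monoidal monads and commutative strong monads, due to Kock~\cite{kock} (see also~\cite[Appendix A4]{logrelations}). Since Proposition~\ref{mmonad} exhibits $(P,\delta,E)$ as a symmetric monoidal monad, this equivalence immediately yields that $P$ is a commutative strong monad; it then remains only to identify the induced strength with the stated formula.

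I would recall how the strength is recovered from the monoidal structure under this equivalence: the strength $t_{X,Y}:X\otimes PY\to P(X\otimes Y)$ is the composite
\begin{equation*}
 \begin{tikzcd}
  X\otimes PY \ar{r}{\delta_X\otimes\id_{PY}} & PX\otimes PY \ar{r}{\nabla} & P(X\otimes Y) .
 \end{tikzcd}
\end{equation*}
Each map here is a morphism of $\cat{CMet}$: the Dirac embedding $\delta_X$ is short (indeed isometric), so $\delta_X\otimes\id_{PY}$ is short, and $\nabla$ is short by Proposition~\ref{nablasm}; hence $t_{X,Y}$ is short, confirming that it is a genuine strength internal to $\cat{CMet}$.

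To read off the explicit expression I would simply unwind this composite. The unit $\delta_X$ sends $x\in X$ to the Dirac measure $\delta_x\in PX$, and $\nabla$ sends a pair $(p,q)$ to the product joint $p\otimes_\nabla q$ of Definition~\ref{jm}. Therefore
\begin{equation*}
 t_{X,Y}(x,q) = \nabla(\delta_x, q) = \delta_x\otimes_\nabla q ,
\end{equation*}
which is exactly the claimed formula.

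Since the statement is a corollary of a standard theorem, I do not anticipate any genuine obstacle; the only point requiring a moment's care is that the \emph{commutativity} of the resulting strong monad corresponds precisely to the \emph{symmetry} of $\nabla$ recorded in Proposition~\ref{lmf}, i.e.\ to the fact that the product joint $p\otimes_\nabla q$ is compatible with swapping the two factors. This symmetry is what upgrades ``strong monad'' to ``commutative strong monad,'' and it is already contained in the symmetric monoidal structure established above, so no further verification is needed.
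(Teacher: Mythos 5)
Your proposal is correct and takes essentially the same route as the paper, which obtains this corollary immediately from Proposition~\ref{mmonad} via the standard equivalence between monoidal monads and commutative strong monads (\cite{kock}, \cite[Appendix A4]{logrelations}), with the strength recovered exactly as your composite $\nabla\circ(\delta_X\otimes\id_{PY})$. One small correction to your closing remark: commutativity of the induced strength is not encoded by the \emph{symmetry} of $\nabla$ from Proposition~\ref{lmf} --- it corresponds to the monoidal \emph{monad} structure itself, i.e.\ to $E$ (and $\delta$) being monoidal natural transformations, which is the content of the paper's remark that a monoidal monad is the same as a commutative monad; the symmetry of $\nabla$ instead reflects compatibility with the braiding, which is automatic for commutative monads on a symmetric monoidal category. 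This inaccuracy does not affect your proof, since the equivalence you invoke already delivers commutativity.
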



We now turn to the analogous statements for the marginals, and show that they equip $P$ with an opmonoidal structure. 

\begin{deph}
 Let $X,Y\in\cat{CMet}$. We define the map $\Delta: P(X\otimes Y) \to PX\otimes PY$ as mapping $r\in P(X\otimes Y)$ to the pair of marginals $(r_X,r_Y)\in PX\otimes PY$.
\end{deph}

\begin{prop}\label{deltasm}
 $\Delta:P(X\otimes Y)\to PX \otimes PY$ is short.
\end{prop}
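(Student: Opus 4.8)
The plan is to unfold the definition of shortness and reduce everything to the Kantorovich dual description of the metric. Writing $\Delta r = (r_X, r_Y)$ and recalling that the monoidal product on $\cat{CMet}$ carries the \emph{additive} metric, the quantity to be controlled is
\begin{equation*}
 d(\Delta r, \Delta r') = d(r_X, r'_X) + d(r_Y, r'_Y),
\end{equation*}
and the goal is to show this is $\le d(r,r')$ for all $r, r' \in P(X\otimes Y)$.

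First I would rewrite each summand using the definition of the metric on $PX$ as a supremum over short maps to $\R$. Since the class of short maps $X\to\R$ is closed under negation, the absolute values can be dropped, giving
\begin{equation*}
 d(r_X, r'_X) = \sup_{f} \left( \int_X f\, dr_X - \int_X f\, dr'_X \right),
\end{equation*}
and similarly for $d(r_Y, r'_Y)$ with short maps $g:Y\to\R$. Fixing $\varepsilon > 0$, I would choose short $f$ and $g$ that come within $\varepsilon$ of realizing these two suprema.

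The key step --- the one that makes the choice of the additive (non-cartesian) monoidal metric essential --- is to combine $f$ and $g$ into the single function $h(x,y) := f(x) + g(y)$ on $X\otimes Y$. Because $d\big((x,y),(x',y')\big) = d(x,x') + d(y,y')$, the triangle inequality gives $|h(x,y) - h(x',y')| \le |f(x)-f(x')| + |g(y)-g(y')| \le d\big((x,y),(x',y')\big)$, so $h$ is short and hence an admissible test function for $d(r,r')$. By the defining property of the marginals together with linearity of the integral, $\int_{X\otimes Y} h\, dr = \int_X f\, dr_X + \int_Y g\, dr_Y$, and likewise for $r'$, so that
\begin{align*}
 \int_{X\otimes Y} h\, dr - \int_{X\otimes Y} h\, dr' &= \left(\int_X f\, dr_X - \int_X f\, dr'_X\right) + \left(\int_Y g\, dr_Y - \int_Y g\, dr'_Y\right) \\
 &\ge d(r_X,r'_X) + d(r_Y,r'_Y) - 2\varepsilon .
\end{align*}
Since $h$ is short, the left-hand side is at most $d(r,r')$; letting $\varepsilon \to 0$ yields $d(r_X,r'_X) + d(r_Y,r'_Y) \le d(r,r')$, which is exactly the desired inequality.

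The main obstacle to watch out for is the naive estimate: bounding each marginal distance separately (testing $r$ against $r'$ with $f\circ\pi_1$ and $g\circ\pi_2$ independently, each of which is short) only gives $d(r_X,r'_X) + d(r_Y,r'_Y) \le 2\,d(r,r')$, which would merely show that $\Delta$ is Lipschitz with constant $2$. The whole point is that a \emph{single} short test function on the product can witness both marginal differences simultaneously, which works precisely because the additive metric keeps $f(x)+g(y)$ short. This is the dual counterpart of the phenomenon already noted after Proposition \ref{nablasm} for $\nabla$: for the cartesian (max) metric the combined function is only $2$-Lipschitz, so one would recover only a Lipschitz bound rather than shortness.
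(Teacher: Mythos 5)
Your proof is correct and follows essentially the same route as the paper's: both reduce $d(r_X,r'_X)+d(r_Y,r'_Y)$ to a supremum over combined test functions $h(x,y)=f(x)+g(y)$, using the fact (the paper's Proposition \ref{sumsm}, which you re-prove inline) that the additive metric on $X\otimes Y$ makes such sums short, and then bound by $d(r,r')$ via Kantorovich duality. Your $\varepsilon$-near-optimizer phrasing is just a cosmetic variant of the paper's direct manipulation $\sup_f(\cdots)+\sup_g(\cdots)=\sup_{f,g}\int\bigl(f(x)+g(y)\bigr)\,d(r-r')$, so no substantive difference remains.
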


Therefore $\Delta$ is a morphism of $\cat{CMet}$. Again, the following statements follow just from the properties of marginals, and their proofs (see Appendix \ref{proofsopmonoidal}) can be adapted to most other categorical contexts provided that $\Delta$ is of a similar form.

\begin{prop}\label{deltanat}
 $\Delta:P(X\otimes Y)\to PX \otimes PY$ is natural in $X,Y$.
\end{prop}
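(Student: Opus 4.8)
The plan is to verify naturality by a direct computation at the level of integrals against short test maps, using the principle (stated in the definition of $P$) that a Radon measure of finite first moment on a complete metric space is uniquely determined by its action on short maps to $\R$. Concretely, naturality in $X$ and $Y$ amounts to the identity
\begin{equation*}
(Pf\otimes Pg)\circ\Delta_{X,Y} = \Delta_{X',Y'}\circ P(f\otimes g)
\end{equation*}
for all short maps $f:X\to X'$ and $g:Y\to Y'$. Since an element of $PX'\otimes PY'$ is a pair $(\mu,\nu)$, it suffices to match the two components separately, and by symmetry I will only treat the first.

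Fix $r\in P(X\otimes Y)$. Applying $\Delta_{X,Y}$ and then $Pf\otimes Pg$ produces, in the first slot, the pushforward $f_*(r_X)$ of the first marginal; applying $P(f\otimes g)$ and then $\Delta_{X',Y'}$ produces, in the first slot, the first marginal $\big((f\otimes g)_* r\big)_{X'}$ of the pushforward. To show these agree in $PX'$, I would test both against an arbitrary short map $\phi:X'\to\R$. Writing $\tilde\phi(x',y'):=\phi(x')$ for the lift of $\phi$ to a function on $X'\otimes Y'$, the definitions of pushforward and marginal give
\begin{equation*}
\int_{X'}\phi\,d\big((f\otimes g)_*r\big)_{X'} = \int_{X'\otimes Y'}\tilde\phi\,d\big((f\otimes g)_*r\big) = \int_{X\otimes Y}\phi\big(f(x)\big)\,dr(x,y),
\end{equation*}
whereas the definitions of marginal and pushforward give
\begin{equation*}
\int_{X'}\phi\,d\big(f_*(r_X)\big) = \int_X(\phi\circ f)\,dr_X = \int_{X\otimes Y}\phi\big(f(x)\big)\,dr(x,y).
\end{equation*}
These coincide, the crucial point being that the first coordinate of $(f\otimes g)(x,y)$ equals $f(x)$, independently of $y$ and of $g$. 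The identical argument with the factors interchanged settles the second component.

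The only delicate point, more bookkeeping than genuine obstacle, is to ensure that every function against which we integrate is short, so that the uniqueness-of-measures principle legitimately applies: $\phi\circ f$ is short as a composite of short maps, and the lift $\tilde\phi$ is short since $|\tilde\phi(x',y')-\tilde\phi(x'',y'')| = |\phi(x')-\phi(x'')| \le d(x',x'') \le d\big((x',y'),(x'',y'')\big)$ in the metric of $X'\otimes Y'$. With this verification in place the computation is entirely routine, and the analogous short-map check for the $Y$-component completes the argument.
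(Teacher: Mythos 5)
Your proof is correct and takes essentially the same route as the paper's: both unwind the definitions of marginal and pushforward and match the two composites by integrating against an arbitrary short test map $\phi$, where the key observation is that the first coordinate of the pushforward ignores the second factor. The only cosmetic difference is that you verify naturality in both variables simultaneously via $f\otimes g$ (and explicitly check shortness of the lift $\tilde\phi$), while the paper reduces to one variable at a time via $f\otimes\id$ and symmetry, leaving the shortness check implicit.
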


\begin{prop}\label{olmf}
 The marginal map together with the trivial counitor defines a symmetric oplax monoidal functor $(P,\id_1,\Delta)$.
\end{prop}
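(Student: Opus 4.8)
The plan is to verify the three defining coherence axioms of a symmetric oplax monoidal functor for $(P,\id_1,\Delta)$, dualizing the proof of Proposition \ref{lmf}. By Proposition \ref{deltasm} the map $\Delta$ is short, hence a morphism of $\cat{CMet}$, and by Proposition \ref{deltanat} it is natural in both arguments; the counitor $P(1)\to 1$ is forced to be the identity by affineness ($P(1)\cong 1$). It therefore remains only to check coassociativity, left/right counitality, and compatibility with the braiding. Following the convention of Definition \ref{jm}, I would establish each of these as an identity of Radon measures of finite first moment, and since such a measure is uniquely determined by its integrals against short maps to $\R$, it suffices to test each identity against an arbitrary short function.

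First I would treat coassociativity. For $r\in P(X\otimes Y\otimes Z)$ the key observation is the iterated-marginalization (tower) identity: for short $f:X\to\R$,
\[
\int_X f \, d\big((r_{X \otimes Y})_X\big) = \int_{X\otimes Y} f(x)\,dr_{X\otimes Y}(x,y) = \int_{X\otimes Y\otimes Z} f(x) \, dr(x,y,z) = \int_X f \, dr_X ,
\]
so that marginalizing first onto $X\otimes Y$ and then onto $X$ agrees with marginalizing directly onto $X$, and similarly for every factor. Consequently both composites $(\Delta\otimes\id)\circ\Delta$ and $(\id\otimes\Delta)\circ\Delta$ out of $P(X\otimes Y\otimes Z)$ deliver the same triple of marginals $(r_X,r_Y,r_Z)$, which is coassociativity.

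Next, for counitality, identifying $X\otimes 1\cong X$ and testing against short $f:X\to\R$ gives $\int_X f\,dr_X=\int_X f\,dr$, so the $X$-marginal of $r\in P(X\otimes 1)$ is $r$ itself while the $1$-marginal is the unique measure on $1$; combined with the trivial counitor this is exactly the left (and, symmetrically, right) counitality condition drawn in Section \ref{pmon}. For symmetry, writing $\sigma$ for the braiding, testing against short $g:Y\to\R$ shows that the first marginal of $P(\sigma)\,r$ equals $r_Y$, so $\Delta_{Y,X}\circ P(\sigma)=\sigma\circ\Delta_{X,Y}$, which is the braided compatibility condition.

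Since $\cat{CMet}$ is not strict, some care is needed with the associators and unitors, but these coherence isomorphisms are induced by the underlying set-level bijections of product spaces, so each diagram reduces to the corresponding identity on points and the displayed test-function computations suffice. I do not expect any genuine obstacle here: all three conditions are immediate consequences of the fact that marginalization is compatible with iteration, trivial on the unit, and equivariant under permutation, so the proof is formally dual to that of Proposition \ref{lmf}. The only point requiring (routine) care is the appeal to uniqueness of finite-first-moment measures under integration against short maps, used to upgrade each test-function identity to an equality of measures.
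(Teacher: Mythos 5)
Your proof is correct and takes essentially the same approach as the paper: the paper's proof likewise notes that naturality (Proposition \ref{deltanat}) and the trivial counitor leave only coassociativity to check, which it settles by observing that both composites yield $(p_X,p_Y,p_Z)$ ``since there is only one way of forming marginals''---exactly the tower identity you prove---and it dismisses the symmetry condition as straightforward, as you verify explicitly. Your test-function computations simply spell out details the paper compresses.
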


\begin{prop}\label{ommonad}
 $(P,\delta,E)$ is a symmetric opmonoidal monad.
\end{prop}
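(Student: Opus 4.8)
The plan is to leverage what the preceding propositions already give us. By Propositions \ref{deltasm}, \ref{deltanat}, and \ref{olmf}, the marginal map equips $P$ with a symmetric oplax monoidal functor structure $(P,\id_1,\Delta)$ with $\Delta$ natural. Hence, to upgrade this to a \emph{symmetric opmonoidal monad}, the only remaining content is to check that the monad unit $\delta$ and multiplication $E$ are opmonoidal natural transformations; the unit--counit conditions involving $P(1)\to 1$ are automatic by affineness, exactly as discussed in Section \ref{pmon}. Concretely, I must verify the \emph{deltaopmonoidal} and \emph{Eopmonoidal} identities
\begin{equation*}
 \Delta_{X,Y}\circ\delta_{X\otimes Y} = \delta_X\otimes\delta_Y
\end{equation*}
and
\begin{equation*}
 \Delta_{X,Y}\circ E_{X\otimes Y} = (E_X\otimes E_Y)\circ\Delta_{PX,PY}\circ P(\Delta_{X,Y}).
\end{equation*}

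The reduction strategy is uniform. An element of $PX\otimes PY$ is literally a pair $(p,q)\in PX\times PY$, so two such morphisms agree iff their $X$- and $Y$-components agree; and by the remarks following Definition \ref{jm}, a measure on $X$ is determined by its integral against short maps $f:X\to\R$. I would therefore compare the $X$-marginals of each side tested against an arbitrary short $f$, the $Y$-marginals being symmetric. For $\delta$ the computation is immediate: the $X$-marginal of $\delta_{(x,y)}$ sends $f$ to $\int_{X\otimes Y} f(x')\,d\delta_{(x,y)}(x',y')=f(x)=\int_X f\,d\delta_x$, so $\Delta\circ\delta_{X\otimes Y}(x,y)=(\delta_x,\delta_y)$, as required.

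For $E$ I would expand both sides using Definition \ref{jm} together with the defining formula for $E$. Unwinding the left-hand side, $\int_X f\,d\big((E\mu)_X\big)$ becomes $\int_{P(X\otimes Y)}\big(\int_X f\,d(r_X)\big)\,d\mu(r)$ after applying the marginal definition inside the average. Unwinding the right-hand side, $P(\Delta_{X,Y})(\mu)$ is the pushforward of $\mu$ along $\Delta$; taking its $PX$-marginal and then averaging via $E_X$ produces, against $f$, the integral of the short evaluation map $p\mapsto\int_X f\,dp$ against that pushforward, and rewriting the pushforward and the marginal turns this back into $\int_{P(X\otimes Y)}\big(\int_X f\,d(r_X)\big)\,d\mu(r)$, matching the left-hand side.

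The main obstacle is the measure-theoretic bookkeeping, precisely as in the monoidal case (Proposition \ref{mmonad}). I must confirm that the intermediate integrands — in particular the evaluation map $p\mapsto\int_X f\,dp$ on $PX$, which is short because $|\int_X f\,dp-\int_X f\,dp'|\le d(p,p')$ by the Kantorovich metric — are short, so that the nested integrals stay within the class of finite-first-moment measures and the functionals remain positive and Scott-continuous. I must also justify the interchange in the order of integration that identifies the two expressions. Both points are governed by the finite-first-moment hypothesis, and once they are secured the equality of the two tested functionals is a routine Fubini-type manipulation, completing the verification that $(P,\delta,E)$ is a symmetric opmonoidal monad.
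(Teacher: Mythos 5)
Your proposal is correct and takes essentially the same route as the paper's proof: you reduce to checking compatibility of $\delta$ and $E$ with the comultiplication (the counit conditions being trivial by affineness), identify the composite comultiplication $\Delta_{PX,PY}\circ P(\Delta_{X,Y})$ exactly as the paper's $\Delta^2_{X,Y}$, and verify both identities by testing each marginal against short maps $f:X\to\R$ with the same Fubini-type unwinding of Definition \ref{jm} and the formula for $E$. The only cosmetic difference is that you spell out the shortness of the evaluation map $p\mapsto\int_X f\,dp$, which the paper leaves implicit in its use of short test functions on $PX$.
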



The lax and oplax monoidal structure interact to give a bimonoidal structure. The following statements also follow just from the properties of joints and marginals.

\begin{prop}\label{blmf}
 $P$ is a symmetric bilax monoidal functor.
\end{prop}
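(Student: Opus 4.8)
The plan is to reduce the statement to the single bimonoidality axiom \eqref{braidcond}, since the two halves of the structure, together with their symmetry, are already in hand: Proposition \ref{lmf} provides the symmetric lax structure $\nabla$, and Proposition \ref{olmf} the symmetric oplax structure $\Delta$. A symmetric bilax monoidal functor is precisely such a pair subject to two kinds of compatibility, the unit--counit conditions and the distributivity condition \eqref{braidcond}. The unit--counit conditions involve only morphisms into or out of the monoidal unit, so they hold automatically here: $P$ is affine, $P(1)\cong 1$, and the relevant unitor and counitor are both the identity. Everything therefore comes down to verifying \eqref{braidcond}.

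Spelled out for four spaces $A,B,C,D$, the condition \eqref{braidcond} asserts that the two natural composites
\[ P(A\otimes B)\otimes P(C\otimes D)\longrightarrow P(A\otimes C)\otimes P(B\otimes D) \]
coincide: one route forms the product measure via $\nabla$, reorders the factors by the braiding inside $P$, and then takes the two marginals via $\Delta$; the other route first extracts all four marginals via $\Delta\otimes\Delta$, swaps the middle two factors using the braiding of $\cat{CMet}$, and then forms two product measures via $\nabla\otimes\nabla$. To prove equality I would evaluate both sides on an arbitrary pair $(r,s)\in P(A\otimes B)\otimes P(C\otimes D)$; each side returns a pair of measures in $P(A\otimes C)\otimes P(B\otimes D)$, and since by construction a measure in $PX$ is determined uniquely by its integrals against short maps $X\to\R$, it suffices to test the two components against short functions $\phi:A\otimes C\to\R$ and $\psi:B\otimes D\to\R$.

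Substituting the integral formulas of Definition \ref{jm}, both composites unwind into iterated integrals over the four spaces. On the $A\otimes C$-component, the product-then-marginalize route gives $\int\phi(a,c)\,d(r\otimes_\nabla s)(a,b,c,d)=\int\!\!\int\phi(a,c)\,dr(a,b)\,ds(c,d)$, while the marginalize-then-product route gives $\int\!\!\int\phi(a,c)\,dr_A(a)\,ds_C(c)$. The identity of these two expressions is exactly the statement that the marginal of a product measure is the product of the marginals, i.e.\ Fubini's theorem; the analogous computation handles the $B\otimes D$-component, both sides yielding the pair $(r_A\otimes_\nabla s_C,\ r_B\otimes_\nabla s_D)$. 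Integrability is not an issue, since $\phi$ is short, hence Lipschitz of at most linear growth, and the measures have finite first moment, so Fubini applies. Combined with the braiding compatibility already recorded in Propositions \ref{lmf} and \ref{olmf}, the verified axiom upgrades $P$ to a \emph{symmetric} bilax monoidal functor.

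The step I expect to require the most care is the bookkeeping of the braidings and the reindexing of the four variables: one must track which marginal lands in which tensor factor so that the two routes really produce the same pair $(r_A\otimes_\nabla s_C,\ r_B\otimes_\nabla s_D)$ rather than a transposed version. The analytic content—Fubini together with the fact that short maps separate measures—is routine given the earlier development, so the genuine work here is organizational rather than computational.
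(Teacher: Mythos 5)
Your proposal is correct and follows essentially the same route as the paper's proof: both reduce the statement to the single bimonoidality diagram \eqref{braidcond} (the unit--counit conditions being trivial since the unitors are identities), and both verify that diagram by testing each component against short functions and invoking Fubini's theorem to show that the marginals of $p\otimes_\nabla q$ are the products of the corresponding marginals, with the symmetry inherited from Propositions \ref{lmf} and \ref{olmf}. No gaps.
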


The main result then just follows as a corollary:

\begin{thm}\label{bimonoidal}
 The Kantorovich monad is a symmetric bimonoidal monad, with monoidal structure given by the product joint, and opmonoidal structure given by the marginals.
\end{thm}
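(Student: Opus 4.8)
The plan is to obtain the theorem as a corollary of the propositions already established, since all of the genuine analytic content—shortness of the structure maps and the single bilax compatibility equation—has been isolated into Propositions \ref{nablasm}, \ref{deltasm}, and \ref{blmf}. First I would recall the definition given in Section \ref{pmon}: a bimonoidal monad consists of a functor carrying a bimonoidal (equivalently bilax) structure together with a unit $\delta$ and a multiplication $E$ that are bimonoidal natural transformations, where a bimonoidal natural transformation is by definition one that is simultaneously monoidal and opmonoidal. The strategy is then simply to match each clause of this definition against a previously proved statement.

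The functor-level claim is exactly Proposition \ref{blmf}, which asserts that $P$ is a symmetric bilax monoidal functor with structure maps $\nabla$ and $\Delta$. To promote ``bilax'' to ``bimonoidal'' in the present setting, I would observe that the two ingredients of bimonoidality beyond the underlying lax and oplax structures are the unit--counit conditions and the distributivity square \eqref{braidcond}. The distributivity square is precisely the compatibility already contained in the bilax axioms, so it is supplied by Proposition \ref{blmf}; the unit--counit conditions only involve maps into and out of $1$, and since $P(1)\cong 1$ while the monoidal structure on $\cat{CMet}$ is affine and semicartesian, these conditions degenerate to equalities between maps that are forced to be the identity. Hence $P$ is a symmetric bimonoidal functor.

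For the natural transformations, I would combine Proposition \ref{mmonad} with Proposition \ref{ommonad}. The former states that $(P,\delta,E)$ is a symmetric monoidal monad, so $\delta$ and $E$ are monoidal natural transformations compatible with $\nabla$; the latter states that it is a symmetric opmonoidal monad, so $\delta$ and $E$ are opmonoidal natural transformations compatible with $\Delta$. A natural transformation that is at once monoidal and opmonoidal is a bimonoidal natural transformation, so $\delta$ and $E$ qualify. Assembling the functor-level and transformation-level conclusions then yields that $(P,\delta,E)$ is a symmetric bimonoidal monad, with $\nabla$ the product joint and $\Delta$ the marginals, as claimed.

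I do not expect a serious obstacle at this stage, precisely because the difficulty has already been absorbed upstream: the only analytically nontrivial steps were verifying that $\nabla$ and $\Delta$ are short with respect to the additive product metric rather than merely Lipschitz (Propositions \ref{nablasm} and \ref{deltasm}) and checking the bilax compatibility (Proposition \ref{blmf}). The remaining work is bookkeeping—confirming that the clauses in the definition of a bimonoidal monad correspond exactly to the conclusions of the cited propositions, and that the unit--counit conditions vanish under $P(1)\cong 1$. The one point I would be careful about is making this logical dependence fully explicit, so that no bilax axiom is silently assumed rather than cited from Proposition \ref{blmf}, and that the ``symmetric'' adjective is tracked through each component rather than merely asserted.
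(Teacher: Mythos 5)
Your proposal is correct and follows exactly the paper's route: the theorem is presented there as an immediate corollary of Propositions \ref{mmonad}, \ref{ommonad}, and \ref{blmf}, with the unit--counit conditions trivialized by affineness, which is precisely your assembly. The only substance lies in those cited propositions, and your bookkeeping of the ``symmetric'' adjective and the bilax axioms matches the intended argument.
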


By Proposition~\ref{correlationloss}, we therefore have:

\begin{cor}
 $\Delta_{X,Y} \circ \nabla_{X,Y} =\id_{PX\otimes PY}$. Therefore, the inclusion $\nabla$ of product measures into general joints, is an isometric embedding for the Kantorovich metric, and its image is a retract of the space of all joints.
\end{cor}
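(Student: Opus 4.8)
The first assertion, $\Delta_{X,Y}\circ\nabla_{X,Y}=\id_{PX\otimes PY}$, requires no new work. Theorem~\ref{bimonoidal} establishes that the Kantorovich monad is a bimonoidal functor with $P(1)\cong 1$, so the hypotheses of Proposition~\ref{correlationloss} are met, and that proposition delivers the identity directly. The same proposition also records that $PX\otimes PY$ is then a retract of $P(X\otimes Y)$, with $\nabla$ the section and $\Delta$ the retraction; this settles the retract claim immediately.

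It remains to upgrade $\nabla$ from a section to an \emph{isometric} embedding, and the plan is a two-sided estimate. Recall that the metric on the tensor product $PX\otimes PY$ is the sum metric, $d\big((p,q),(p',q')\big)=d(p,p')+d(q,q')$, inherited from the sum metric on $X\otimes Y$ that defines $\cat{CMet}$. First I would invoke Proposition~\ref{nablasm}: since $\nabla$ is short,
\[
 d\big(\nabla(p,q),\nabla(p',q')\big)\le d\big((p,q),(p',q')\big),
\]
which gives one inequality for free.

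For the reverse inequality I would exploit the retraction identity together with the shortness of $\Delta$ (Proposition~\ref{deltasm}). Substituting $\Delta\circ\nabla=\id$ and then applying shortness of $\Delta$ gives
\[
 d\big((p,q),(p',q')\big)=d\big(\Delta\nabla(p,q),\Delta\nabla(p',q')\big)\le d\big(\nabla(p,q),\nabla(p',q')\big).
\]
Combining the two displayed inequalities forces equality, so $\nabla$ preserves distances exactly.

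I do not anticipate a genuine obstacle: the only nontrivial inputs, namely the shortness of $\nabla$ and of $\Delta$, have already been proved, and what remains is the standard observation that a short section of a short retraction is automatically isometric. The one point worth double-checking is that the ambient metric on $PX\otimes PY$ is indeed the sum metric and not the product (max) metric, since $\nabla$ is short precisely for the sum metric and merely Lipschitz otherwise, as noted after Proposition~\ref{nablasm}; this is exactly the normalization used throughout Section~\ref{bmp}.
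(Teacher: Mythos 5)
Your proposal is correct and takes essentially the same route as the paper: the identity $\Delta_{X,Y}\circ\nabla_{X,Y}=\id$ and the retract claim are obtained exactly as the paper does, by applying Proposition~\ref{correlationloss} once Theorem~\ref{bimonoidal} supplies its hypotheses. The isometry argument you spell out --- combining shortness of $\nabla$ (Proposition~\ref{nablasm}) and of $\Delta$ (Proposition~\ref{deltasm}) with the retraction identity, so that a short section of a short retraction is automatically isometric --- is precisely the argument the paper leaves implicit, including your correct observation that the relevant metric on $PX\otimes PY$ is the sum metric.
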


\appendix

\section{Monoidal, opmonoidal and bimonoidal monads}\label{monoidalstuff}

We recall the definition of the different monoidal structures for a functor, for the case of braided  (including symmetric) monoidal categories. For more results and more general definitions, we refer to \cite{monoidal}.

Let $(\cat{C},\otimes)$ and $(\cat{D},\otimes)$ be braided monoidal categories. 

\begin{deph}\label{mfunc}
 A \emph{lax monoidal functor} $(\cat{C},\otimes) \to (\cat{D},\otimes)$ is a triple $(F,\eta,\nabla)$, such that:
 \begin{enumerate}
  \item $F:C\to D$ is a functor;
  \item The ``unit'' $\eta:1_\cat{D} \to F(1_\cat{C})$ is a morphism of $\cat{D}$;
  \item The ``multiplication'' $\nabla:F(-) \otimes F(-) \Rightarrow F(- \otimes -)$ is a natural transformation of functors $\cat{C}\times \cat{C} \to \cat{D}$;
  \item The following ``associativity'' diagram commutes for every $X,Y,Z$ in $\cat{C}$:
  \begin{equation*}
   \begin{tikzcd}
    (FX \otimes FY) \otimes FZ \ar{r}{\cong} 	\ar{d}{\nabla_{X,Y}\otimes \id}	& FX \otimes (FY \otimes FZ)	\ar{d}{\id \otimes \nabla_{Y,Z}} \\
    F(X\otimes Y) \otimes FZ 			\ar{d}{\nabla_{X\otimes Y, Z}}	& FX \otimes F(Y\otimes Z)	\ar{d}{\nabla_{X,Y\otimes Z}} \\
    F((X \otimes Y) \otimes Z) \ar{r}{\cong} 					& F(X \otimes (Y \otimes Z)) 
   \end{tikzcd}
  \end{equation*}
  \item The following ``unitality'' diagrams commute for every $X$ in $\cat{C}$:
  \begin{equation*}
   \begin{tikzcd}
    1_\cat{D} \otimes FX 	\ar{d}{\cong}	\ar{r}{\eta\otimes \id}	& F(1_\cat{C}) \otimes FX 	\ar{d}{\nabla_{1_\cat{C},X}} \\
    FX 									& F(1_\cat{C} \otimes X) 		\ar{l}{\cong}
   \end{tikzcd}
   \qquad
   \begin{tikzcd}
    FX \otimes 1_\cat{D} 	\ar{d}{\cong}	\ar{r}{\id\otimes\eta}	& FX \otimes F(1_\cat{C}) 	\ar{d}{\nabla_{X,1_\cat{C}}} \\
    FX 									& F(X \otimes 1_\cat{C}) 		\ar{l}{\cong}
   \end{tikzcd}
  \end{equation*}
 \end{enumerate}
 We say that $(F,\eta,\nabla)$ is also \emph{braided}, or \emph{symmetric} if $\cat{C}$ is symmetric, if in addition the multiplication commutes with the braiding:
 \begin{equation*}
 \begin{tikzcd}
  FX \otimes FY \ar{d}{\nabla} \ar{r}{\cong} & FY \otimes FX \ar{d}{\nabla} \\
  F(X\otimes Y) \ar{r}{\cong} & F(Y\otimes X)
 \end{tikzcd}
\end{equation*}
\end{deph}

\begin{deph} 
  Let $(F,\eta_F,\nabla_F)$ and $(G,\eta_G,\nabla_G)$ be lax monoidal functors $(\cat{C},\otimes) \to (\cat{D},\otimes)$. A \emph{lax monoidal natural transformation}, or just \emph{monoidal natural transformation} when it's clear from the context, is a natural transformation $\alpha:F\Rightarrow G$ which is compatible with the unit and multiplication map. In particular, the following diagrams must commute (for all $X,Y\in \cat{C}$):
\begin{equation*}
 \begin{tikzcd}
  1_\cat{D} \ar{r}{\eta_F} \ar[swap]{dr}{\eta_G}  & F(1_\cat{C}) \ar{d}{\alpha_{1_\cat{C}}} \\
  & G(1_\cat{C})
 \end{tikzcd}
 \qquad
 \begin{tikzcd}
  FX \otimes FY \ar{r}{\nabla_F} \ar{d}{\alpha_X\otimes\alpha_Y} & F(X\otimes Y) \ar{d}{\alpha_{X\otimes Y}} \\
  GX \otimes GY \ar{r}{\nabla_G}  & G(X\otimes Y) 
 \end{tikzcd}
\end{equation*}
\end{deph}

\begin{deph}\label{opmfunc}
 An \emph{oplax monoidal functor} $(\cat{C},\otimes) \to (\cat{D},\otimes)$ is a triple $(F,\epsilon,\Delta)$, such that:
 \begin{enumerate}
  \item $F:C\to D$ is a functor;
  \item The ``counit'' $\epsilon: F(1_\cat{C}) \to 1_\cat{D}$ is a morphism of $\cat{D}$;
  \item The ``comultiplication'' $\Delta: F(- \otimes -) \Rightarrow F(-) \otimes F(-)$ is a natural transformation of functors $\cat{C}\times \cat{C} \to \cat{D}$;
  \item The following ``coassociativity'' diagram commutes for every $X,Y,Z$ in $\cat{C}$:
  \begin{equation*}
   \begin{tikzcd}
    F((X \otimes Y) \otimes Z) \ar{r}{\cong} 	\ar{d}{\Delta_{X\otimes Y, Z}} 	& F(X \otimes (Y \otimes Z))	\ar{d}{\Delta_{X,Y\otimes Z}} \\
    F(X\otimes Y) \otimes FZ 			\ar{d}{\Delta_{X,Y}\otimes \id}	& FX \otimes F(Y\otimes Z)	\ar{d}{\id \otimes \Delta_{Y,Z}} \\ 
    (FX \otimes FY) \otimes FZ \ar{r}{\cong} 		& FX \otimes (FY \otimes FZ)
   \end{tikzcd}
  \end{equation*}
  \item The following ``counitality'' diagrams commute for every $X$ in $\cat{C}$:
  \begin{equation*}
   \begin{tikzcd}
   F(1_\cat{C} \otimes X)	\ar{d}{\cong} 	\ar{r}{\Delta_{1_\cat{C},X}}	& F(1_\cat{C}) \otimes FX	\ar{d}{\epsilon\otimes \id}\\
   FX 										& 1_\cat{D} \otimes FX \ar{l}{\cong}
   \end{tikzcd}
   \qquad
   \begin{tikzcd}
   F(X \otimes 1_\cat{C})	\ar{d}{\cong} 	\ar{r}{\Delta_{X,1_\cat{C}}}	& FX \otimes F(1_\cat{C})	\ar{d}{\id\otimes\epsilon}\\
   FX 										& FX \otimes 1_\cat{D}  \ar{l}{\cong}
   \end{tikzcd}
  \end{equation*}
 \end{enumerate}
  We say that $(F,\epsilon,\Delta)$ is also \emph{braided}, or \emph{symmetric} if $\cat{C}$ is symmetric, if in addition the comultiplication commutes with the braiding:
\begin{equation*}
 \begin{tikzcd}
  F(X\otimes Y) \ar{d}{\Delta} \ar{r}{\cong} & F(Y\otimes X) \ar{d}{\Delta} \\
  FX \otimes FY \ar{r}{\cong} & FY \otimes FX
 \end{tikzcd}
\end{equation*}
\end{deph}

\begin{deph} 
 Let $(F,\epsilon_F,\Delta_F)$ and $(G,\epsilon_G,\Delta_G)$ be oplax monoidal functors $(\cat{C},\otimes) \to (\cat{D},\otimes)$. An \emph{oplax monoidal natural transformation}, or just \emph{monoidal natural transformation} when it's clear from the context, is a natural transformation $\alpha:F\Rightarrow G$ which is compatible with the counit and comultiplication map. In particular, the following diagrams must commute (for all $X,Y\in \cat{C}$):
\begin{equation*}
 \begin{tikzcd}
  1_\cat{D} \ar[leftarrow]{r}{\epsilon_F} \ar[swap,leftarrow]{dr}{\epsilon_G}  & F(1_\cat{C}) \ar{d}{\alpha_{1_\cat{C}}} \\
  & G(1_\cat{C})
 \end{tikzcd}
 \qquad
 \begin{tikzcd}
  FX \otimes FY \ar[leftarrow]{r}{\Delta_F} \ar{d}{\alpha_X\otimes\alpha_Y} & F(X\otimes Y) \ar{d}{\alpha_{X\otimes Y}} \\
  GX \otimes GY \ar[leftarrow]{r}{\Delta_G}  & G(X\otimes Y) 
 \end{tikzcd}
\end{equation*}
\end{deph}

\begin{deph}\label{bimfunc}
 A \emph{bilax monoidal functor} $(\cat{C},\otimes) \to (\cat{D},\otimes)$ is a ``quintuplet'' $(F,\eta,\nabla,\epsilon,\Delta)$ such that:
 \begin{enumerate}
  \item $(F,\eta,\nabla): (\cat{C},\otimes) \to (\cat{D},\otimes)$ is a lax monoidal functor;
  \item $(F,\epsilon,\Delta): (\cat{C},\otimes) \to (\cat{D},\otimes)$ is an oplax monoidal functor;
  \item The following ``bimonoidality'' diagram commutes:
  \begin{equation}\label{braiding}
    \begin{tikzcd}
    & F(W\otimes X) \otimes F(Y\otimes Z) \ar[swap]{dl}{\nabla_{W\otimes X, Y\otimes Z}} \ar{dr}{\Delta_{W,X}\otimes \Delta_{Y,Z}} \\
    F(W\otimes X \otimes Y \otimes Z) \ar[swap]{d}{\cong} & & F(W) \otimes F(X) \otimes F(Y) \otimes F(Z) \ar{d}{\cong} \\
    F(W\otimes Y \otimes X \otimes Z) \ar[swap]{dr}{\Delta_{W\otimes Y,X\otimes Z}} & & F(W) \otimes F(Y) \otimes F(X) \otimes F(Z) \ar{dl}{\nabla_{W,Y}\otimes\nabla_{X,Z}} \\
    & F(W\otimes Y) \otimes F(X\otimes Z)
    \end{tikzcd}
  \end{equation}
  \item The following three ``unit/counit'' diagrams commute:
    \begin{equation*}\begin{tikzcd}
    1 \ar{r}{\eta} \idar{dr} & F(1) \ar{d}{\epsilon} \\
    & 1
    \end{tikzcd}
    \qquad
    \begin{tikzcd}
    1 \ar[swap]{d}{\cong} \ar{r}{\eta} & F(1) \ar{r}{\cong} & F(1\otimes 1) \ar{d}{\Delta_{1,1}} \\
    1\otimes 1 \ar[swap]{rr}{\eta\otimes\eta} & & F(1) \otimes F(1)
    \end{tikzcd}
   \end{equation*}
   \begin{equation*} 
    \begin{tikzcd}
    1 & F(1) \ar[swap]{l}{\epsilon} & F(1\otimes 1) \ar[swap]{l}{\cong} \\
    1\otimes 1 \ar{u}{\cong} & & F(1) \otimes F(1) \ar{ll}{\epsilon\otimes\epsilon} \ar[swap]{u}{\nabla_{1,1}}
    \end{tikzcd}\end{equation*}
 \end{enumerate}
\end{deph}

\begin{deph}
  Let $(F,\epsilon_F,\Delta_F)$ and $(G,\epsilon_G,\Delta_G)$ be bilax monoidal functors $(\cat{C},\otimes) \to (\cat{D},\otimes)$. A \emph{bilax monoidal natural transformation}, or just \emph{monoidal natural transformation} when it's clear from the context, is a natural transformation $\alpha:F\Rightarrow G$ which is a lax and oplax natural transformation. 
\end{deph}

\begin{deph}
 Now, we define:
 \begin{itemize}
  \item A \emph{monoidal monad} is a monad in the bicategory of monoidal categories, lax monoidal functors, and monoidal natural transformations;
  \item An \emph{opmonoidal monad} is a monad in the bicategory of monoidal categories, oplax monoidal functors, and monoidal natural transformations;
  \item A \emph{bimonoidal monad} is a monad in the bicategory of braided monoidal categories, bilax monoidal functors, and monoidal natural transformations.
 \end{itemize}
 In the third definition, we need the symmetry (or at least a braiding) in order to express the bimonoid equation that is part of the definition of bilax monoidal functor~\cite{monoidal}, even if the functor itseld if not braided. If the functor is braided, we can define in addition:
 \begin{itemize}
  \item A \emph{braided} (resp.\ \emph{symmetric}) \emph{monoidal monad} is a monad in the bicategory of braided (resp.\ symmetric) monoidal categories, braided lax monoidal functors, and monoidal natural transformations;
  \item An \emph{braided} (resp.\ \emph{symmetric}) \emph{opmonoidal monad} is a monad in the bicategory of braided (resp.\ symmetric) monoidal categories, braided oplax monoidal functors, and monoidal natural transformations;
  \item A \emph{braided} (resp.\ \emph{symmetric}) \emph{bimonoidal monad} is a monad in the bicategory of braided (resp.\ symmetric) monoidal categories, braided bilax monoidal functors, and monoidal natural transformations.
 \end{itemize}
\end{deph}

\section{Proofs}\label{proofs}

Here are the detailed proofs of the statements in the main text.

\subsection{Graphical proofs}\label{proofcl}

\begin{proof}[Proof of Proposition \ref{correlationloss}]
Let $X=Y=1_\cat{C}$ in the bimonoidality diagram \eqref{braidcond}, and rename $W$ to $X$ and $Z$ to $Y$ for convenience. 
Then we get:
\begin{equation*}
 \includegraphics[align=c,scale=0.35,keepaspectratio=true]{.//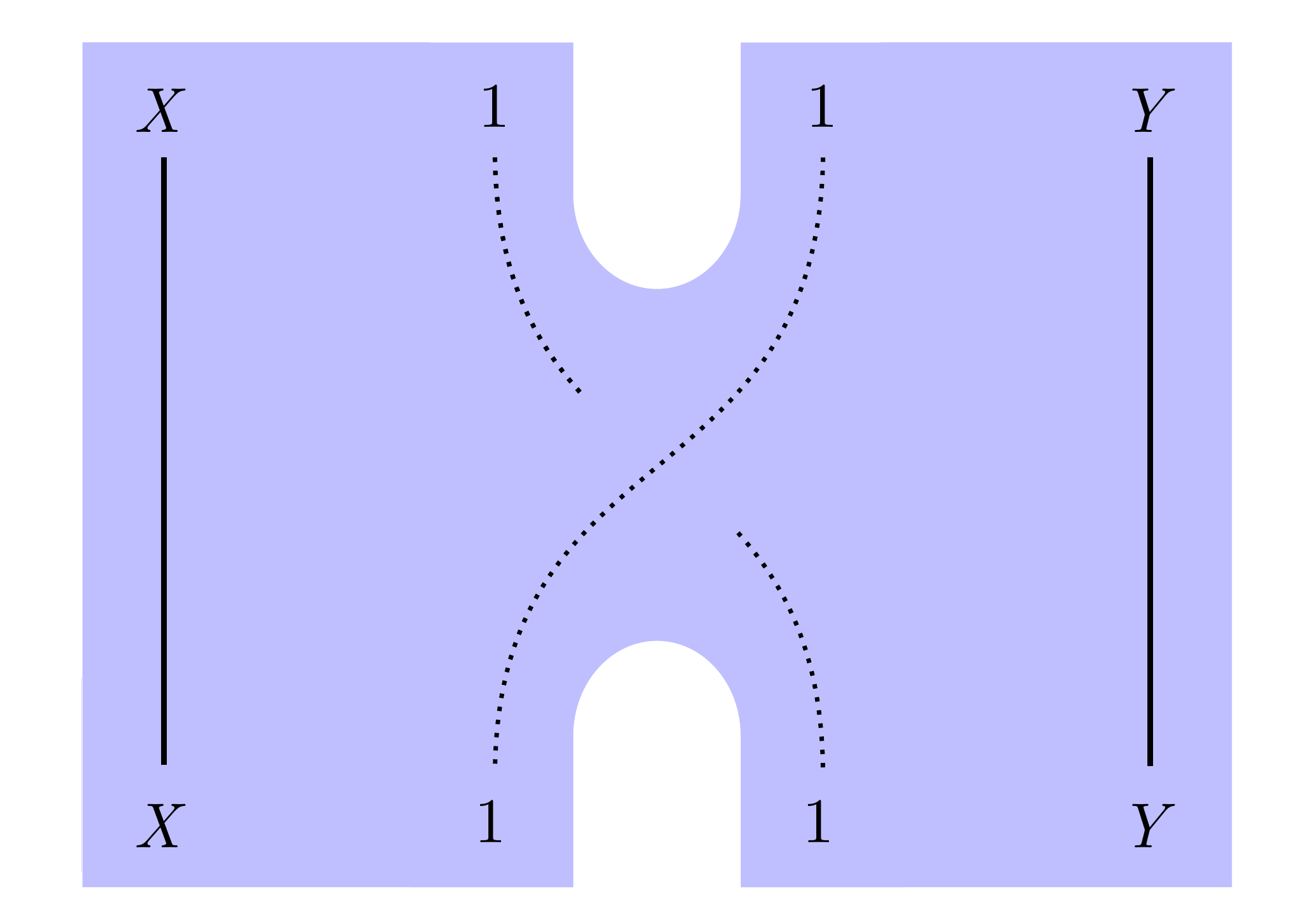}
 =
 \includegraphics[align=c,scale=0.35,keepaspectratio=true]{.//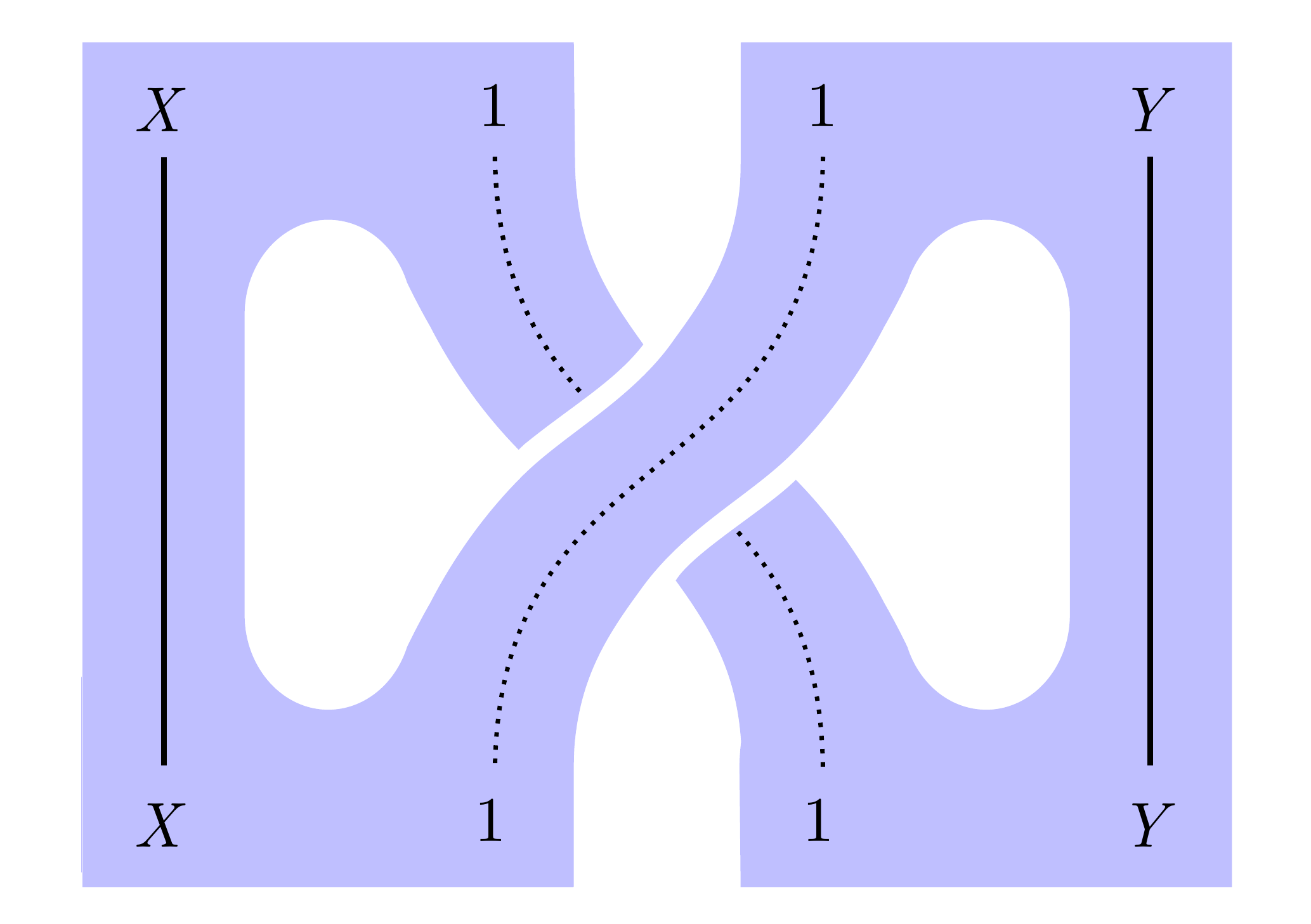}
\end{equation*}
Now since the braiding at $1\otimes 1$ is just the identity, we can even simplify the condition to:
\begin{equation*}
 \includegraphics[align=c,scale=0.35,keepaspectratio=true]{.//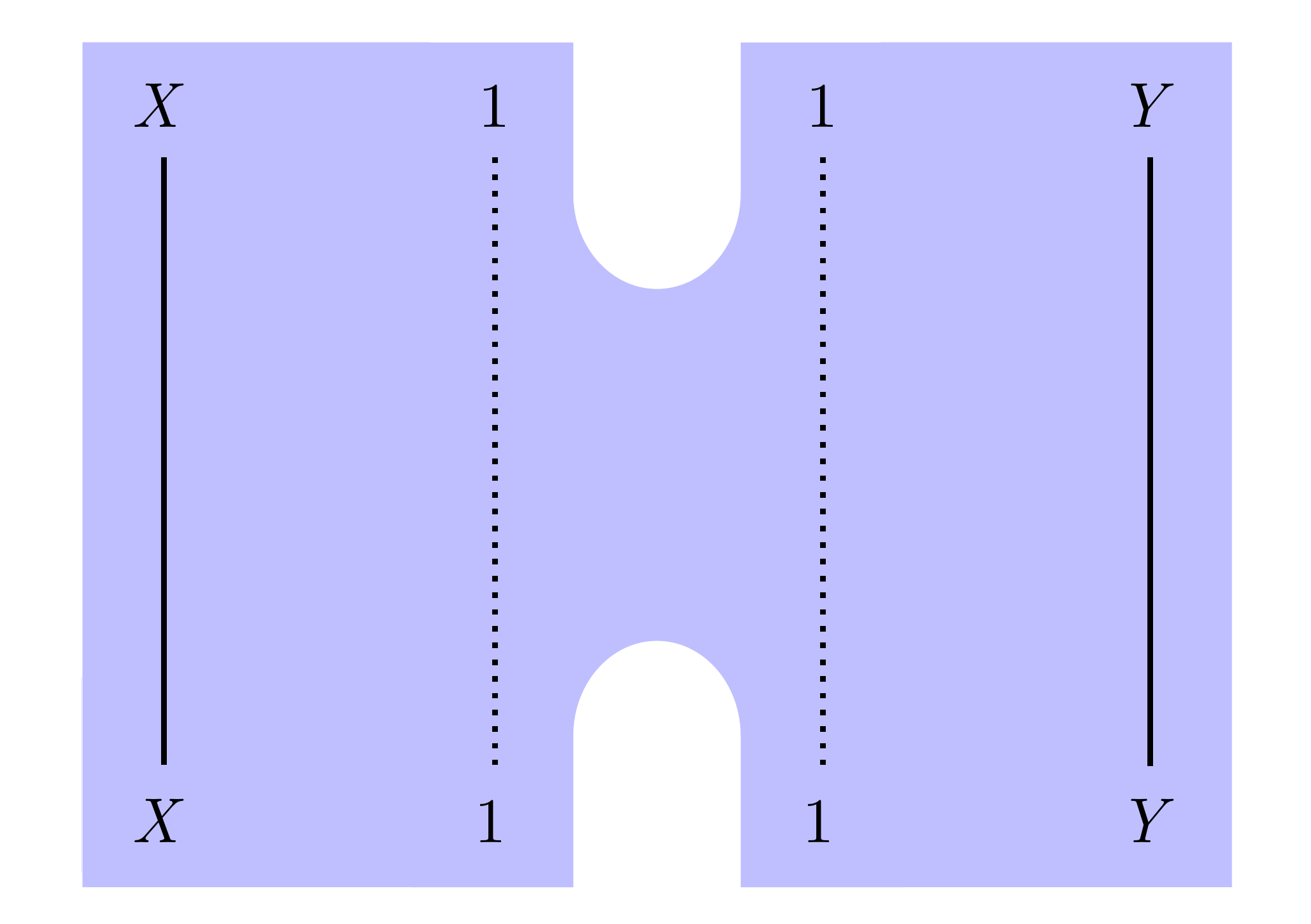}
 =
 \includegraphics[align=c,scale=0.35,keepaspectratio=true]{.//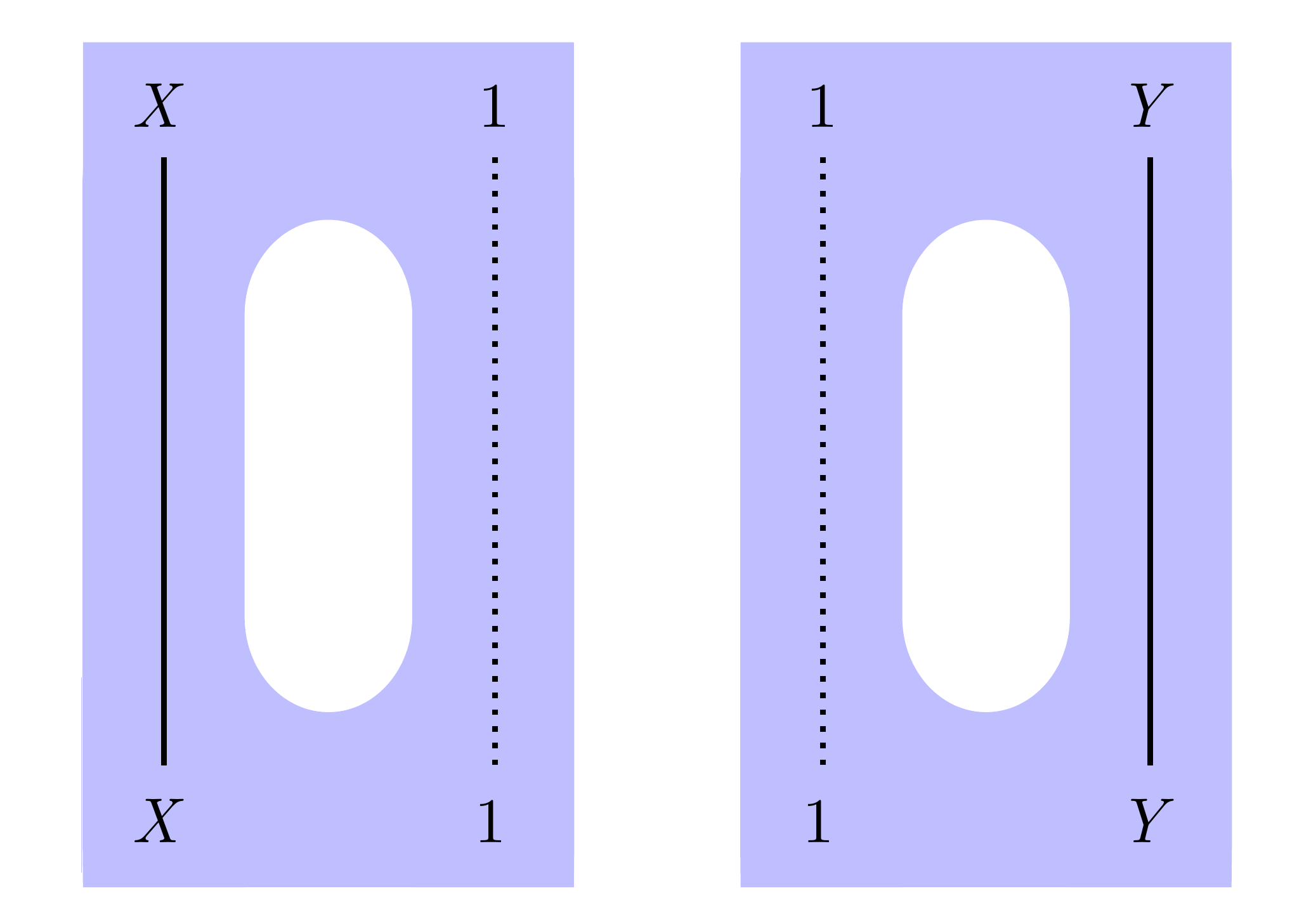}
\end{equation*}
or more concisely:
\begin{equation*}
 \includegraphics[align=c,scale=0.3,keepaspectratio=true]{.//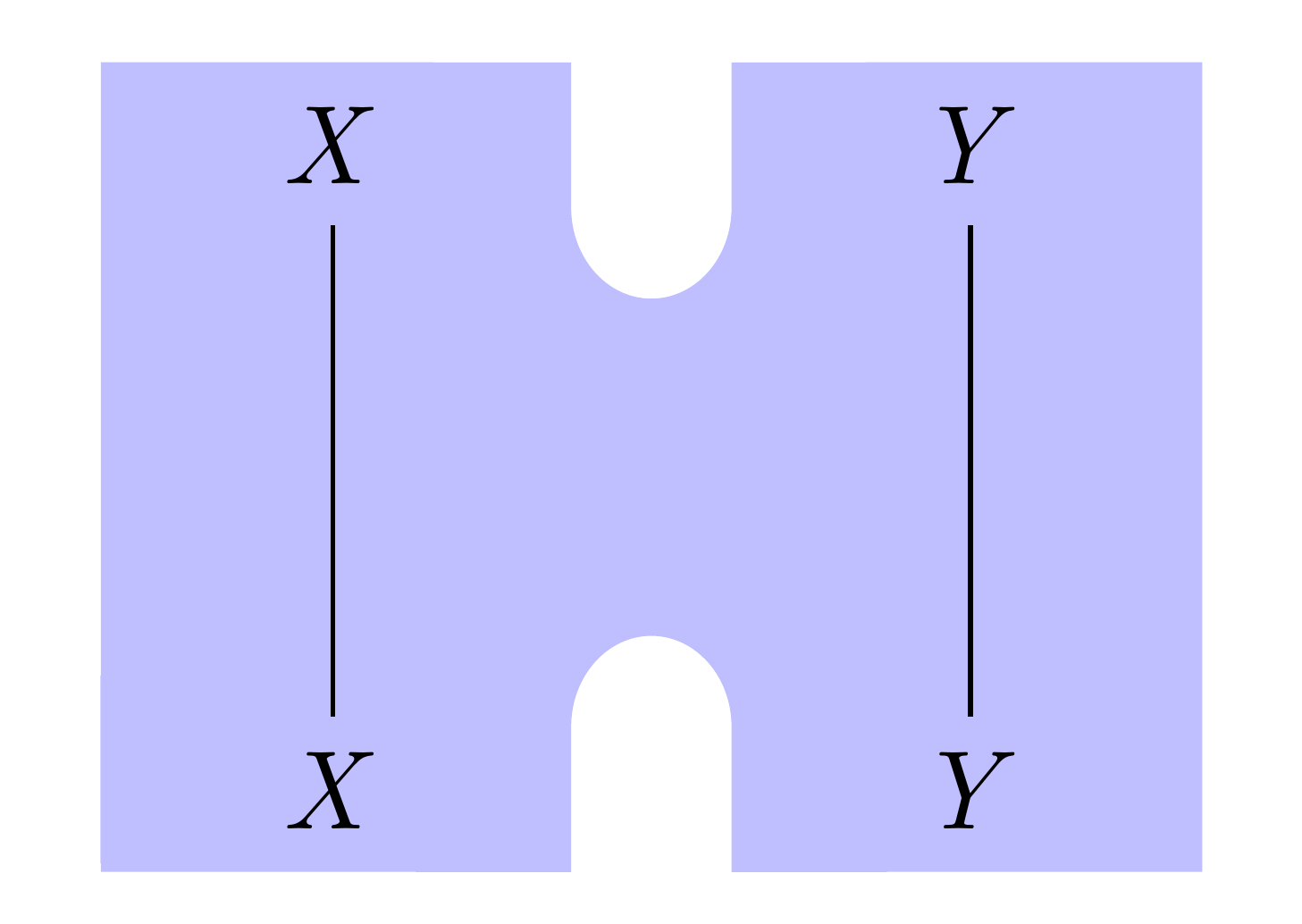}
 =
 \includegraphics[align=c,scale=0.3,keepaspectratio=true]{.//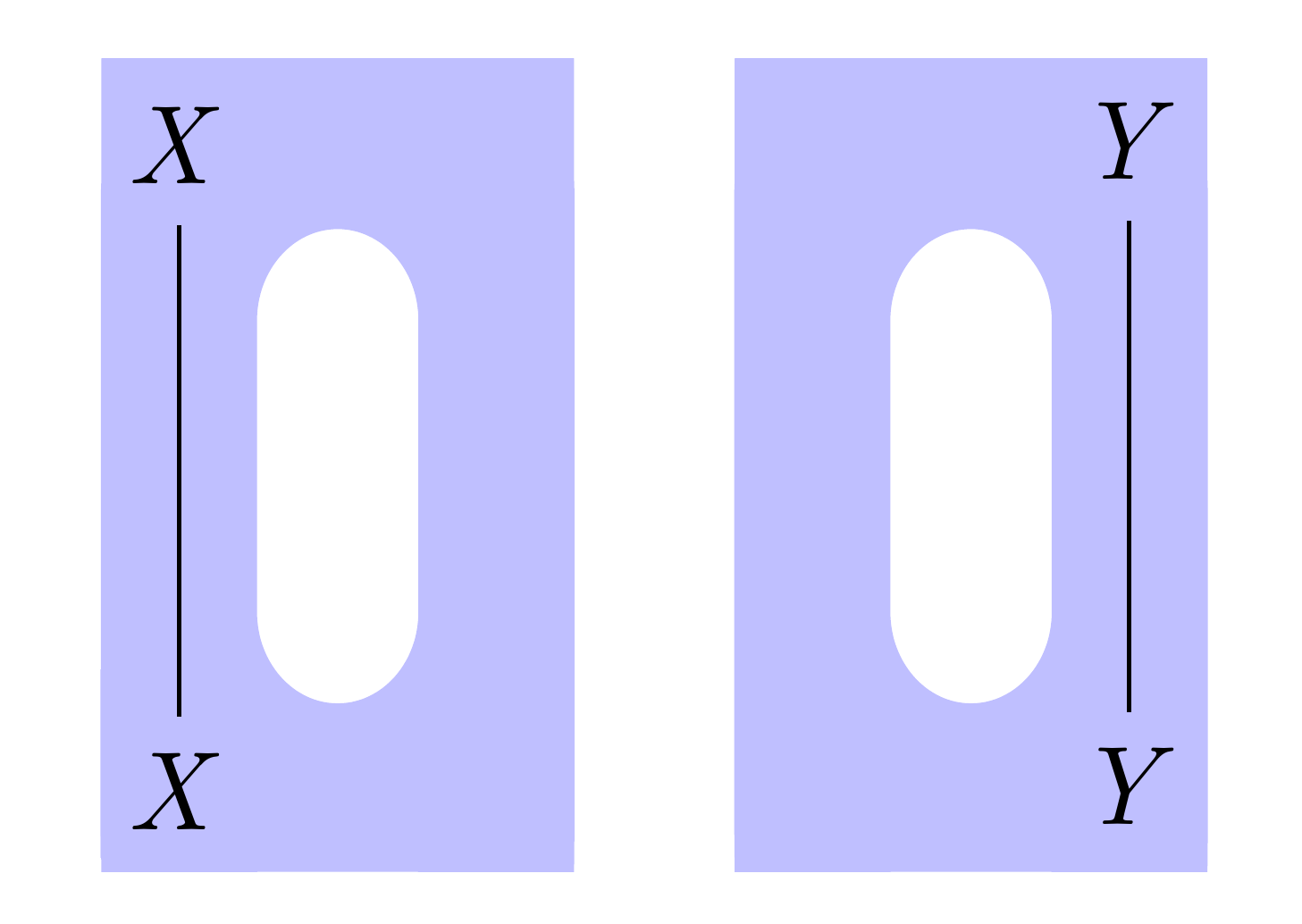}
\end{equation*}
We now notice that:
\begin{equation*}
 \includegraphics[align=c,scale=0.35,keepaspectratio=true,clip=true,trim=50pt 0pt 50pt 0pt]{.//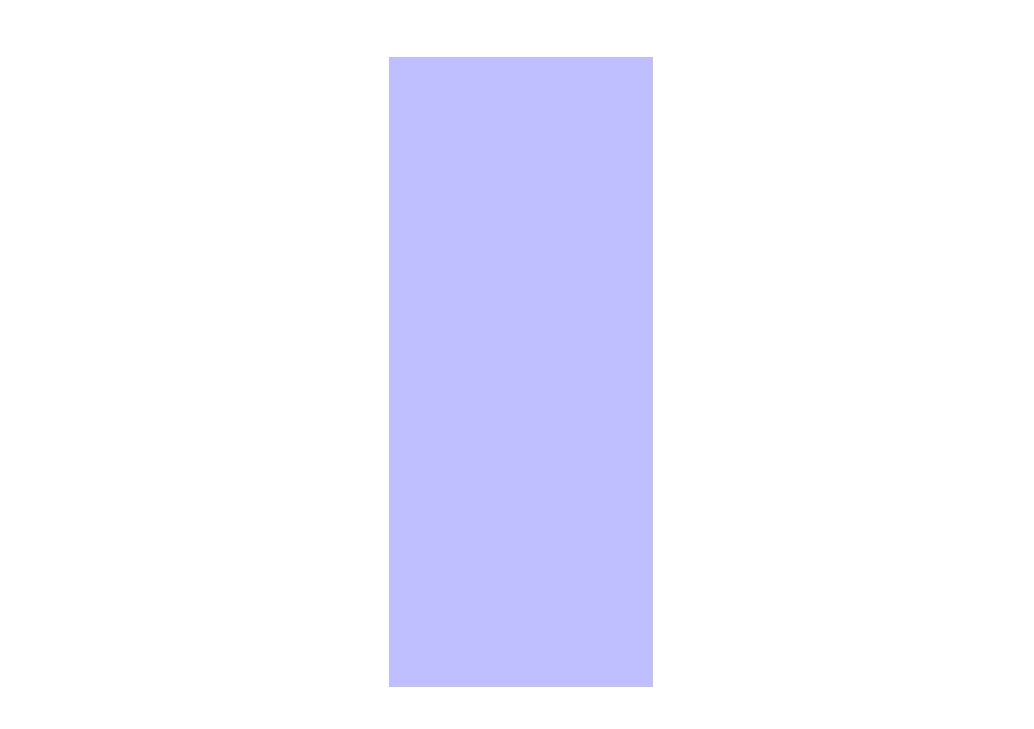}
 =
 \includegraphics[align=c,scale=0.35,keepaspectratio=true,clip=true,trim=50pt 0pt 50pt 0pt]{.//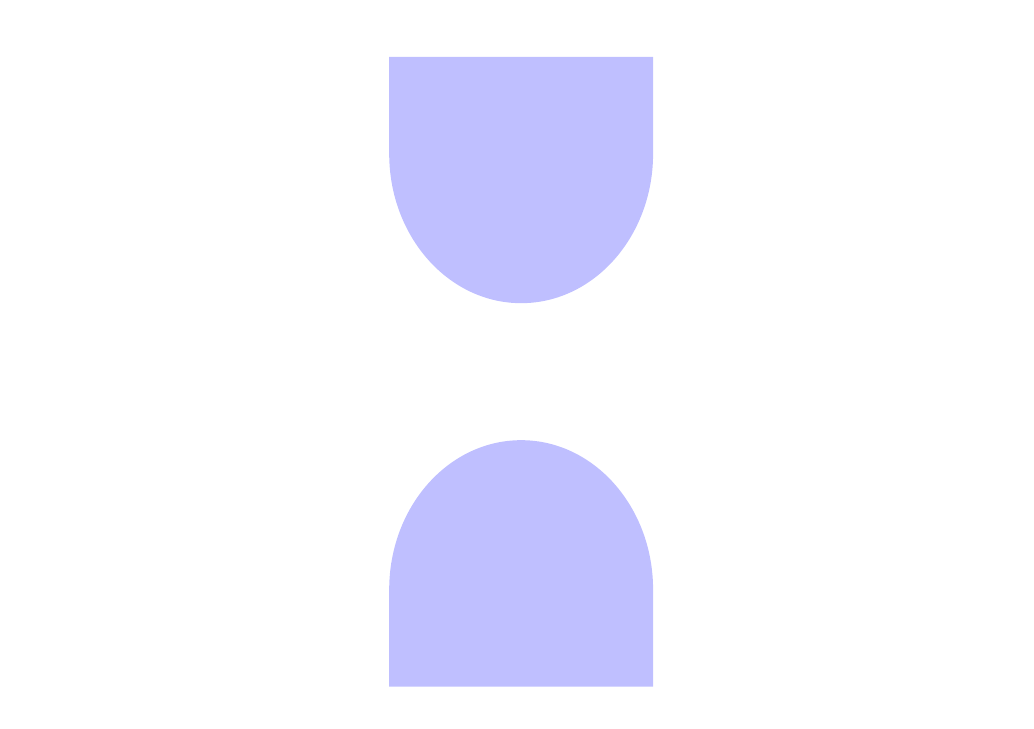}
\end{equation*}
since both maps are just the identities at $1$. This is the crucial step. We are left with:
\begin{equation*}
 \includegraphics[align=c,scale=0.3,keepaspectratio=true]{.//proof5.pdf}
 = 
 \includegraphics[align=c,scale=0.3,keepaspectratio=true]{.//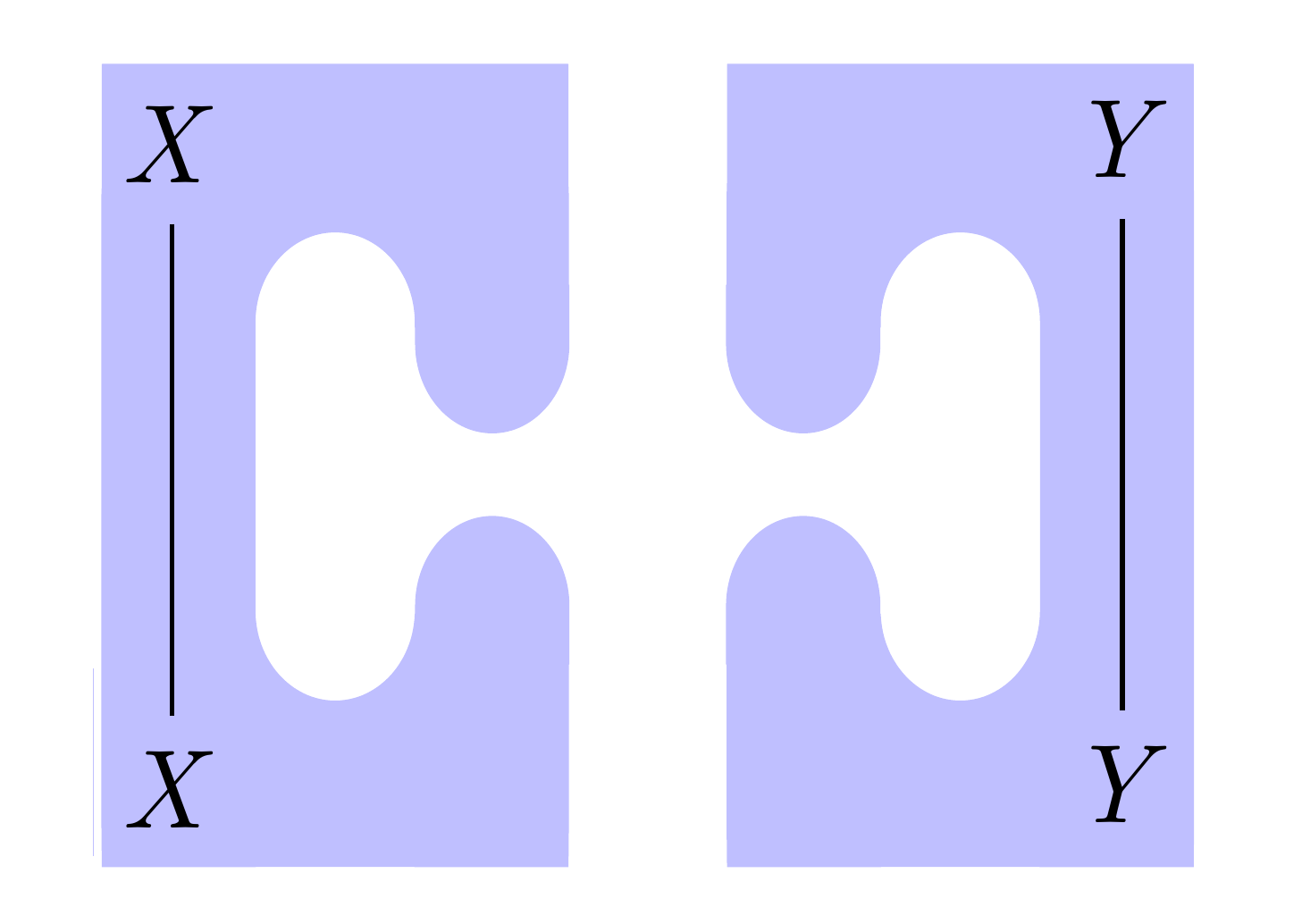}
\end{equation*}
which because of all the unit and counit conditions is equivalent to
\begin{equation*}
 \includegraphics[align=c,scale=0.35,keepaspectratio=true]{.//corr1.pdf}
 = 
 \includegraphics[align=c,scale=0.35,keepaspectratio=true]{.//corr2.pdf}
\end{equation*}
i.e. equation \eqref{corrforget}.
\end{proof}

\begin{proof}[Proof of Proposition \ref{probinterp}]
Consider the left side of \eqref{braidcond} and forget $X$ and $Z$ using the unique maps to $1$, and compose at the remaining $W\otimes Y$ with the left-hand side of \eqref{bubble}. We get:
\begin{equation*}
 \includegraphics[align=c,scale=0.35,keepaspectratio=true,clip=true,trim=0pt 300pt 0pt 0pt]{.//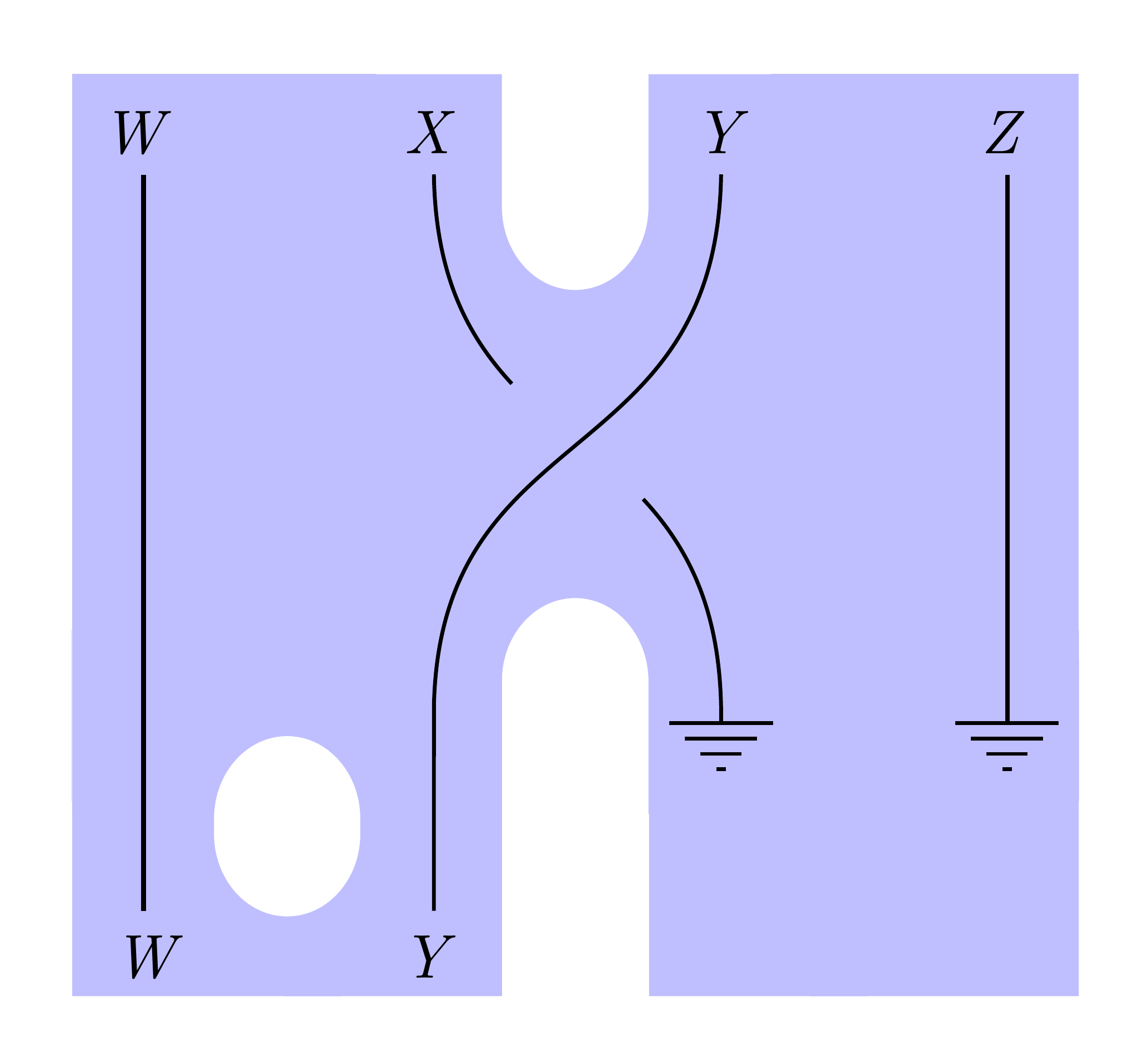}
\end{equation*}
Applying \eqref{braidcond} on the left and affinity of $P$ on the right we get:
\begin{equation*}
 \includegraphics[align=c,scale=0.35,keepaspectratio=true,clip=true,trim=0pt 300pt 0pt 0pt]{.//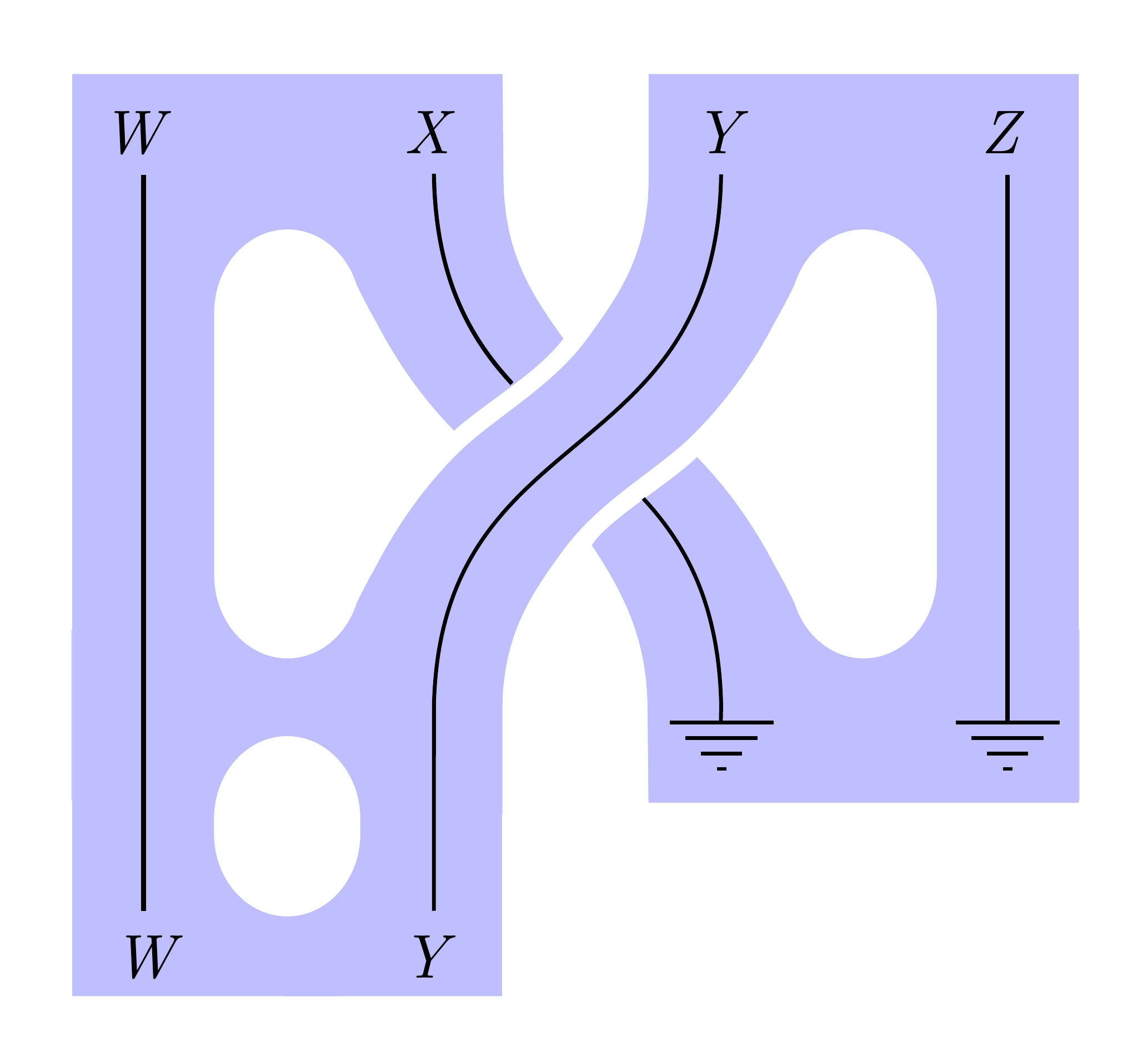}
\end{equation*}
and applying \eqref{corrforget} on the left we now get:
\begin{equation*}
 \includegraphics[align=c,scale=0.35,keepaspectratio=true,clip=true,trim=0pt 300pt 0pt 0pt]{.//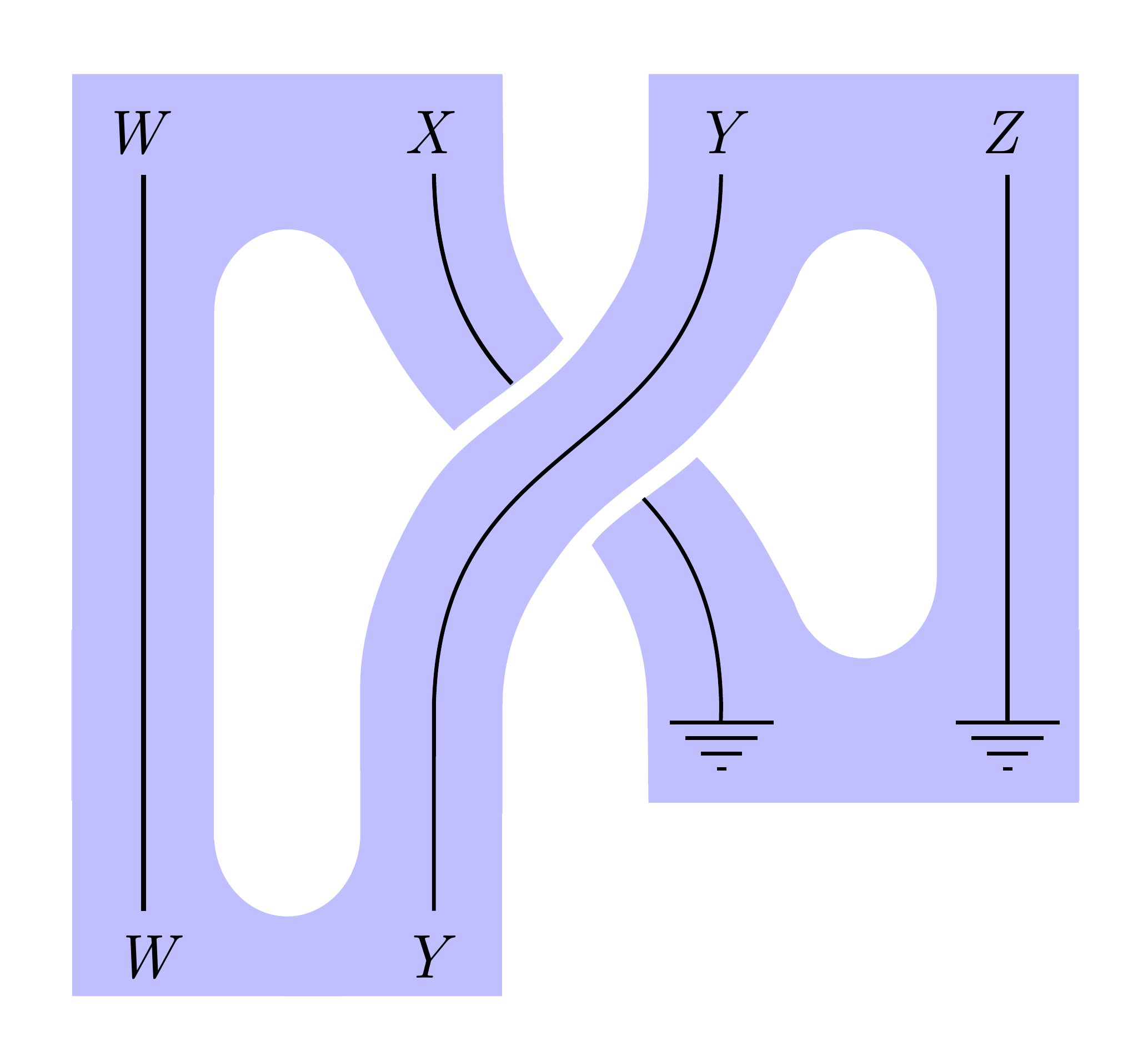}
\end{equation*}
which, before applying the ground wire maps, is the right-hand side of \eqref{braidcond}, which is therefore equal to its left-hand side. Hence by Definition \ref{defindep}, $W$ is independent of $Y$ for any law in the form given in the hypothesis. For $X$ and $Z$ we can proceed analogously.
\end{proof}

\subsection{Proof of equivalence of the notions of independence}\label{proofeq}

\begin{proof}[Proof of Proposition \ref{equivalence}]
 In $\cat{Prob(C)}$, $f_1$ and $f_2$ are independent in the sense of Franz with respect to the law $s : 1 \to PA$ if and only if there exists $h:A\to B_1\times B_2$ such that the following diagram commutes:
 \begin{equation}
 \begin{tikzcd}
  1 \ar[dotted,swap,bend right=50]{dd}{r_1} \ar[dotted,swap,pos=0.35]{ddr}{r_1\otimes_\nabla r_2} \ar[dotted]{dr}{s} \ar[dotted,bend left=40]{ddrr}{r_2} \\
  & A \ar[swap]{dl}{f_1} \ar{d}{h} \ar{dr}{f_2}\\
  B_1 & B_1\times B_2 \ar{l}{\pi_1} \ar[swap]{r}{\pi_2} & B_2
 \end{tikzcd}
\end{equation}
where $\pi_1$ and $\pi_2$ are the projections of $\cat{C}$, where the dotted arrows from $1$, with a slight abuse of notation, denote Kleisli morphisms ($s:1\to PA$, etcetera), and where $r_1$ and $r_2$ denote the resulting laws on $B_1$ and $B_2$.

Now suppose that such an $h$ exists. By the universal property of the product, it must necessarily be equal to $(f_1,f_2)$. Therefore $P(f_1,f_2) \circ s = r_1\otimes_\nabla r_2 = \nabla\circ (r_1\otimes r_2)$. Now using Proposition \ref{correlationloss},
\begin{align*}
 \nabla\circ\Delta\circ P(f_1,f_2) \circ s = \nabla\circ\Delta\circ \nabla\circ (r_1\otimes r_2) = \nabla\circ (r_1\otimes r_2) = P(f_1,f_2) \circ s ,
\end{align*}
so $B_1$ and $B_2$ are independent in the sense of Definition \ref{defindep}.

Conversely, suppose that $\nabla\circ\Delta\circ P(f_1,f_2) \circ s = P(f_1,f_2) \circ s$. Then we have
\[
	P(f_1,f_2) \circ s = \nabla\circ\Delta\circ P(f_1,f_2)\circ s = \nabla\circ(r_1,r_2) = r_1\otimes_\nabla r_2,
\]
as was to be shown.
\end{proof}

\subsection{Monoidal structure of the Kantorovich monad}\label{proofsmonoidal}

In order to prove Proposition \ref{nablasm}, first a useful result:
\begin{prop}\label{partintsm}
 Let $f:X\otimes Y\to\R$ be short. Let $p\in PX$. Then the function
 \begin{equation*}
 \left(  \int_X f(x,-)\,dp(x) \right) : Y\to \R
 \end{equation*}
 is short as well.
\end{prop}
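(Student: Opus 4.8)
The plan is to show that the function $g : Y \to \R$ defined by $g(y) := \int_X f(x,y)\,dp(x)$ is short, i.e.\ that it is well-defined (finite) and $1$-Lipschitz with respect to the metric on $Y$. First I would check well-definedness: for each fixed $y$, the map $x \mapsto f(x,y)$ is short (since $f$ is short and $d((x,y),(x',y)) = d(x,x')$ by the definition of the $\otimes$-metric on $X \otimes Y$), so it is a short map $X \to \R$, and hence integrable against $p$ because $p$ has finite first moment by definition of $PX$. Thus $g(y)$ is a finite real number for every $y$.

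The substance of the proof is the Lipschitz estimate. I would fix $y, y' \in Y$ and estimate directly:
\begin{equation*}
 |g(y) - g(y')| = \left| \int_X \big( f(x,y) - f(x,y') \big)\,dp(x) \right| \le \int_X \big| f(x,y) - f(x,y') \big|\,dp(x).
\end{equation*}
Since $f$ is short, for every $x$ we have $|f(x,y) - f(x,y')| \le d\big((x,y),(x,y')\big) = d(y,y')$, using again that the metric on $X \otimes Y$ is the sum $d(x,x') + d(y,y')$ and here the first coordinates agree. The bound $d(y,y')$ is a constant independent of $x$, so integrating against the probability measure $p$ (which has total mass $1$) yields
\begin{equation*}
 |g(y) - g(y')| \le \int_X d(y,y')\,dp(x) = d(y,y').
\end{equation*}
This is exactly the shortness of $g$, completing the proof.

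I do not expect a serious obstacle here, since the key identity $d\big((x,y),(x,y')\big) = d(y,y')$ is immediate from the additive form of the tensor metric, and the only analytic inputs are the triangle inequality for integrals and the finiteness of the first moment. The one point to handle carefully is the justification that $x \mapsto f(x,y)$ is genuinely short (not merely Lipschitz) so that integrability is guaranteed by the finite-first-moment hypothesis rather than requiring a separate argument; this follows because partial application of a short map in the $\otimes = +$ metric is again short, as noted above. A minor remark worth including is that the same computation shows $g$ is bounded on bounded sets and hence measurable, so the integral defining $g(y)$ is unambiguous.
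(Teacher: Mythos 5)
Your proof is correct and follows essentially the same route as the paper's: the identical estimate $|f(x,y)-f(x,y')| \le d\big((x,y),(x,y')\big) = d(y,y')$ integrated against the probability measure $p$. Your additional well-definedness check via the finite-first-moment hypothesis is a sound (and welcome) supplement the paper leaves implicit, though your closing aside is slightly off --- measurability of $g$ follows from its being $1$-Lipschitz (hence continuous), not from boundedness on bounded sets --- but this plays no role in the statement being proved.
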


\begin{proof}[Proof of Proposition \ref{partintsm}]
 First of all, $f:X\otimes Y \to \R$ being short means that for every $x,x'\in X, y,y'\in Y$:
\begin{equation*}
|f(x,y) - f(x',y')| \le d(x,x') + d(y,y').
\end{equation*}
Now:
\begin{align*}
&\left| \int_X f(x,y)\,dp(x) - \int_X f(x,y')\,dp(x) \right| \\
&= \left| \int_X \big( f(x,y)- f(x,y') \big) \,dp(x) \right| \\
&\le  \int_X \left| f(x,y)- f(x,y') \right| \,dp(x) \\
&\le \int_X  \big( d(x,x) + d(y,y') \big) \,dp(x) \\
&= \int_X d(y,y') \,dp(x) \\
&= d(y,y') . 
\end{align*}
\end{proof}

\begin{proof}[Proof of Proposition \ref{nablasm}]
 To prove that $\nabla$ it is short, let $p,p'\in PX, q,q'\in PY$. Then
\begin{align*}
&d\big( \nabla(p,q), \nabla(p',q') \big) \\
&= d\big( p\otimes_\nabla q, p'\otimes_\nabla q' \big) \\
&= \sup_{f: X\otimes Y\to \R} \int_{X\otimes Y} f(x,y)\, d(p\otimes_\nabla q - p'\otimes_\nabla q')(x,y) \\
&= \sup_{f: X\otimes Y\to \R} \int_{X\otimes Y} f(x,y)\, d\big(p\otimes_\nabla q - p'\otimes_\nabla q + p'\otimes_\nabla q - p'\otimes_\nabla q'\big)(x,y) \\
&= \sup_{f: X\otimes Y\to \R} \int_{X\otimes Y} f(x,y)\, d\big((p-p')\otimes q + p'\otimes_\nabla (q-q')\big)(x,y)\\
&= \sup_{f: X\otimes Y\to \R} \int_X \left\{ \int_Y f(x,y) \,dq(y) \right\} d(p-p')(x)  \\
&\qquad + \int_Y \left\{ \int_X f(x,y) \,dp'(x) \right\} d(q-q')(y) \\
&\le \sup_{g: X\to \R} \int_X g(x) d(p-p')(x) +  \sup_{h: Y\to \R} \int_Y h(y) d(q-q')(y) \\
&= d(p,p') + d(q,q')\\
&= d\big( (p,q), (p',q') \big),
\end{align*}
 where by replacing the partial integral of $f$ by $g$ we have used Proposition \ref{partintsm}. 
\end{proof}

\begin{proof}[Proof of Proposition \ref{nablanat}]
 By symmetry, it suffices to show naturality in $X$. Let $f:X\to Z$. We need to show that this diagram commutes:
\begin{equation*}\begin{tikzcd}
PX\otimes PY \ar{d}{f_*\otimes \id} \ar{r}{\nabla_{X,Y}} & P(X\otimes Y) \ar{d}{(f\otimes \id)_*} \\
PZ\otimes PY \ar{r}{\nabla_{Z,Y}} & P(Z\otimes Y)
\end{tikzcd}\end{equation*}
Now let $p\in PX,q\in PY$, and $g:Z\otimes Y\to \R$. Then
\begin{align*}
\int_{Z\otimes Y} f(z,y)\, d\big( (f\otimes \id)_* \nabla_{X,Y}(p,q) \big)(z,y) &= \int_{X\otimes Y} g(f(x),y)\,d(\nabla_{X,Y}(p,q)) (x,y)\\
&= \int_{X\otimes Y} g(f(x),y)\,dp(x)\,dq(y)\\
&= \int_{Z\otimes Y} g(z,y) \, d(f_*p)(z)\,dq(y) \\
&= \int_{Z\otimes Y} g(z,y) \, d\big( (f_*p)\otimes q \big)(z,y) \\
&= \int_{Z\otimes Y} g(z,y) \, d\big( \nabla_{Z,Y}\circ (f_*\otimes \id) (p,q) \big)(z,y) . 
\end{align*}
\end{proof}

\begin{proof}[Proof of Proposition \ref{lmf}]
 Since both maps are natural, we only need to check the coherence diagrams. Since the unitor is just the identity at the terminal object, the unit diagrams commute.
The associativity diagram at each $X,Y,Z$
\begin{equation*}
\begin{tikzcd}
PX \otimes PY \otimes PZ \ar{d}{\nabla_{X,Y}\otimes \id} \ar{r}{\id \otimes \nabla_{Y,Z}} & PX \otimes P(Y\otimes Z) \ar{d}{\nabla_{X,Y\otimes Z}} \\
P(X\otimes Y) \otimes PZ \ar{r}{\nabla_{X\otimes Y, Z}} & P(X\otimes Y\otimes Z)
\end{tikzcd}\end{equation*}
gives for $(p,q,r)\in PX \otimes PY \otimes PZ$ on one path
\begin{equation*}
(p,q,r) \mapsto (p\otimes_\nabla q, r) \mapsto (p\otimes_\nabla q) \otimes_\nabla r ,
\end{equation*}
and on the other path
\begin{equation*}
(p,q,r) \mapsto (p, q\otimes_\nabla r) \mapsto p\otimes_\nabla (q \otimes_\nabla r) . 
\end{equation*}
The product of probability distributions is now associative, as a simple calculation can show.

The symmetry condition is straightforward.
\end{proof}

\begin{proof}[Proof of Proposition \ref{mmonad}]
 We know that $(P,\id_1,\nabla)$ is a lax monoidal functor. We need to check now that $\delta$ and $E$ are monoidal natural transformations. Again we only need to show the commutativity with the multiplication, since the unitor is trivial.
For $\delta:\id_{\cat{CMet}}\Rightarrow P$ we need to check that this diagram commute for each $X,Y$:
\begin{equation*}
\begin{tikzcd}
X\otimes Y \ar[swap]{dr}{\delta} \ar{r}{\delta\otimes \delta} & PX \otimes PY \ar{d}{\nabla_{X,Y}} \\
& P(X\otimes Y)
\end{tikzcd}\end{equation*}
which means that for each $x\in X,y\in Y$ $\delta_x\otimes_\nabla\delta_y = \delta_{(x,y)}$, which is easy to check (the delta over the product is the product of the deltas).
For $E:PP\Rightarrow P$ we first need to find the multiplication map $\nabla^2_{X,Y}:PPX\otimes PPY\to PP(X\otimes Y)$ (the unit is just twice the deltas, and the unit diagram again trivially commutes). This map is given by
\begin{equation*}\begin{tikzcd}
P(PX) \otimes P(PY) \ar{r}{\nabla_{PX,PY}} & P(PX\otimes PY) \ar{r}{(\nabla_{X,Y})_*} & P(P(X\otimes Y))
\end{tikzcd}\end{equation*}
and more explicitly, if $\mu\in PPX, \nu\in PPY$, and $f:P(X\times Y)\to \R$,
\begin{align*}
\int_{P(X\otimes Y)} f(r)\,d\big( \nabla^2_{X,Y} (\mu,\nu) \big)(r) &= \int_{P(X\otimes Y)} f(r)\,d\big( (\nabla_{X,Y})_* \circ \nabla_{PX,PY} (\mu,\nu) \big)(r) \\
&= \int_{P(X\otimes Y)} f(r)\,d\big( (\nabla_{X,Y})_* (\mu\otimes_\nabla\nu) \big)(r) \\
&= \int_{PX\otimes PY} f(\nabla_{X,Y}(p,q))\,d(\mu\otimes_\nabla\nu)(p,q)\\
&=\int_{PX\otimes PY} f(p\otimes_\nabla q)\,d\mu(p)\,d\nu(q) .
\end{align*}
Now we have to check that this map makes this multiplication diagram commute:
\begin{equation*}
\begin{tikzcd}
PPX \otimes PPY \ar{d}{\nabla^2_{X,Y}} \ar{rr}{E_X\otimes E_Y} &&PX \otimes PY \ar{d}{\nabla_{X,Y}} \\
PP(X\otimes Y) \ar{rr}{E_{X\otimes Y}} && P(X\otimes Y)
\end{tikzcd}\end{equation*}
Now let $\mu\in PPX, \nu\in PPY$, and $g:X\times Y\to \R$. We have, using the formula for $\nabla^2$ found above,
\begin{align*}
&\int_{X\otimes Y} g(x,y)\, d\big( \nabla_{X,Y}\circ (E_X,E_Y)(\mu,\nu) \big)(x,y) =\\
&= \int_{X\otimes Y} g(x,y)\, d\big( \nabla_{X,Y} (E\mu,E\nu)\big)(x,y)\\
&= \int_{X\otimes Y} g(x,y)\, d\big( E\mu\otimes_\nabla E\nu\big)(x,y) \\
&= \int_{PX\otimes PY} \left\{ \int_{X\otimes Y} g(x,y)\,dp(x)\,dq(y) \right\} d\mu(p)\,d\nu(q) \\
&= \int_{PX\otimes PY} \left\{ \int_{X\otimes Y} g(x,y)\,d(p\otimes_\nabla q)(x,y) \right\} d\mu(p)\,d\nu(q) \\
&= \int_{P(X\times Y)} \left\{ \int_{X\otimes Y} g(x,y)\,dr(x,y) \right\} d(\nabla^2_{X,Y}(\mu,\nu))(r) \\
&= \int_{X\otimes Y} g(x,y)\, d\big( E_{X\otimes Y} \circ \nabla^2_{X,Y}(\mu,\nu) \big)(x,y) .
\end{align*}
Therefore the diagram commutes, and $(P,\delta,E)$ is a monoidal monad.
\end{proof}

\subsection{Opmonoidal structure of the Kantorovich monad}\label{proofsopmonoidal}

Just as in the case of joints, to prove the Proposition \ref{deltasm} we first prove the following useful result.

\begin{prop}\label{sumsm}
 Let $f:X\to\R$ and $g:Y\to\R$ be short. Then $(f+g):X\otimes Y\to\R$ given by $(x,y) \mapsto f(x) + g(y)$ is short.
\end{prop}

\begin{proof}[Proof of Proposition \ref{sumsm}]
 Let $x,x'\in X$ and $y,y\in Y$. Then
 \begin{align*}
  | f(x) + g(y) - f(x') - f(y') | &\le |f(x) - f(x')| + |g(y) - g(y')| \\
  &\le d(x,x') + d(y,y') = d\big( (x,y), (x',y') \big) .
 \end{align*}
\end{proof}

\begin{proof}[Proof of Proposition \ref{deltasm}]
To prove that $\Delta$ is short, let $p,q\in P(X\otimes Y)$, and denote $p_X,p_Y,q_X,q_Y$ their marginals. Then:
\begin{align*}
d &\big( \Delta(p) , \Delta(q) \big) = d \big( (p_X,p_Y) , (q_X,q_Y) \big) = d (p_X,q_X) + d (p_Y,q_Y) \\
&= \sup_{f:X\to\R} \int_X f(x) \, d(p_X-q_X)(x) + \sup_{g:Y\to\R} \int_Y g(y) \, d(p_Y-q_Y)(y) \\
&= \sup_{f:X\to\R} \int_{X\otimes Y} f(x) \, d(p-q)(x,y) + \sup_{g:Y\to\R} \int_{X\otimes Y} g(y) \, d(p-q)(x,y)  \\
&= \sup_{f:X\to\R} \sup_{g:Y\to\R} \int_{X\otimes Y} \bigg( f(x) + g(y) \bigg) \, d(p-q)(x,y) \\
&\le \sup_{h:X\otimes Y\to\R} h(x,y)\, d(p-q)(x,y)\\
&= d_{P(X\otimes Y)} (p,q) ,
\end{align*}
where by replacing $f+g$ with $h$ we have used Proposition \ref{sumsm}. 
\end{proof}

\begin{proof}[Proof of Proposition \ref{deltanat}]
 By symmetry, it suffices to show naturality in $X$. Let $f:X\to Z$. We need to show that this diagram commutes:
\begin{equation*}\begin{tikzcd}
P(X\otimes Y) \ar{d}{(f\otimes \id)_*} \ar{r}{\Delta_{X,Y}} & PX\otimes PY \ar{d}{f_*\otimes \id}   \\
P(Z\otimes Y) \ar{r}{\Delta_{Z,Y}} & PZ\otimes PY
\end{tikzcd}\end{equation*}
Let now $p\in P(X\otimes Y)$. We have to prove that:
\begin{equation*}
\Delta_{Z,Y} \circ (f\otimes \id)_* p = (f_*\otimes \id) \circ \Delta_{X,Y} (p) .
\end{equation*}
On one hand:
\begin{align*}
(f_*\otimes \id) \circ \Delta_{X,Y} (p) &= (f_*\otimes \id)(p_X,p_Y) \\
&= (f_*p_X,p_Y) .
\end{align*}
On the other hand, let $h:Z\to \R$ and $g:Y\to \R$ be short. Then:
\begin{align*}
\int_Z h(z) \, d(((f\otimes \id)_* p)_Z)(z) &= \int_{Z\otimes Y} h(z) \, d((f\otimes \id)_* p)(z,y) \\
&= \int_{X\otimes Y} h(f(x)) \, dp(x,y) \\
&= \int_{X} h(f(x)) \, dp_X(x) \\
&= \int_{Z} h(z) \, d(f_*p_X)(x)  ,
\end{align*}
and:
\begin{align*}
\int_Y g(y) \, d(((f\otimes id)_* p)_Y)(y) &= \int_{Z\otimes Y} g(y) \, d((f\otimes id)_* p)(z,y) \\
&= \int_{X\otimes Y} g(y) \, dp(x,y) \\
&= \int_{Y} g(y) \, dp_Y(y) ,
\end{align*}
so the two components are again $(f_*p_X,p_Y)$. 
\end{proof}

\begin{proof}[Proof of Proposition \ref{olmf}]
 We already have naturality of the maps, and the counitor is trivial, we just have to check coassociativity. Namely, that the following diagrams commutes for each $X,Y,Z$:
\begin{equation*}
\begin{tikzcd}
P(X\otimes Y \otimes Z) \ar[swap]{d}{\Delta_{X,Y\otimes Z}} \ar{rr}{\Delta_{X\otimes Y, Z}} && P(X\otimes Y) \otimes P(Z) \ar{d}{\Delta_{X\otimes Y} \otimes \id} \\
P(X) \otimes P(Y\otimes Z) \ar[swap]{rr}{\id\otimes \Delta_{Y\otimes Z}} && P(X) \otimes P(Y) \otimes P(Z)
\end{tikzcd}\end{equation*}
Now given $p\in P(X\otimes Y \otimes Z)$, we get:
\begin{equation*}
(\Delta_{X\otimes Y} \otimes \id) \circ \Delta_{X\otimes Y, Z} (p) = (\Delta_{X\otimes Y} \otimes \id) (p_{XY},p_Z) = (p_X,p_Y,p_Z) ,
\end{equation*}
and:
\begin{equation*}
(\id\otimes \Delta_{Y\otimes Z}) \circ \Delta_{X,Y\otimes Z} (p) = (\id\otimes \Delta_{Y\otimes Z}) (p_{X},p_{YZ}) = (p_X,p_Y,p_Z) ,
\end{equation*}
since there is only one way of forming marginals.

The symmetry condition is again straightforward.
\end{proof}

\begin{proof}[Proof of Proposition \ref{ommonad}]
 We know that $(P,\id_1,\Delta)$ is an oplax monoidal functor. We need to check now that $\delta$ and $E$ are comonoidal natural transformations. Again we only need to show the commutativity with the comultiplication, since the counitor is trivial.
For $\delta:\id_{\cat{CMet}}\Rightarrow P$ we need to check that this diagram commute for each $X,Y$:
\begin{equation*}
\begin{tikzcd}
X\otimes Y \ar[swap]{dr}{\delta\otimes \delta} \ar{r}{\delta} & P(X\otimes Y) \ar{d}{\Delta_{X,Y}} \\
& PX \otimes PY
\end{tikzcd}\end{equation*}
which means that for each $x\in X,y\in Y$, $(\delta_{(x,y)})_X=\delta_x$ and $(\delta_{(x,y)})_Y=\delta_y$, which is again easy to check (the marginals of a delta are the deltas at the projections).
For $E:PP\Rightarrow P$ we first need to find the comultiplication map $\Delta^2_{X,Y}:PP(X\otimes Y)\to PPX\otimes PPY$ (the unit is just twice the deltas, and the unit diagram again trivially commutes). This map is given by:
\begin{equation*}\begin{tikzcd}
P(P(X\otimes Y)) \ar{r}{(\Delta_{XY})_*} & P(PX\otimes PY) \ar{r}{\Delta_{PX,PY}} & P(PX) \otimes P(PY)
\end{tikzcd}\end{equation*}
and more explicitly, if $\mu\in P(P(X\otimes Y))$, and $f:PX\to\R$ and $g:PY\to\R$ are short:
\begin{align*}
\int_{PX} f(p)\, d\big( ((\Delta_{XY})_*\mu)_{PX} \big) (p) &= \int_{PX\otimes PY} f(p)\, d\big( ((\Delta_{XY})_*\mu)_{PX} \big) (p,q)\\
&= \int_{P(X\otimes Y)} f(r_X)\, d\mu(r)
\end{align*}
since $g$ only depends on $PX$, and analogously:
\begin{align*}
\int_{PY} g(q)\, d\big( ((\Delta_{XY})_*\mu)_{PY} \big) (q) &= \int_{P(X\otimes Y)} f(r_Y)\, d\mu(r) .
\end{align*}
We have to check that this map makes this multiplication diagram commute:
\begin{equation*}
\begin{tikzcd}
PP(X\otimes Y)  \ar{d}{\Delta^2_{X,Y}} \ar{rr}{E_{X\otimes Y}} && P(X\otimes Y) \ar{d}{\Delta_{X,Y}} \\
PPX \otimes PPY \ar{rr}{E_X\otimes E_Y} && PX \otimes PY
\end{tikzcd}\end{equation*}
Now let $\mu\in P(P(X\otimes Y))$, and $f:X\to \R$ and $g:Y\to\R$ short. We have, using the formula for $\Delta^2$ found above:
\begin{align*}
\int_X f(x)\, d\left( (E_{X\otimes Y} \mu )_X \right) (x) &= \int_{X\otimes Y} f(x)\, d\left( E_{X\otimes Y} \mu \right) (x,y) \\
&= \int_{P(X\otimes Y)} \left\{ \int_{X\otimes Y} f(x) \, dr(x,y) \right\} d\mu(r) \\
&= \int_{P(X\otimes Y)} \left\{ \int_{X} f(x) \, d(r_X)(x) \right\}\, d \mu  (r) \\
&= \int_{PX\otimes PY} \left\{ \int_{X} f(x) \, dp(x) \right\}\, d\big( (\Delta_{XY})_*\mu \big) (p,q) \\
&= \int_{PX} \left\{ \int_{X} f(x) \, dp(x) \right\}\, d\big( ((\Delta_{XY})_*\mu)_{PX} \big) (p) \\
&= \int_{X} f(x) \, d\big( E_X ((\Delta_{XY})_*\mu)_{PX} \big)(x),
\end{align*}
and analogously:
\begin{align*}
\int_Y g(y)\, d\left( (E_{X\otimes Y} \mu )_Y \right) (y) &= \int_{Y} f(y) \, d\big( E_Y ((\Delta_{XY})_*\mu)_{PY} \big)(y),
\end{align*}
which means:
\begin{align*}
\Delta_{X,Y} \circ E_{X\otimes Y} \mu &= (E_X\otimes E_Y) \circ \Delta_{PX,PY} (\Delta_{XY})_*\mu) \\
&= (E_X\otimes E_Y) \circ (\Delta_{PX,PY} \circ (\Delta_{XY})_*) \mu \\
&= (E_X\otimes E_Y) \circ \Delta^2_{X,Y} \mu .
\end{align*}
Therefore the diagram commutes, and $(P,\delta,E)$ is an opmonoidal monad.
\end{proof}

\subsection{Bimonoidal structure of the Kantorovich monad}\label{proofbimonoidal}

\begin{proof}[Proof of Proposition \ref{blmf}]
 We already know that $P$ is lax and oplax. We only need to check the compatibility diagrams between the two structures. The unit diagrams are trivial, because the unitors are trivial. The bimonoidality diagram:
\begin{equation*}
\begin{tikzcd}
& P(W\otimes X) \otimes P(Y\otimes Z) \ar[swap]{dl}{\nabla_{W\otimes X, Y\otimes Z}} \ar{dr}{\Delta_{W,X}\otimes \Delta_{Y,Z}} \\
P(W\otimes X \otimes Y \otimes Z) \ar[swap]{d}{\cong} & & P(W) \otimes P(X) \otimes P(Y) \otimes P(Z) \ar{d}{\cong} \\
P(W\otimes Y \otimes X \otimes Z) \ar[swap]{dr}{\Delta_{W\otimes Y,X\otimes Z}} & & P(W) \otimes P(Y) \otimes P(X) \otimes P(Z) \ar{dl}{\nabla_{W,Y}\otimes \nabla_{X,Z}} \\
& P(W\otimes Y) \otimes P(X\otimes Z)
\end{tikzcd}
\end{equation*}
says that given $p\in P(W\otimes X),q\in P(Y\otimes Z)$:
\begin{equation*}
\Delta_{W\otimes Y,X\otimes Z} \circ \nabla_{W\otimes X, Y\otimes Z} (p,q) = (\nabla_{W,Y}\otimes \nabla_{X,Z}) \circ (\Delta_{W,X}\otimes \Delta_{Y,Z}) (p,q)
\end{equation*}
Now on one hand:
\begin{align*}
(\nabla_{W,Y}\otimes \nabla_{X,Z}) \circ (\Delta_{W,X}\otimes \Delta_{Y,Z}) (p,q) &= (\nabla_{W,Y}\otimes \nabla_{X,Z}) (p_W,p_X,q_Y,q_Z) \\
&= (p_W\otimes_\nabla q_Y, p_X\otimes_\nabla q_Z) .
\end{align*}
On the other hand:
\begin{align*}
\Delta_{W\otimes Y,X\otimes Z} \circ \nabla_{W\otimes X, Y\otimes Z} (p,q) &= \Delta_{W\otimes Y,X\otimes Z} (p \otimes_\nabla q) .
\end{align*}
The marginal of $p \otimes q$ on $W\otimes Y$ is, by Fubini's theorem, let $f:W\otimes Y\to\R$:
\begin{align*}
\int_{W\otimes Y} f(w,y) \,d((p\otimes_\nabla q)_{WY})(w,y) &= \int_{W\otimes X\otimes Y\otimes Z} f(w,y) \,d(p\otimes_\nabla q)(w,x,y,z) \\
&= \int_{W\otimes X\otimes Y\otimes Z} f(w,y) \,dp(w,x)\,dq(y,z) \\
&= \int_{W\otimes Y} f(w,y) \,dp_W(w)\,dq_Y(y) \\
&= \int_{W\otimes Y} f(w,y) \,d(p_W\otimes_\nabla q_Y)(w,y) ,
\end{align*}
and similarly the marginal on $X\otimes Z$ is given by $p_X\otimes_\nabla q_Z$. In other words, if the pairs are independent, the components from different pairs are also independent.
It follows that $P$ is bilax monoidal.
\end{proof}

\section*{Acknowledgements}
 The authors would like to thank the anonymous reviewers for the very helpful comments.
\addcontentsline{toc}{section}{Acknowledgements}

\bibliographystyle{alpha}
\bibliography{catprob}
\addcontentsline{toc}{section}{\bibname}

\end{document}